\documentclass[leqno,11pt,letterpaper, english]{amsart}
\usepackage[usenames,dvipsnames]{color}
\usepackage[colorlinks=true,linkcolor=Red,citecolor=Green]{hyperref}
\usepackage{amsmath,amssymb,amsthm,graphicx,url,bbm}
\usepackage{epstopdf}
\usepackage{color}
\usepackage{enumerate}
\usepackage{dsfont}
\usepackage{mathtools}
\usepackage[T1]{fontenc}
\usepackage{tikz}
\usepackage[
	backend=biber,
	citestyle=alphabetic,]{biblatex}
\newcommand{\xqedhere}[1]{%
    \rlap{%
         \hbox to#1{%
           \hfil
           \llap{%
               \ensuremath{\square}
           }%
       }%
   }%
}

\def\pasdegrille{\let\grille = \pasgrille}

\def\aat#1#2#3{
\divide \dimen1 by 48 \dimen3=\dimen1 \multiply \dimen1 by #1
\advance \dimen1 by -\dimen3 \divide \dimen1 by 101 \multiply
\dimen1 by 100 \divide \dimen2 by \count11 \multiply \dimen2 by #2
\setbox0=\hbox{#3}\ht0=0pt\dp0=0pt
  \rlap{\kern\dimen1 \vbox to0pt{\kern-\dimen2\box0\vss}}\dimen1= \wd1
\dimen2=\ht1}
\def\pasgrille{
\count12= \dimen1 \divide \count12 by 50 \divide \dimen2 by
\count12 \count11 =\dimen2 \ \divide \dimen1 by 48
\setlength{\unitlength}{\dimen1} \smash{\rlap{\ }} \dimen1= \wd1
\dimen2=\ht1 }
\def\grille{
\count12= \dimen1 \divide \count12 by 50 \divide \dimen2 by
\count12 \count11 =\dimen2 \ \divide \dimen1 by 48
\setlength{\unitlength}{\dimen1}
\smash{\rlap{\graphpaper[1](0,0)(50, \count11)}} \dimen1= \wd1
\dimen2=\ht1 }
\pasdegrille
\newcommand{\be}{\begin{equation}}
\newcommand{\ee}{\end{equation}}
\newcommand{\ra}{\rangle}
\newcommand{\la}{\langle}

\newcommand{\D}{{\mathcal D}}

\newcommand{\HH}{{\mathcal H}}

\newcommand{\cM}{{\mathcal M}}

\newcommand{\Oo}{{\mathcal O}}

\newcommand{\UU}{{\mathcal U}}

\newcommand{\CC}{{\mathcal C}}
\newcommand{\Aa}{{\mathcal A}}
\newcommand{\Pp}{{\mathcal P}}
\newcommand{\Ll}{{\mathcal L}}

\newcommand{\Gg}{{\mathcal G}}
\newcommand{\leeq}{\lesssim}

\newcommand{\tr}{\operatorname{tr}}

\theoremstyle{plain}

\newtheorem{thm}{Theorem}

\newtheorem{prop}{Proposition}[section]
\newtheorem{cor}[prop]{Corollary}
\newtheorem{lem}[prop]{Lemma}

\theoremstyle{definition}

\newtheorem{rem}[prop]{Remark}
\newtheorem{defn}[prop]{Definition} 

\numberwithin{equation}{section}

\def\squarebox#1{\hbox to #1{\hfill\vbox to #1{\vfill}}}

\usepackage{amsxtra}

\ifx\pdfoutput\undefined
  \DeclareGraphicsExtensions{.pstex, .eps}
\else
  \ifx\pdfoutput\relax
    \DeclareGraphicsExtensions{.pstex, .eps}
  \else
    \ifnum\pdfoutput>0
      \DeclareGraphicsExtensions{.pdf}
    \else
      \DeclareGraphicsExtensions{.pstex, .eps}
    \fi
  \fi
\fi

\title{2D Torus defocusing NLSE with a potential and random initial data}

\author[N. Lindstrom]{Nicolas Lindstrom}
\address{Universit{\'e} Paris-Saclay, Math{\'e}matiques, UMR 8628 du CNRS, B{\^a}t 307, 91405  Orsay Cedex, France,   and Institut Universitaire de France}
\email{nicolas.lindstrom@universite-paris-saclay.fr}

\usepackage{amssymb}
\usepackage{amsmath, amsthm, amsopn, amsfonts}

\def\11{{\rm 1~\hspace{-1.4ex}l} }
\def\R{\mathbb R}

\def\Z{\mathbb Z}
\def\N{\mathbb N}
\def\E{\mathbb E}
\def\T{\mathbb T}

\begin{document}    

\begin{abstract}
We consider the non-linear cubic defocusing Schrödinger equation on the 2D Torus with a potential $V(x)\in H^{2^+}(\T^2)$. The initial data, $u_0$,is given by a centered gaussian  variable with covariance $(-\Delta+V)^{-1}$,  and thus is almost surely in $H^{0^-}(\T^2)$. We show that it is almost surely well-possed for all time in the weak sense, and that $u-e^{it(\Delta -V)}u_0$ is in $H^{s}(\T)^2$ for some $s>0$, and that the solution is the limit of the solutions to the truncated equation. This result is a generalization of the results first given in \cite{2D_Defocusing}.
 \begin{center} \rule{0.6\textwidth}{.4pt}
\end{center}
\end{abstract}   

\ \vskip -1cm \noindent\hfil\rule{0.9\textwidth}{.4pt}\hfil \vskip 1cm 
 \maketitle
\section{Introduction}
\subsection{Context and Motivation}
Our motivation is to show that the results proved by Bourgain in \cite{2D_Defocusing} on the non linear cubic Schrödinger equation on the 2D torus are stable by perturbation by a potential. It will prove more delicate than one may assume a priori as the potential derails the local theory estimates on which the prove relies. This paper is in the spirit of previous works aiming to generalize the methods used on the cubic NLS such as \cite{fan20192ddefocusingnonlinearschrodingerequation} where the torus is taken irrational, and the main focus is in adapting the lattice counting arguments to the irrational case; \cite{tzvetkov2008invariantmeasuresdefocusingnls} where the problem is treated on the disc; as well as in a more general sense the general work on the sphere and Zoll manifolds in \cite{previos_burq_gerard,burq2025probabilisticwellposedenessnonlinearschrodinger}. For a fairly extensive list of such works see \cite{deng2012invariancegibbsmeasurebenjaminono}. Working with probabilistic initial data allows for an extension argument through an invariant measure aswell as for lower regularity of initial data. The initial data is taken almost surely in $H^{0^-}(\T^2)$ and the solution comes in the form of a regular ($H^s(\T^2), s>0$) perturbation of the solution to the linear Schrodinger equation. The outline of the argument is the same as for the potentialess equation, yet the introduction of the potential creates a great hurdle when trying to couple the local and global theory. In overcoming this we are able to gain insights on the essential factors that make Bourgain's proof, which a priori seems to strongly rely on the properties of the point-wise product of the exponential, a tool lost when approaching the equation with a potential. 
The problem of the NLSE with a potential 
\begin{equation}\label{eq:NLSE_2}
(i\partial_t  - \Delta  + V)u=- u|u|^2
\end{equation}
arises naturally as the first non trivial perturbation of the NLS on the $2D$ torus. Firstly we will develop a local Cauchy theory through a fixed point argument, this will prove tricky as our initial data is in $H^{0^-}(\T^2)$ and previous proofs of the regularizing character of the non linearity heavily rely on the particularities of $-\Delta$. Once the local theory is established we will extend the result through measure invariance and a bootstrap on the space of "good" initial data. As far as the deterministic terms are concerned the introduction of the potential does not play an important factor, although it does require some work to ensure that previous estimates still apply. It is in the probabilistic elements where the sturdiness of the previous methods comes under stress.
\subsection{Problem area and main result}
The first question that arises when introducing the potential is where it fits in the previously developed theory. This question is at the center of this paper and it boils down to wether we see the potential as contributing to the non-linearity or to the linear part of the equation. The distinction between the linear and non-linear parts of the equation are better seen when the equation is written under its Duhamel form. The first approach, taking $\phi$ as be the initial data, is to write (\ref{eq:NLSE_2}) as:
\begin{equation}\label{eq:Duhamel_Malo}
 u = e^{it(-\Delta)}\phi - i \int_{0}^t e^{i(t-s)(-\Delta)}(u|u|^2+Vu)ds.
\end{equation}
This approach seems a priori the easiest, as we take the potential as contributing to the nonlinearity and thus allows for a direct one to one with the previously established arguments in \cite{Bourgain1993,fan20192ddefocusingnonlinearschrodingerequation}. On the other hand a much more natural approach to the problem is to see $-\Delta+V$ as a linear operator replaccing the role of $-\Delta$. This translates into
\begin{equation}\label{eq:Duhamel_Bueno}
u = e^{it(-\Delta+V)}\phi - i \int_{0}^t e^{i(t-s)(-\Delta+V)}u|u|^2ds,
\end{equation}
but brings the disadvantage that we don't have a good description of the semigroup $e^{it(-\Delta+V)}$, it is this last approach that we will take. In order to see why we choose the second option we first have to talk about the influence of the potential on the initial data to be taken. The family of initial data that interests us both for physical and measure theory arguments are those following a normal distribution with covariance given by the resolvent of a self-adjoint operator. We can write explicitly these random variables. If we let $A^{-1}>0$ be a bounded, compact, self-adjoint operator on a Hilbert space $\HH$ (in our case $L^2(\T^2)$). Letting $\{e_n\}_{n\in\N}$ and $\{\lambda_n\}_{n\in\N}$ be an orthonormal basis of eigenfunctions and its corresponding eigenvalues, $\{g_n^w\}_{n\in\N}$ be independent and identically distributed complex gaussian variables centered in $0$ and of variance $1$, the following random variable 
    \begin{equation*}
        \Gg_{A}:\omega \longmapsto \sum_n \frac{g_n^\omega}{(\lambda_n)^{1/2}}e_n
    \end{equation*}
defines a normal distribution with covariance given by $A^{-1}$. The two natural candidates for $A$ in our case are $-\Delta$ and $-\Delta+V$, the choice once again depends on the role of the potential in the equation. Either way $\Gg_{A}(\omega)\in H^{0^-}$ almost surely. Our objective is to show that the integral term in the Duhamel form is regular, meaning that the solution will be the sum of a linear term, $e^{itA}\Gg_{A}(\omega)$,  and a regular term, $U$. This approach forces us to choose (\ref{eq:Duhamel_Bueno}) as we have that $V\Gg_{A}(\omega)\notin L^2(\T)$ almost surely. 

This consideration is also why we say that a potential (seen as an operator of order $0$) is the first non trivial alteration that we can do. For example in dimension $1$ the initial data is almost surely in $L^2(\T)$ and thus the (\ref{eq:Duhamel_Malo}) approach suffices, on the other hand taking a pseudo-differential operator of degree $\sigma<0$ would also pose a relatively trivial deviation from the proof in \cite{2D_Defocusing}. On the other hand, because of probabilistic considerations further discussed in section \ref{sect:measure_5} we take the Wick ordered nonlinearity $:u|u|^2:=u|u|^2 -2u\E \int|u|^2$, for a good definition and construction of the Wick ordering see \cite{Janson_1997}.

Before we introduce the main result let us briefly expose the hurdles introduced by the change $-\Delta\to-\Delta+V$ and how we will overcome them. The method we use to show that the integral term is regular is to develop the Wick ordered nonlinearity $\colon (e^{itA}\Gg_{A}(\omega)+U)|e^{itA}\Gg_{A}(\omega)+U|^2\colon$ into the sum of trilinear terms and then use the resonant interactions of the terms to bound the norms. In the case without a potential we are able to utilize the resonances thanks to the explicit interactions of the space fourier and time fourier, in the sense that 
\[e^{it(-\Delta)}e^{in\cdot x}e^{it(-\Delta)}e^{im\cdot x}=e^{i(|n|^2t+n\cdot x)}e^{i(|m|^2t+m\cdot x)}=e^{i\big((|n|^2+|m|^2)t+(n+m)\cdot x\big)}.\] 
This is used by Bourgain and others to bound the terms that contribute to the Sobolev norm\footnote{Actually to the Bourgain space norm, acting as surrogate for the Sobolev one.} by the size of lattice sets of the form:
\[\left\{(n_1,n_2,n_3)\in (\Z^2)^3 \ | \ \substack{n_2\neq n_1 \\ \ n_2\neq n_3 }, \substack{|n_i|\sim N_i \\ i=1,2,3}; \  |n_1|^2-|n_2|^2+|n_3|^2-|n_1-n_2+n_3|^2 =0 \right\}.\]

While these estimates are not important when it comes to the deterministic estimates as we can rely on Strichartz type estimates, they do play a crucial role in the bounding of the linear terms. This is why we introduce a pseudo differential operator $-\Delta+V^r$ whose eigenfunctions satisfy $e_ne_m\sim e_{n+m}$ in a sense to be explained later. This change makes it so we can construct the LWP argument for a random variable $\Gg_{-\Delta+V^r}(\omega)$, but in order to arrive at our intended result we will have to show that $\Gg_{-\Delta+V^r}(\omega)\sim\Gg_{-\Delta+V}(\omega)$ which will prove delicate. In order to extend our result we will need an almost surely finite Hamiltonian requiring us to take the the Wick ordered Hamiltonian. The Wick ordering of $\Gg_{-\Delta+V^r}(\omega)|\Gg_{-\Delta+V^r}(\omega)|^2$ grants a term that is equal to $0$ in the case of $-\Delta$ and is to the extent of my knowledge first treated here. It is this very term that is responsible for the solution in the sphere not being an alteration on the linear solution as seen in \cite{Burq_2025} and gives an understanding on some of the geometric considerations that allow for the well-possedness on the torus and not on the sphere.  We may now present the main result.
\begin{thm}\label{thm:thm1}
There is a $s>0$ and $\alpha>0$ such that for $V\in H^{2^+}(\T^2)$ and $\forall \kappa>0$ and $T\in \mathbb{R^+}$ the following Cauchy problem, 
\begin{equation}
    \begin{cases}
        (i\partial_t-\Delta+V)u = :u|u|^2:
        \\
        u(0,x)=\Gg_{-\Delta+V}(\omega)=\sum_{n} \frac{g_n^\omega}{(\lambda_n)^{1/2}}e_n(x).
    \end{cases}
\end{equation}
well posed, for $\omega$ outside of a set with measure smaller than $\kappa$. Meaning that we have unity and existence of a solution in the distributional sense. On top of that we have that letting $u^N$ be the solution of the truncated version (write $P_N:=\Pi_{-\Delta\le N^2}$, $u^N=P_N(u)$)
\begin{equation}
\begin{cases}
\Big(i\partial_t -\Delta  +V\Big)u^N =P_N(u^N|u^N|^2-2a_Nu^N)\\ 
u(0,x)=P_N(\Gg_{-\Delta+V}(\omega))(x)
\end{cases} 
\end{equation}
then
\begin{equation*}
U^N=u^N(t)-e^{2ic_N(\omega)}P_N\big(\Gg_{-\Delta+V}(\omega)\big)
\end{equation*}
converges in $L_{loc}^\infty H^s(\T^2)$ to
\begin{equation*}
U=u(t)-e^{2ic_\infty(\omega)}\sum_{n\in \mathbb{Z}^2} \frac{g_n(\omega)}{ (\lambda_n)^{1/2}}e^{i\lambda_n t}e_{n}(x)
\end{equation*}
at speed:
\begin{equation*}
    \Big\|U^N-U\Big\|_{L_{loc}^\infty H^s(\T^2)}\leq C(V,\kappa)N^{-\alpha},
\end{equation*}
with $a_N:=\E(\int |u^N|^2)$ and $c_N(\omega)=\int |u^N|^2-\E(\int |u^N|^2)$ and $\lim_{N\to\infty}c_N(\omega)=c_\infty(\omega)$ finite almost surely. 
\end{thm}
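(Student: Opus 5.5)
We follow Bourgain's scheme from \cite{2D_Defocusing}, organised around the auxiliary operator $-\Delta+V^r$ described in the introduction.

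\textbf{Step 1: remove the Wick phase.} First we note that the truncated flow conserves $\int|u^N|^2$. Hence the term $-2a_Nu^N$ combined with $|u^N|^2$ yields a constant-in-time phase. We conjugate by $e^{2ic_N(\omega)t}$ (the factor written as $e^{2ic_N}$ in the statement). This reduces the problem to the equation whose nonlinearity is $P_N\big(u|u|^2-2\|u\|_{L^2}^2u\big)$.

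\textbf{Step 2: reduce to the operator $-\Delta+V^r$.} We write $u^N=e^{it(-\Delta+V^r)}P_N\Gg_{-\Delta+V^r}(\omega)+U^N$. We treat $V-V^r$, which is smoothing since $V\in H^{2^+}$, as a perturbation. We also prove that $\Gg_{-\Delta+V}$ and $\Gg_{-\Delta+V^r}$ are equivalent, in the sense that their difference lies almost surely in $H^{s}$. For this we expect a Hilbert--Schmidt comparison of the covariances $(-\Delta+V)^{-1}-(-\Delta+V^r)^{-1}$, composed with $(-\Delta)^{s}$.

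\textbf{Step 3: local theory for $U^N$ by a fixed point.} We set up a fixed point for $U^N$ in a Bourgain-type space $X^{s,b}$ adapted to $-\Delta+V^r$, on a time interval of length $\delta\sim (\log 1/\kappa)^{-C}$. We expand the Wick-ordered cubic term into trilinear pieces according to how many factors are linear and how many are $U$.
\begin{itemize}
\item Pieces with at least two $U$ factors are handled deterministically, by Strichartz/bilinear estimates on $\T^2$. These transfer to $-\Delta+V^r$ with a loss controlled by $\|V\|_{H^{2^+}}$, using a commutator/Duhamel comparison.
\item The purely linear piece and the pieces with one $U$ are estimated in $L^p(\Omega)$ using hypercontractivity for Gaussian chaos. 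Thanks to the property $e_ne_m\sim e_{n+m}$ of the eigenfunctions of $-\Delta+V^r$, the resulting sums reduce to the lattice counting over the resonant set $|n_1|^2-|n_2|^2+|n_3|^2-|n_1-n_2+n_3|^2=O(1)$. This gives a gain of $N^{-\alpha}$ for dyadic blocks.
\item The pairings $n_2=n_1$ or $n_2=n_3$ are cancelled by the Wick ordering. The new residual term, nonzero here unlike for $-\Delta$, comes from $\sum_n|e_n|^2/\lambda_n - \E\|u\|^2$ not being constant in $x$. We bound it using $|e_n|^2-1$ being small in a negative Sobolev norm, which gives an $H^s$ contribution.
\end{itemize}
Differences $U^N-U^M$ are handled the same way, yielding the rate $N^{-\alpha}$ outside a set of measure $e^{-N^{c}}$.

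\textbf{Step 4: globalisation.} The Gibbs-type measure built from the Wick-ordered Hamiltonian (finite almost surely, defocusing) is invariant for the truncated flow. Following Bourgain, we iterate the local theory over $T/\delta$ steps. At each step we use invariance to ensure that the data at time $j\delta$ stays in the good set, except on a set of measure at most $\kappa$, uniformly in $N$. Passing to the limit gives existence, uniqueness, the limit $c_\infty$ (the $c_N$ form an $L^2(\Omega)$-Cauchy sequence, again by Wiener chaos) and the convergence rate.

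\textbf{Main obstacle.} The key difficulty is Step 3's probabilistic trilinear bounds for $-\Delta+V^r$. Constructing $V^r$ so that products of eigenfunctions are almost diagonal in frequency, while the eigenvalues remain close to $|n|^2$ so that the resonance counting survives, is delicate. Steps 2 and 3 trade off against each other here: the closer $V^r$ is to $V$, the easier Step 2 becomes and the harder Step 3.
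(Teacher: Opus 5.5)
Your architecture is the paper's: reduction to $-\Delta+V^r$, a Bourgain-type fixed point with probabilistic trilinear bounds and lattice counting, the extra Wick residual, and globalisation through the invariant truncated Gibbs measure. However, Step 2 rests on a false claim, and that claim hides the key idea.

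\textbf{The gap in Step 2.} $V^a=V-V^r$ is \emph{not} smoothing, whatever the regularity of $V$. It keeps the entries $\hat V(n-n')$ between $e^{in'\cdot x}$ and $e^{in\cdot x}$ for all $n,n'$ in different clusters. For example, take $n=(a,b)$ with $|a|\sim|n|$ and $n'=n-(1,0)$: they lie in different clusters since $||n|^2-|n'|^2|\sim|n|$, yet the entry $\hat V(1,0)$ does not decay. $V\in H^{2^+}$ only helps when $|n-n'|$ is large. Hence the source $V^a e^{it(-\Delta+V^r)}P_N\Gg_{-\Delta+V^r}(\omega)$ in the equation for your $U^N$ is only in $H^{0^-}$ at each time. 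It cannot be placed in $U^N\in X_{s,b}$, $s>0$, on the grounds you give. The same defect blocks the return to the quantity in the statement, which subtracts the $-\Delta+V$ evolution of $\Gg_{-\Delta+V}$.

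The missing mechanism is non-resonance. By Lemma \ref{lem:Partition}, every coupling left in $V^a$ has either $||n|^2-|n'|^2|\gtrsim(|n|+|n'|)^{\delta}$ or $|n-n'|\gtrsim(|n|+|n'|)^{\delta}$; the latter is where the decay of $\hat V$ enters. Dividing the former part by $|n|^2-|n'|^2$ gives $Q\in\Ll^{-\delta}$. Conjugating by $1+Q$ (Proposition \ref{prop:comutadorlaplmasv}) then yields $\|(e^{it(-\Delta+V)}-e^{it(-\Delta+V^r)})\phi\|_{X_{s+\delta,b}}\leeq\|\phi\|_{H^s}$.

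This also settles the construction of $V^r$, which you leave open. It is not about making $V^r$ ``close'' to $V$: it must absorb exactly the resonant couplings, i.e.\ be block-diagonal on Bourgain's clusters $\Omega_\alpha$. The cluster geometry (tangent segments of length $\leeq|n|^{\rho}$) then gives the Fourier concentration of Lemma \ref{lem:control_e_f} and $\|e_n^r\|_\infty\le C(V)$, on which your counting step relies. The real tradeoff is only in the exponents $\rho,\delta$.

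\textbf{Three smaller points.} First, in Step 3, the rule ``at least two $U$ factors $\Rightarrow$ deterministic'' is wrong. Strichartz closes only when the two highest frequencies both carry $U$ (cases A, B, A$'$, C$'$). When the random linear term sits at the highest or second-highest frequency (E, E$'$, C, B$'$), the deterministic bound is of size $(N^1)^{s}(N^2N^3)^{-s}(N^2)^{\varepsilon}$ or $(N^2)^{\varepsilon}(N^3)^{-s}$, which is not summable. Independence plus counting (random-matrix bounds) is needed there too.

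Second, the Wick residual is not handled by $|e_n^r|^2-1$ being small for each $n$. It has $O(1)$ low Fourier modes whenever $e_n^r$ spreads over two neighbouring points of its cluster. What makes it harmless is sparseness: $\widehat{|e_n^r|^2}(k)\neq0$ with $k\neq0$ forces $|\langle k,n\rangle|\leeq|n|^{\rho}$. For fixed $k$, this holds for only $\leeq N^{1+\rho}$ of the $\sim N^2$ modes $|n|\sim N$ (cases L, L$'$).

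Third, comparing the data through a coupling, instead of the paper's Feldman--H\'ajek absolute continuity with Radon--Nikodym bounds, is a legitimate alternative. It must, however, use the same white noise. Set $A=-\Delta+V$, $B=-\Delta+V^r$ and $W=A^{1/2}\Gg_{-\Delta+V}(\omega)$. Then $B^{-1/2}W$ has the law of $\Gg_{-\Delta+V^r}$, and $(1-\Delta)^{s/2}(A^{-1/2}-B^{-1/2})$ is Hilbert--Schmidt for $s<2$ by the resolvent formula, since $V^a$ is bounded. Pairing the same $g_n$ with the two eigenbases can fail, because eigenvectors of near-degenerate eigenvalues from different clusters are not stable under $V^a$. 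With this coupling, the local theory must also accept data $\Gg_{-\Delta+V^r}+h$ with $h\in H^s$. The deterministic estimates allow this, as in (\ref{eq:Tprima}).
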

\subsection{Organization of the paper}
In section \ref{sect:def_and_tools_1} we introduce the tools that we will use along this paper, in particular we define the Bourgain spaces and a Strichartz type estimate that do the heavy lifting when it comes to the deterministic contributions later on. In this section we also present the probabilistic tools such as the hypercontractivity estimates that allow us to use the expectation of the random variables as a bounds. We introduce as well the Wiener measure associated to an operator and show that it is somewhat stable to a perturbation of order $0$. Next in section \ref{sect:Diagonaliation_2} we define $V^r$, a pseudo diagonalized version of $V$ in fourier, and study its eigenfunctions as well as the Wiener measure that it defines. In section \ref{sect:LWP_4} we establish LWP through a fixed point argument as well as the approximation argument by the truncated equation for initial data $\Gg_{-\Delta+V^r}(\omega)$, then thanks to results from the previous section it follows that we have LWP for $\Gg_{-\Delta+V}(\omega)$. Finally in section \ref{sect:measure_5} we prove theorem \ref{thm:thm1} by introducing the Gibbs measure as a weighted Wiener measure, and through its invariance by the flow and a pullback argument we conclude.

\subsection{Notation}
Finally beefore we jump in some words on notation. When writing $N,M,N^i,\dots$ They are to be understood as dyadic numbers. In the same way $n\sim N$ is in the dyadic sense $N/2\le |n|<N$. 
\begin{rem}
$\#\{n\in \Z^2, |n|\sim N\}\leq 4N^2.$
\end{rem}
When we work with the non linearity we will decompose in frequencies $n_1,n_2,n_3$ which correspond to $u_1,u_2,u_3$ and we will write them in growing module order $|n^1|\leq|n^2|\leq|n^3|$, the same applies to $N^1,N^2,N^3$. In general the subscript corresponds to the direct decomposition where the superscript corresponds to the ordered decomposition. This notation will be paired with projections, for a self-adjoint operator the projection $\Pi_{A<N}$ is defined by functional calculus. When working on the nonlinearity we will also use the notation $\delta(a=b)$ which is valued $1$ if true and $0$ if not.

We will write $0^+,0^-,2^+, \dots$ it refers to any number bigger, respectively smaller than the number in question. We note for $n\in\R^d$, $\la n  \ra:=(1+|n|^2)^{1/2}$ and the spaces $H^s, s\geq 0$ are the classical Sobolev spaces defined by $\|f\|^2_{H^s}=\sum_n\la n\ra^{2s}|\hat{f}_n|^2$ with $H^{s},s<0$ being defined by duality. When writing $\leeq, \gtrsim$ it is up to some constant that depends at most on $\|V\|_{H^{2^+}}$ and universal constants. Finally when we do the controls we will often write $N^\varepsilon$, this means that the control holds for any positive power of $N$. In a similar way we use $\alpha$ as some real number bigger than $0$.
\section{Definitions and tools}\label{sect:def_and_tools_1}
Let us begin by laying out the two set of tools that we will use in this paper, Bourgain spaces and probabilistic tools such as Wiener measures.
\subsection{Bourgain Spaces and Strichartz}\label{sect:bourgainspaces}
We now introduce the Bourgain spaces on which we will develop the local theory and that will act as surrogates for $L^{\infty}_t(H_A^s)$. For further reading and generalization of these spaces we recommend \cite{Tao2006NonlinearDE}.  This spaces allow us to apply Strichartz type estimates, they also allow for use of lattice counting estimates when bounding resonances. In the following $I$,$|I_\tau|=\tau$ are bounded time intervals.
\begin{defn}
Let $A$ be a self adjoint positive operator, let $b,s \in \mathbb{R}$ we note $X^A_{s,b}(\mathbb{R})=X_{s,b}(\mathbb{R})$ the closure of $\mathcal{C}^{\infty}_0(\mathbb{R},H_A^s(\T^2))$ in $L^2(\mathbb{R},L^2(\T^2))$ by the norm:
\[ \|e^{-itA}u(t,.)\|_{H^b(\R,H^s(\T^2)).}\]
For a given interval $I$ the space $X^A_{s,b}(I)$ is the restriction in time of the $X^A_{s,b}(\mathbb{R})$ space by the infimum norm.
\end{defn} 
The family of operators that interest us are perturbations of order $0$ of the Laplacian. As far as we are concerned we can take this to mean that our operators are of the form $A=-\Delta+\widetilde{V}$ with $\widetilde{V}$ symmetric, positive and $\|\widetilde{V}u\|_{H^s}\le C(\widetilde{V},s)\|u\|_{H^s}, \forall s\in \R $. We first introduce some known lemmas that show the equivalence between these operators when it comes to Sobolev spaces.
\begin{lem}\label{lemma:decro_vp}
Set $A=-\Delta+\widetilde{V}$, and let $\{\lambda_n\}_{n\in\Z^2}$ be its eigenvalues in growing order\footnote{Take a spiraling indexing of $\Z^2$. }. We have $\lambda_n\in \big[ \ |n|^2-C(\widetilde{V}),|n|^2+C(\widetilde{V}) \ \big]$,
With $\|\widetilde{V}u\|_{H^{1^+}}\le C(\widetilde{V})\|u\|_{L^2}$
\end{lem}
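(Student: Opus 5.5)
The idea is to compare the eigenvalues of $A=-\Delta+\widetilde{V}$ with those of $-\Delta$ through the min--max characterization. Here $\widetilde{V}$ is a bounded self-adjoint perturbation, since the hypothesis gives $\|\widetilde{V}u\|_{L^2}\le\|\widetilde{V}u\|_{H^{1^+}}\le C(\widetilde{V})\|u\|_{L^2}$. The eigenvalues of $-\Delta$ on $\T^2$ are the values $|n|^2$, $n\in\Z^2$, each attached to the eigenfunction $e^{in\cdot x}$. If $\Z^2$ is indexed by a spiral, i.e. so that $|n|^2$ is non-decreasing along the indexing, then $\mu_k=|n_k|^2$ is exactly the $k$-th eigenvalue of $-\Delta$ counted with multiplicity. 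I would therefore index the eigenvalues $\lambda_n$ of $A$ by the same spiral ordering.

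First, $A$ is self-adjoint with domain $H^2(\T^2)$, since $\widetilde{V}$ is bounded and symmetric (Kato--Rellich, or simply because the perturbation is bounded). Its resolvent is compact because $H^2\hookrightarrow L^2$ is compact. Hence $A$ has a discrete spectrum $\lambda_{n_1}\le\lambda_{n_2}\le\dots$ with an orthonormal eigenbasis, and $A$ is bounded below by $-C(\widetilde{V})$. Second, I would apply Courant--Fischer to both operators:
\[
\lambda_{n_k}=\min_{\dim F=k}\ \max_{u\in F,\ \|u\|_{L^2}=1}\langle Au,u\rangle ,
\qquad
|n_k|^2=\min_{\dim F=k}\ \max_{u\in F,\ \|u\|_{L^2}=1}\langle -\Delta u,u\rangle ,
\]
where $F$ ranges over subspaces of $H^1$. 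For every normalized $u$ we have $|\langle\widetilde{V}u,u\rangle|\le\|\widetilde{V}\|_{L^2\to L^2}\le C(\widetilde{V})$. So the two quadratic forms differ by at most $C(\widetilde{V})$ uniformly on the unit sphere, and taking max and then min gives $\big|\lambda_{n_k}-|n_k|^2\big|\le C(\widetilde{V})$. This is the claimed inclusion $\lambda_n\in[\,|n|^2-C(\widetilde{V}),|n|^2+C(\widetilde{V})\,]$.

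The only delicate point is a bookkeeping issue rather than an analytic one. The lemma labels eigenvalues by $n\in\Z^2$, so I must make sure the labeling is consistent. Within a shell $|n|^2=m$ the spiral ordering is arbitrary, but the bound $|\lambda-|n|^2|\le C$ does not depend on which element of the shell is matched to which eigenvalue: all elements of a shell share the same $|n|^2$. So I would fix the spiral order once, define $\lambda_n:=\lambda_{n_k}$ for $n=n_k$, and note that the estimate holds for every choice. This Weyl-type perturbation bound is standard, so I do not expect a real obstacle. The constant can be taken to be the $L^2$ operator norm of $\widetilde{V}$, which is bounded by the stated $H^{1^+}$ constant.
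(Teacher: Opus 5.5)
Your proposal is correct and is essentially the paper's argument: a two-sided quadratic-form bound $-\Delta-C(\widetilde{V})\le -\Delta+\widetilde{V}\le -\Delta+C(\widetilde{V})$ followed by Courant--Fischer. The only difference is that the paper obtains the $L^2$-boundedness of $\widetilde{V}$ via a Sobolev embedding while you read it off the stated hypothesis; your remark that the ``spiral'' indexing must be one along which $|n|^2$ is non-decreasing is a useful clarification the paper leaves implicit, since a square spiral would make the stated bound false.
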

\begin{proof}
We have that in the sens of quadratic forms and through Sobolev injections:
\begin{equation*}
-\Delta-C(\widetilde{V})\leq -\Delta+\widetilde{V}\leq \Delta +C(\widetilde{V}).
\end{equation*}
We conclude by using Courant-Fischer.
\end{proof}
As stated before we take $V>0$ thus granting us $\lambda_n>0$. This means that $\{\lambda_n\}_{n\in\Z^2}$ behaves asymptotically as $\{|n|^2\}_{n\in\Z^2}$ meaning $\lim_{|n|\to \infty} \lambda_n/|n|^2=1$.
This spectral closeness means that $-\Delta+\widetilde{V}$ and $-\Delta$ define the same Sobolev spaces:
\begin{cor}
For all $s\in\R$ there exists a constant $C(\widetilde{V},s)$ such that $\forall u\in H^{s}(\T^2)$ we have:
\begin{equation*}
\frac{1}{C}\|u\|_{H_{-\Delta}^s}\leq \|A^{s/2}u\|_{L^2(\T^2)}\leq C\|u\|_{H_{-\Delta}^s}
\end{equation*}
meaning that the spaces $H_{-\Delta}^s(\T^2)$ and $H_{A}^s(\T^2)$\footnote{The $H_{A}^s(\T^2)$ defined by the norm $\|A^{s/2}\cdot\|_{L^2(\T^2)}$} are equivalent with equivalent norms.
\end{cor}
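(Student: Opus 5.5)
The plan is to derive the corollary from Lemma \ref{lemma:decro_vp} through the spectral decomposition of $A$, first for $s\ge 0$ and then for $s<0$ by duality. Write $\{e_n\}$ for an orthonormal eigenbasis of $A$ with eigenvalues $\lambda_n>0$. Then $\|A^{s/2}u\|_{L^2}^2=\sum_n \lambda_n^{s}|\la u,e_n\ra|^2$. Lemma \ref{lemma:decro_vp} gives $\lambda_n\in[|n|^2-C,|n|^2+C]$, and since $\lambda_n>0$ (positivity of $\widetilde V$) and the spectrum is discrete, we get $\lambda_n\ge\lambda_{\min}>0$. Hence $\lambda_n\simeq\la n\ra^2$ uniformly in $n$, with constants depending only on $C(\widetilde V)$ and $\lambda_{\min}$. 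This does not yet compare the two norms, because the bases differ: $\{e_n\}$ is not the Fourier basis.

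\emph{Case $s=2k$, $k\in\N$.} Here we compare directly at the operator level. We have $\|Au\|_{L^2}\le\|\Delta u\|_{L^2}+\|\widetilde Vu\|_{L^2}\lesssim\|u\|_{H^2}$. Conversely, $\|u\|_{H^2}\lesssim\|\Delta u\|+\|u\|\le\|Au\|+C\|u\|$, and $\|u\|_{L^2}\le\lambda_{\min}^{-1}\|Au\|_{L^2}$, so $\|u\|_{H^2}\lesssim\|Au\|_{L^2}$. For general $k$ we induct. Write $A^k=A^{k-1}A$ and use $\|A^{k-1}v\|\simeq\|v\|_{H^{2k-2}}$ with $v=Au$. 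Then
\[
\|Au\|_{H^{2k-2}}\simeq\|(-\Delta+\widetilde V)u\|_{H^{2k-2}}\simeq\|u\|_{H^{2k}}+O(\|u\|_{H^{2k-2}}),
\]
where the error term comes from the boundedness of $\widetilde V$ on $H^{2k-2}$. The lower-order error is absorbed by the induction hypothesis together with interpolation, $\|u\|_{H^{2k-2}}\le\epsilon\|u\|_{H^{2k}}+C_\epsilon\|u\|_{L^2}$, and $\|u\|_{L^2}\lesssim\|A^ku\|$.

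\emph{Case $0\le s\le 2k$.} Both families $H^s_{-\Delta}$ and $H^s_A=D(A^{s/2})$ form complex interpolation scales: the first is $[L^2,H^{2k}]_{s/2k}$, and the second comes from functional calculus of a positive self-adjoint operator. The identity map is bounded with bounded inverse at both endpoints, so by complex interpolation it is bounded at every intermediate $s$. This yields the equivalence for all $s\ge0$.

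\emph{Case $s<0$.} Both $H^{s}_{-\Delta}$ and $H^{s}_A$ are the duals of the corresponding $H^{-s}$ spaces with respect to the $L^2$ pairing. Indeed, $\|A^{s/2}u\|_{L^2}=\sup_{\|A^{-s/2}\phi\|\le1}|\la u,\phi\ra|$ by self-adjointness. So the equivalence for $-s>0$ transfers directly to $s$.

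The main obstacle I expect is the regularity bookkeeping in the inductive step for large $s$, where it matters exactly how $\widetilde V$ maps $H^{s}$ to $H^{s}$. The standing assumption $\|\widetilde Vu\|_{H^s}\le C(\widetilde V,s)\|u\|_{H^s}$ for all $s$ is precisely what makes the induction close, so the constant ends up depending on $s$ through these $C(\widetilde V,s)$. For the later concrete case $\widetilde V=V\in H^{2^+}$, this holds only for a bounded range of $s$, which is all that is needed.
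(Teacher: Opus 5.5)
Your proof is correct, and it takes a genuinely different and more complete route than the paper. The paper gives no separate argument. It presents the corollary as an immediate consequence of the eigenvalue localization $\lambda_n\in[|n|^2-C,|n|^2+C]$ of Lemma \ref{lemma:decro_vp} (``this spectral closeness means\dots'').

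As you rightly observe, that information alone cannot compare $\sum_n\lambda_n^s|\langle u,e_n\rangle|^2$ with $\sum_n\langle n\rangle^{2s}|\hat u(n)|^2$, because it says nothing about the eigenfunctions. A unitary conjugate $U^*(1-\Delta)U$ has the same spectrum but in general a different Sobolev scale.

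The paper's proof of Lemma \ref{lemma:decro_vp} does contain more than the eigenvalue bounds, namely the quadratic-form bound $-\Delta-C\le A\le -\Delta+C$. Together with $A\ge\lambda_{\min}>0$, this gives $A\simeq 1-\Delta$ as forms, hence the case $s=1$. The Heinz--L\"owner inequality and duality then give $|s|\le 1$. Beyond that range one genuinely needs mapping properties of $\widetilde V$ on $H^s$, which is exactly what your induction at $s=2k$ uses before you interpolate and dualize.

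Your route therefore reaches the full range of $s$ claimed and makes the dependence of the constant on $s$ explicit through $C(\widetilde V,s)$. It also correctly flags that for $V\in H^{2^+}$ only a bounded range of $s$ is available.

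Two small simplifications:
\begin{itemize}
\item In the inductive step you can skip the $\epsilon$-interpolation and absorption. Use the induction hypothesis directly: $\|u\|_{H^{2k-2}}\lesssim\|A^{k-1}u\|\le\lambda_{\min}^{-1}\|A^ku\|$.
\item To get equality of the spaces, and not just the inequality on $H^{2k}$, note that $u\in D(A^k)$ gives $Au\in D(A^{k-1})=H^{2k-2}$. Hence $-\Delta u=Au-\widetilde Vu\in H^{2k-2}$, so $u\in H^{2k}$.
\end{itemize}
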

\begin{rem}
If $(-\Delta+\widetilde{V})^{-1}$ is compact we can write $u\in L^2\big(\T^2\times [0,\tau]\big)$ as
\begin{equation}\label{def:decompos}
u(x,t)=\sum_{n\in \N}\int_\mathbb{R} d\lambda {u}_n(\lambda)e_ne^{\lambda t}.
\end{equation}
With $\{e_n\}_{n\in\N}$ a hilbertian base of $L^2(\T^2)$. We have that its $X_{s,b}([0,\tau])$ norm may be written as 
\begin{equation}
\sum_n|\lambda_n|^{s}\int d\lambda \la\lambda-\lambda_n\ra^{2b}|u_n(\lambda)|^2.
\end{equation}
\end{rem}

In the same fashion we can show that for $|b|\le 1$ that the spaces $X^A_{s,b}$ and $X^{-\Delta}_{s,b}$ are equivalent.
\begin{lem}
    For an interval $|I|\le1$, there exists a constant $C(\widetilde{V},|I|)$ such that for $s\in \R$ and $|b|\le 1$
    \begin{equation*}
    \frac{1}{C}\|u\|_{X^{-\Delta}_{s,b}}\leq \|u\|_{X^A_{s,b}}\leq C\|u\|_{X^{-\Delta}_{s,b}}.
    \end{equation*}
\end{lem}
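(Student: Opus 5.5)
The plan is to prove the equivalence first on the whole line $\R$, at the two endpoints $b=0$ and $b=1$. We then extend it to $0\le b\le 1$ by interpolation, to $-1\le b<0$ by duality, and finally pass to the restriction spaces on $I$. Throughout, $A=-\Delta+\widetilde{V}$ with $\widetilde{V}$ symmetric, positive and bounded on every $H^s$, and we use the Corollary above: $H^s_A$ and $H^s_{-\Delta}$ coincide with equivalent norms.

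\emph{Endpoint $b=0$.} Here $\|u\|_{X^A_{s,0}}=\|e^{-itA}u\|_{L^2_tH^s}$. Since $e^{-itA}$ is unitary on $H^s_A$, the Corollary gives $\|e^{-itA}u(t)\|_{H^s}\simeq\|u(t)\|_{H^s}$ uniformly in $t$. Integrating in $t$, both $X^A_{s,0}$ and $X^{-\Delta}_{s,0}$ are equivalent to $L^2_tH^s$.

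\emph{Endpoint $b=1$.} Using $\partial_t(e^{-itA}u)=e^{-itA}(\partial_t-iA)u$ and the $b=0$ case, we get
\[
\|u\|_{X^A_{s,1}}\simeq\|u\|_{L^2_tH^s}+\|(i\partial_t+A)u\|_{L^2_tH^s},
\]
and the same identity holds with $A$ replaced by $-\Delta$. The key observation is
\[
(i\partial_t+A)u=(i\partial_t-\Delta)u+\widetilde{V}u .
\]
By hypothesis $\|\widetilde{V}u\|_{L^2_tH^s}\le C(\widetilde{V},s)\|u\|_{L^2_tH^s}$. Therefore each norm controls the other, with constants depending only on $\widetilde{V}$ and $s$. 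This is the only place where the potential really enters, and it works precisely because $\widetilde{V}$ is of order $0$.

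\emph{Intermediate and negative $b$.} For $0<b<1$, note that $X^A_{s,b}=e^{itA}H^b(\R;H^s)$. Vector-valued Sobolev spaces in time interpolate by the complex method, so $X^A_{s,b}=[X^A_{s,0},X^A_{s,1}]_b$, and likewise for $-\Delta$. The identity map is bounded $X^A_{s,j}\to X^{-\Delta}_{s,j}$ and back for $j=0,1$, so it is bounded for every $b\in[0,1]$. For $-1\le b<0$, we use that $X^A_{-s,-b}$ is the dual of $X^A_{s,b}$ under the space-time $L^2$ pairing: $e^{itA}$ is unitary on $L^2$, and $H^{-s}_A$ is dual to $H^s_A$. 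The adjoint of a bounded identity is again the identity, so the equivalence transfers to negative $b$.

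\emph{Restriction to $I$, and the main obstacle.} On an interval $I$, the restriction norms are infima over extensions to $\R$. Both norms are taken over the same class of extensions, so the equivalence on $\R$ passes directly to $I$. The hypothesis $|I|\le1$ then only serves to keep the constant independent of the interval: if one works instead with a smooth time cutoff adapted to $I$, its $H^1$ norm stays bounded. I expect the main technical point to be justifying the interpolation step cleanly, namely identifying $X_{s,b}$ as the interpolation space of its endpoints. The unitary conjugation by $e^{itA}$ reduces this to the standard fact for $H^b(\R;H^s)$, so it should be routine.
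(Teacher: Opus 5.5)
Your proof is correct and takes the same route as the paper. You get $b=0$ from the equivalence $H^s_A\simeq H^s$ together with unitarity of the propagator, and $b=1$ from $i\partial_t(e^{-itA}u)=e^{-itA}(i\partial_t+A)u$, the triangle inequality and boundedness of $\widetilde V$ on $H^s$; interpolation then covers $0<b<1$ and duality covers $b<0$. The only difference is that you work on $\R$ and pass to $I$ through the infimum over extensions, rather than computing directly on $\T^2\times I$ as the paper does, which is slightly cleaner and gives a constant independent of $|I|$.
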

\begin{proof}
    For $b=0$ it follows directly from the previous lemma and the stability of the Schrödinger semigroup (with and without potential). It therefore suffices to show it holds for $b=1$, then $0<b<1$ follows by interpolation and $-1<b<0$ from duality. We have that for $f\in L^{2}(\T^2\times I)$
    \begin{equation*}
        i\partial_t(e^{itA}f)=e^{itA}(i\partial_t-A)f
    \end{equation*}
    and thus
    \begin{align*}
        & \|u\|^2_{X^{-\Delta+\widetilde{V}}_{s,1}} = \|\la-\Delta+\widetilde{V}\ra^{s/2}(i\partial_t-\Delta+\widetilde{V})u \|^2_{L^{2}(\T^2\times I)}+\|\la-\Delta+\widetilde{V}\ra^{s/2} f\|^2_{L^{2}(\T^2\times I)}\\
        & \|u\|^2_{X^{-\Delta}_{s,1}} = \|\la-\Delta\ra^{s/2}(i\partial_t-\Delta)u \|^2_{L^{2}(\T^2\times I)}+\|\la-\Delta\ra^{s/2} f\|^2_{L^{2}(\T^2\times I)},
    \end{align*}
    we are able to conclude by triangular inequality.
\end{proof}
Now we show that for $b>1/2$ we can use $X_{s,b}$ as a surrogate of $L^{\infty}_t(H_A^s)$.
\begin{lem}\label{lemma:Xsb}
Let $b>1/2$ we have that for $t\in I$:
\begin{equation}
\|u(t)\|_{H^s(\mathbb{T}^d)}\leeq \|u\|_{X_{s,b}(I)}
\end{equation}
up to some constant depending on $b$.
\end{lem}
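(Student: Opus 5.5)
The plan is to reduce the estimate to the one-dimensional Sobolev embedding $H^b(\R)\hookrightarrow L^\infty(\R)$ for $b>1/2$, applied to the profile $v(t):=e^{-itA}u(t)$, and then to use that $e^{itA}$ is unitary on $H^s_A$.

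First I treat the case $I=\R$. Let $u\in X_{s,b}(\R)$. By the definition, $\|u\|_{X_{s,b}(\R)}=\|v\|_{H^b(\R,H^s)}$, where $H^s$ can be taken as $H^s_A$ up to the constants of the previous Corollary. Expand $v$ in the eigenbasis $\{e_n\}$ of $A$:
\begin{equation*}
v(t)=\sum_n v_n(t)e_n, \qquad \|v(t)\|_{H^s_A}^2=\sum_n \lambda_n^{s}|v_n(t)|^2 .
\end{equation*}
For each $n$, Fourier inversion in $t$ together with Cauchy--Schwarz gives
\begin{equation*}
|v_n(t)|\le \int_\R |\hat v_n(\tau)|\,d\tau \le \Big(\int_\R \la\tau\ra^{-2b}d\tau\Big)^{1/2}\Big(\int_\R \la\tau\ra^{2b}|\hat v_n(\tau)|^2d\tau\Big)^{1/2}.
\end{equation*}
The first factor is a finite constant $C_b$ exactly because $b>1/2$. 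Squaring, multiplying by $\lambda_n^{s}$ and summing over $n$ gives
\begin{equation*}
\sup_t\|v(t)\|_{H^s_A}\le C_b\|v\|_{H^b(\R,H^s_A)}.
\end{equation*}
This is just the vector-valued embedding $H^b(\R;\HH)\hookrightarrow C_b(\R;\HH)$ for a Hilbert space $\HH$; it can also be obtained directly from Minkowski's inequality in $\HH$. Since $e^{itA}$ commutes with $A^{s/2}$ and is unitary on $L^2$, we have $\|u(t)\|_{H^s_A}=\|v(t)\|_{H^s_A}$. Combining with the norm equivalence between $H^s_A$ and $H^s$ from the previous Corollary gives
\begin{equation*}
\|u(t)\|_{H^s}\leeq \|u\|_{X_{s,b}(\R)}.
\end{equation*}

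Next I pass to a bounded interval $I$. For $t\in I$ and any extension $\tilde u\in X_{s,b}(\R)$ with $\tilde u|_I=u$, we have $u(t)=\tilde u(t)$. The previous step then gives $\|u(t)\|_{H^s}\leeq\|\tilde u\|_{X_{s,b}(\R)}$. Taking the infimum over all such extensions yields the claim, with a constant depending only on $b$ (and on $\widetilde V$ and $s$ through the norm equivalence).

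The only delicate point is making pointwise evaluation in $t$ meaningful, since the space is defined as a closure in $L^2$. I will first prove the bound for $u\in\mathcal C^\infty_0(\R,H^s_A)$. The inequality then shows that $t\mapsto u(t)$ is continuous into $H^s$ with the sup bound, so it extends by density to the whole closure. The resulting continuous representative is the one to which the statement applies.
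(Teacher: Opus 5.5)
Your proof is correct and is essentially the paper's argument. The paper expands $u$ in the eigenbasis with time-Fourier variable $\lambda$ and applies Cauchy--Schwarz against the weight $\la\lambda-\lambda_n\ra^{-2b}$, which is integrable because $b>1/2$; your profile $v=e^{-itA}u$ with variable $\tau$ is the same computation after the shift $\tau=\lambda-\lambda_n$, and your extra steps (the infimum over extensions for $X_{s,b}(I)$, and the density argument giving a continuous representative) make explicit points the paper leaves implicit.
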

This lemma can be seen as a consequence of the Sobolev embeddings of $H^b_t(I) \hookrightarrow C^0_t$ for $b>1/2$, and the fact that $e^{itA}$ preserves the $L^2(\T^2)$ norm. 
\begin{proof}
For a fixed $t\in I$ control $\|u(t)\|_{H^s}^2$:
\begin{align*}
&\sum_n \lambda_n^s\left| \int_\mathbb{R} d\lambda \hat{u}(n,\lambda)e^{i\lambda t}\right|^2 \leq \sum_n \lambda_n^s\left( \int_\mathbb{R} d\lambda |\hat{u}(n,\lambda)|\right)^2 
\\= & \sum_n \lambda_n^s\left( \int_\mathbb{R} d\lambda |\hat{u}(n,\lambda)|\frac{\la\lambda-\lambda_n\ra^{b}}{\la\lambda-\lambda_n\ra^{b}}\right)^2\leq \sum_n \lambda_n^s\int_\mathbb{R} d\lambda\la\lambda-\lambda_n\ra^{2b} |\hat{u}(n,\lambda)|^2\int_\mathbb{R} d\lambda\la\lambda-\lambda_n\ra^{-2b}\\
 \le & C(b)\sum_n \lambda_n^s\int_\mathbb{R} d\lambda\la\lambda-\lambda_n\ra^{2b} |\hat{u}(n,\lambda)|^2 \leq C(b) \|u\|_{X_{s,b}(I)}^2.
\end{align*}
\end{proof}
Finally we re-introduce the Strichartz type estimate shown in \cite{Bourgain1993} that will later take care of the deterministic elements when it comes to developing the local Cauchy theory. 
\begin{lem}\label{lem:Strichartz}
Let , $\varepsilon>0$, $f,g\in L^2(\T^2)$  be such that: $f(x)=\sum_{|n|\sim N} a_ne^{in\cdot x}$ and $g(x)=\sum_{|m|\sim M}b_me^{im\cdot x}$ then:
\begin{equation*}
\left\|(e^{-it\Delta}f) \ (e^{-it\Delta}g)\right\|_{L^2(\T^2\times I_\tau)} \leeq (N\wedge M)^\varepsilon \tau^\delta \left\|f\right\|_2\left\|g\right\|_2.
\end{equation*}
\end{lem}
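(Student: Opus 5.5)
We plan to prove Lemma~\ref{lem:Strichartz} by the classical Bourgain argument: reduce the bilinear $L^2_{t,x}$ norm to counting lattice points on a circle intersected with a ball, and use the divisor bound for the number of such points.

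First we reduce to a time-periodic setting. Since $I_\tau$ has length $\tau\le 1$, we bound the norm over $I_\tau$ by the norm over $[0,2\pi]$. The functions $e^{-it\Delta}f$ and $e^{-it\Delta}g$ are $2\pi$-periodic in $t$, because $|n|^2\in\Z$. The factor $\tau^\delta$ is recovered at the end by Hölder in time, trading a small amount of integrability (see the last paragraph).

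Write $(e^{-it\Delta}f)(e^{-it\Delta}g)=\sum_{k\in\Z^2}\sum_{j\in\Z} c_{k,j}\,e^{i(k\cdot x+jt)}$, with
\[
c_{k,j}=\sum_{n+m=k,\;|n|^2+|m|^2=j} a_n b_m .
\]
By Parseval in $(x,t)$, the squared $L^2$ norm equals $(2\pi)^3\sum_{k,j}|c_{k,j}|^2$. Cauchy--Schwarz then gives $|c_{k,j}|^2\le \#S_{k,j}\sum_{S_{k,j}}|a_n|^2|b_m|^2$, where $S_{k,j}=\{(n,m):n+m=k,\ |n|^2+|m|^2=j,\ |n|\sim N,\ |m|\sim M\}$. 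Summing over $(k,j)$ gives
\[
\sum_{k,j}|c_{k,j}|^2\le \Big(\sup_{k,j}\#S_{k,j}\Big)\,\|f\|_2^2\|g\|_2^2 .
\]
It therefore suffices to show that $\#S_{k,j}\lesssim (N\wedge M)^{\varepsilon}$.

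The counting is the main step. Assume $N\le M$ by symmetry. Given $(k,j)$, the map $(n,m)\mapsto n$ is injective, since $m=k-n$. The constraint becomes $|2n-k|^2=2j-|k|^2$. So $2n-k$ is a lattice point on a circle of radius $R$ centered at the origin, with $R^2=2j-|k|^2\in\Z$. It also lies in the ball $\{|2n-k|\lesssim N+|k|\}$. By the divisor bound, a circle of integer squared radius $R^2$ contains $\lesssim R^{\varepsilon}$ lattice points. This directly gives $\#S_{k,j}\lesssim (N+M)^{\varepsilon}$, since $R\lesssim N+M$.

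The \emph{main obstacle} is replacing $(N+M)^\varepsilon$ by $(N\wedge M)^\varepsilon$ when $M\gg N$. Here $n$ ranges over a ball of radius $N$, whose image is a ball of radius $2N$ centered at $-k$, while the circle may have a huge radius $R\sim M$. We would invoke the Bombieri--Pila/Jarník type bound: an arc of length $\ell$ on a circle of radius $R$ contains at most $O(1+\ell R^{-1/3})$ lattice points, and more precisely any arc of length $\lesssim R^{1/3}$ contains at most two. If $N\le M^{1/3}$, the relevant arc has length $\lesssim N\le R^{1/3}$ and the count is $O(1)$. Otherwise $M\le N^3$, so $(N+M)^{\varepsilon}\lesssim N^{3\varepsilon}$, and relabelling $\varepsilon$ finishes the count.

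Finally, we obtain $\tau^\delta$. We would prove the $L^{2+}$-in-time version of the same estimate by interpolating the $L^2$ bound above with the trivial $L^\infty_t$ bound $\|f\|_2\|g\|_2\,(N\wedge M)^{1}$, which follows from Bernstein on the smaller frequency piece. This costs only an arbitrarily small power of $N\wedge M$. Hölder on $I_\tau$ then yields the factor $\tau^\delta$ with $\delta>0$ small depending on $\varepsilon$.
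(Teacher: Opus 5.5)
Your proof is correct and follows the same route as the paper: Parseval in $(x,t)$, Cauchy--Schwarz over the fibres $\{n+m=k,\ |n|^2+|m|^2=j\}$, and the divisor bound for lattice points on the circle $|2n-k|^2=2j-|k|^2$. Your two extra steps supply details the paper's proof omits. First, the radius of that circle is $|n-m|\sim M$, not of order $N$ as the paper asserts, so it is the Jarník arc bound together with the case $M\le N^3$ that actually gives $(N\wedge M)^\varepsilon$. Second, your interpolation with the Bernstein $L^\infty_t$ bound followed by H\"older in time justifies the factor $\tau^\delta$, which the paper's appeal to a change of variables does not, since parabolic rescaling does not preserve $\T^2$.
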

\begin{proof}
We can suppose $\tau=1$ and from there get the result through variable change. Furthermore we can suppose without loss of generality that $N\leq M$ Write:
\begin{align*}
(e^{-it\Delta}f)(e^{-it\Delta}g)=&\left(\sum_{|n|\sim N}a_ne^{i(n\cdot x+|n|^2t)}\right)\left(\sum_{|m|\sim M}b_m e^{i(m\cdot x+|m|^2t)}\right) \\=& \sum_{|p|\le N+M} e^{ip\cdot x}\sum_{|n|\sim N} a_nb_{p-n}e^{i(|n|^2+|p-n|^2)t},
\end{align*}
applying Cauchy-Schwarz:
\begin{equation*}
\|(e^{-it\Delta}f)(e^{-it\Delta}g)\|_{L^2(\T^2 \times [0,1])}^2\leq \left\{ \max_{\substack{|p|\leq N+M \\ |j|\leq 2N^2}} \ \ \ r_{p,j}\right\}\left(\sum_n |a_n|^2\right)\left(\sum_m |b_m|^2\right),
\end{equation*}
where $r_{p,j}=\# \left\{n\in \mathbb{Z}^2 : |n|\leq N \text{ and } |n|^2+|p-n|^2=j\right\}$. This is bounded by the number of integer solutions to the equation:
\begin{equation*}
x_1^2+x_2^2=2j-|p|^2.
\end{equation*}
with $x_i=(2n_i-p_i)$. We can estimate it grossly by the number of integer solutions to the equation $x_1^2+x_2^2=R$ for some $R$ of size of order $N^2$. Thus the max is bounded thanks to a result from Hardy and Landau by $CN^{2\varepsilon}$ for all $\varepsilon >0$. Thus we arrive at the result.
\end{proof}
We can take the reciprocal inequality.
\begin{cor}
Let $1\le p\le 2$, $b=1^+-1/p$, $f\in X_{s,b}$. We have the control $\forall \varepsilon>0$  :
\begin{equation}\label{coro:stricartz}
\left\|f g\right\|_{L^p(\T^2\times I_\tau)} \leeq (N\wedge M)^\varepsilon \left\|f\right\|_{X_{0,b}}\left\|g\right\|_{X_{0,b}}.
\end{equation}
\end{cor}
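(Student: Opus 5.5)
The plan is the standard transference argument: reduce the time-Fourier side to the linear case and then invoke Lemma~\ref{lem:Strichartz}. As in the proof of Lemma~\ref{lem:Strichartz}, I will assume (implicitly, as in the statement) that $f$ and $g$ are frequency-localised at dyadic scales $N$ and $M$, i.e.\ $\hat f(n,\cdot)$ is supported on $|n|\sim N$ and $\hat g(m,\cdot)$ on $|m|\sim M$. By the equivalence lemma for $X^A_{s,b}$ and $X^{-\Delta}_{s,b}$ (valid for $|b|\le1$), it suffices to work with $A=-\Delta$.

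\textbf{Endpoint $p=2$, $b=1/2^+$.} Writing $F(t)=e^{it\Delta}f(t)$ (with the paper's sign convention), we have
\[
f(t)=\int_\R \hat F(\sigma)\, e^{it\sigma}\, e^{-it\Delta}\phi_\sigma\, d\sigma ,
\]
where $\phi_\sigma=\hat F(\sigma)/\|\hat F(\sigma)\|_{L^2_x}$ is frequency-localised at $|n|\sim N$; here $\hat F(\sigma)$ denotes the time Fourier transform. The analogous representation holds for $g$ with $\mu$ and $\psi_\mu$.

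Then $fg$ is a double integral over $(\sigma,\mu)$ of $e^{it(\sigma+\mu)}\,(e^{-it\Delta}\phi_\sigma)(e^{-it\Delta}\psi_\mu)$, weighted by the scalar factors $\|\hat F(\sigma)\|_{L^2_x}\|\hat G(\mu)\|_{L^2_x}$. By Minkowski's inequality, and since the modulation $e^{it(\sigma+\mu)}$ has modulus one, Lemma~\ref{lem:Strichartz} bounds each slice by
\[
(N\wedge M)^\varepsilon \tau^\delta\, \|\hat F(\sigma)\|_{L^2_x}\|\hat G(\mu)\|_{L^2_x}.
\]
Cauchy--Schwarz in $\sigma$ with the weight $\la\sigma\ra^{-2b}$, which is integrable precisely because $b>1/2$, then gives
\[
\int \|\hat F(\sigma)\|_{L^2_x}\,d\sigma\leeq \|f\|_{X_{0,b}},
\]
and similarly for $g$. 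This is the endpoint $p=2$.

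\textbf{Endpoint $p=1$, $b=0^+$.} By Cauchy--Schwarz in $(t,x)$ on the bounded domain $\T^2\times I_\tau$,
\[
\|fg\|_{L^1}\le \|f\|_{L^2}\|g\|_{L^2}=\|f\|_{X_{0,0}}\|g\|_{X_{0,0}} .
\]
This holds with no loss, so in fact with $b=0$.

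\textbf{Interpolation.} For $1<p<2$ I interpolate between the two endpoints. The bilinear operator $(f,g)\mapsto fg$ is bounded $X_{0,0}\times X_{0,0}\to L^1$ and $X_{0,1/2^+}\times X_{0,1/2^+}\to L^2$ (the latter with constant $(N\wedge M)^\varepsilon$). Bilinear complex interpolation, with $\theta$ defined by $1/p=(1-\theta)+\theta/2$, i.e.\ $\theta=2-2/p$, gives boundedness into $L^p$ from $X_{0,b}$ with
\[
b=\theta\cdot\tfrac12^+=1^+-\tfrac1p,
\]
which matches the statement. The spaces $X_{0,b}$ form a complex interpolation scale, being weighted $L^2$ spaces on the Fourier side. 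The constant is $(N\wedge M)^{\theta\varepsilon}\le (N\wedge M)^\varepsilon$.

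\textbf{Main obstacle.} The delicate point is the interpolation step. It must be carried out while keeping the frequency localisation, which is fine since the dyadic projections commute with everything involved. It also requires handling the restriction spaces $X_{0,b}(I_\tau)$. I would interpolate on $\R$ using extensions close to optimal and then restrict; the equivalence of the $A$- and $-\Delta$-norms is needed only for $|b|\le 1$, which covers the whole range.
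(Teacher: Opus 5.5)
Your proposal is correct and follows the same route as the paper. The case $p=1$ is Cauchy--Schwarz. The case $p=2$ writes $f$ and $g$ as superpositions, over the time-frequency variable, of modulated free solutions, applies Lemma~\ref{lem:Strichartz} slice by slice, and integrates against the weight $\la\sigma\ra^{-2b}$, which is integrable since $b>1/2$. The intermediate range then follows by interpolation.

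Your version is cleaner than the paper's sketch. One small slip: the scalar coefficient in your representation of $f(t)$ should be $\|\hat F(\sigma)\|_{L^2_x}$, not $\hat F(\sigma)$. Your treatment of the restriction spaces is also the right way to make the interpolation step rigorous: you take near-optimal extensions to $\R$ and interpolate the weighted $L^2$ spaces there.
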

\begin{proof}
    Once again it, for $p=1$ it is just Cauchy Schwartz, this means that we only need to prove it for $p=2$ as the rest is arrived at by interpolation. Write
    \begin{equation*}
        fg=\sum_{n,m}\int d\lambda d\mu \widehat{f_n}(\lambda+|n|^2)\widehat{g_m}(\mu+|m|^2)e^{i(n+m)\cdot x}e^{i(|n|^2+|m|^2+\lambda+\mu)t}
    \end{equation*}
    and thus control for $b>1/2$
    \begin{align*}
        \left\|f g\right\|_{L^2(\T^2\times I_\tau)}^2& \leeq\int d\lambda d\mu \frac{\left\|f\right\|_{X_{0,b}}\left\|g\right\|_{X_{0,b}}}{(\lambda\mu)^{2b}} \big\| \sum_{n,m} \widehat{f_n}(\lambda+|n|^2)\widehat{g_m}(\mu+|m|^2)e^{i(n+m)\cdot x}e^{i(|n|^2+|m|^2)t}\big\|\\
        & \leeq(N\wedge M)^\varepsilon \left\|f\right\|_{X_{0,b}}^2\left\|g\right\|_{X_{0,b}}^2.
    \end{align*}
\end{proof}
\subsection{Wiener measure and probabilistic estimates}\label{sect}
Now, introduce the probabilistic tools that we use in this paper. First we talk about the notion of Wiener measure. We will give a limited definition that suffices for our purposes, but a more in depth construction and explanation may be found in chapter 5 of \cite{derezinski2013mathematics}.
\begin{defn}
    Let $\HH$ be a hilbert space, $(A,\D(A))>0$, $\D(A)\subseteq \HH$, be a self adjoint operator such that $A^{-1}$ bounded and compact. Take $D^{-1}$ a trace class operator. Let $\{e_n\}_{n\in\N}$ and $\{\lambda_n\}_{n\in\N}$ be an orthonormal basis of eigenfunctions and its corresponding eigenvalues, let $\{g_n^w\}_{n\in\N}$ be independent and identically distributed complex gaussian variables centered in $0$ and of variance $1$, the following random variable 
    \begin{equation*}
        \Gg_{A}:\omega \longmapsto \sum_n \frac{g_n^\omega}{(\lambda_n)^{1/2}}e_n
    \end{equation*}
    defines a measure (through pullback) on $D^{1/2}\HH$ that is equivalent to the centered gaussian measure defined by taking $A^{-1}$ as the covariance. This measure corresponds to the Wiener measure associated to $A$ and we will note\footnote{There is no canonical choice of $D$} it $\rho_A=e^{-1/2\la\cdot,A^{-1}\cdot\ra_{\HH}}$.
\end{defn}
This gaussian measure endows us with some very useful estimates. For example taking $\HH=\Pi_{|n|\sim N}H^{0^-}$ and $A=\Pi_{|n|\sim N}(-\Delta+1)$ we get the following estimate due to hypercontractivity estimates on subgaussian random variables and Weyl's asymptotic estimates
\begin{equation*}
\mathbb{P}\left(\left\|\sum_{|n|\sim N} \frac{g_n^\omega}{(1+|n|^2)^{1/2}}e_n\right\|_2>t\right)\leq \exp(-t^2/(2\tr(A^{-1}))\leeq \exp(-t^2/(2N^\varepsilon)).
\end{equation*}
In the rest of the paper when we allude to controls granted by restricting $\omega$,  we are using the inequalities like the precedent one. The Wiener measure is somewhat stable under perturbation which will allow us to extend arguments from the "diagonalized" operator to $-\Delta+V$.
\begin{prop}[Feldman–Hájek]\label{prop:absolutecontinuitywiener}
    Let $\HH$ be a hilbert space, $D^{-1}$ be a trace class operator and let $A,B>0$ self adjoint operators such that $A^{-1}$ and $B^{-1}$ are bounded. We have that $\rho_A$ and $\rho_B$, defined on $D^{1/2}\HH$, are absolutely continuous with respect to each other if and only if the operator $A^{1/2}B^{-1}A^{1/2}-1$ is Hilbert–Schmidt on $\HH$. Furthermore if $\{\Pi_n\}_{n\in \N}$ is an increasing sequence of finite rank orthogonal projections in $\HH$ with strong limit $1$ we have that the Radon–Nikodym derivative , $\frac{d\rho_B}{d\rho_A}$, is equal to:
    \begin{equation}
        F(x)= \lim_{n\to+\infty}\det(\Pi_nA^{1/2}B^{-1}A^{1/2}\Pi_n)^{-1/2}\exp\Big(1/2\la x,\Pi_n(1-A^{-1/2}BA^{-1/2})\Pi_nx\ra_{\HH}\Big) 
    \end{equation}
    the limit taken in $L^1(D^{1/2}\HH,d\rho_A)$, and $F \geq 0$ $\|F\|_{L^1}=1$
\end{prop}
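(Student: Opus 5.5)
The plan is to reduce everything to finite dimensions, where both measures are genuine Gaussians with explicit densities, and then pass to the limit using a Hilbert--Schmidt criterion. Set $T:=A^{1/2}B^{-1}A^{1/2}$, a positive self-adjoint operator on $\HH$; then $\rho_B$ is the pushforward of $\rho_A$ under $A^{-1/2}\,T^{1/2}A^{1/2}$ (formally). For a finite rank projection $\Pi_n$, the projected measures $(\Pi_n)_*\rho_A$ and $(\Pi_n)_*\rho_B$ are Gaussians on $\Pi_n\HH$, with explicit Lebesgue densities. The ratio of these densities is exactly
\[
F_n(x)=\det(\Pi_nT\Pi_n)^{-1/2}\exp\Big(\tfrac12\la x,\Pi_n(1-A^{-1/2}BA^{-1/2})\Pi_n x\ra_{\HH}\Big),
\]
once the pairing is read in the coordinates $\Pi_n A^{-1/2}$ adapted to $\rho_A$. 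In this normalisation $\rho_A$ becomes the standard (white-noise) Gaussian and $\rho_B$ the Gaussian with covariance $T$. So $F_n\ge0$ and $\int F_n\,d\rho_A=1$ by construction, and $F_n=\E_{\rho_A}[F\mid\Pi_n x]$ whenever $\rho_B\ll\rho_A$. The sequence $(F_n)$ is therefore a nonnegative $\rho_A$-martingale with respect to the filtration generated by $\Pi_n x$; this filtration generates the full $\sigma$-algebra because $\Pi_n\to1$ strongly.

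\textbf{Step 1 (sufficiency).} Assume $K:=T-1$ is Hilbert--Schmidt. Diagonalise $T$ with eigenvalues $1+k_j$, $\sum k_j^2<\infty$, and write $x$ in that basis with i.i.d.\ standard coordinates $\xi_j$ under $\rho_A$. The density becomes a product of one-dimensional factors
\[
(1+k_j)^{-1/2}\exp\Big(\tfrac12\tfrac{k_j}{1+k_j}|\xi_j|^2\Big).
\]
Kakutani's theorem on product measures gives equivalence iff the Hellinger integrals satisfy $\prod_j H_j>0$. A direct computation gives $1-H_j\asymp k_j^2$, so $\sum k_j^2<\infty$ suffices. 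Uniform integrability of the martingale then follows, since the Hellinger bound controls $F_n$ in $L^{1}\log L$ (or one checks $L^{1+\delta}$ locally). Martingale convergence gives $F_n\to F$ in $L^1(\rho_A)$ with $F=d\rho_B/d\rho_A$. For a general increasing $\Pi_n$, not adapted to $T$'s eigenbasis, the same conclusion holds because the limit of a uniformly integrable martingale is the conditional-expectation limit, which does not depend on the choice of filtration once the limit $\sigma$-algebra is full.

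\textbf{Step 2 (necessity).} If $K$ is not Hilbert--Schmidt, either $T$ has spectrum accumulating away from $1$ (non-compact $K$), or $\sum k_j^2=\infty$. In both cases I would exhibit a quadratic statistic, such as $\frac{1}{m}\sum_{j\le m}(|\xi_j|^2-1)$ or a weighted version $\sum_j k_j(|\xi_j|^2-1)/(\sum_{j\le m}k_j^2)$, whose law of large numbers gives different almost sure limits under $\rho_A$ and $\rho_B$. This forces mutual singularity, which is the Feldman--H\'ajek dichotomy.

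\textbf{Main obstacle.} The delicate point is the infinite-dimensional bookkeeping: $T$ need not be diagonal in the basis of $\Pi_n$, and $\rho_A,\rho_B$ live on the larger space $D^{1/2}\HH$. I would handle this by working with the isonormal representation in which $\rho_A$ is white noise on $\HH$, and by proving the Hellinger/Kakutani estimate in a basis-free way via $\det_2(1+K)$. The key identity is $\E_{\rho_A}[F_n^{1/2}]=\det(\Pi_n T\Pi_n)^{1/4}\det\big(\tfrac{1+\Pi_nT\Pi_n}{2}\big)^{-1/2}$, whose limit is positive exactly when $K$ is Hilbert--Schmidt.
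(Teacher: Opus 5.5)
The paper gives no proof of this proposition; it only cites Derezi\'nski--G\'erard. Judged on its own, your argument has a genuine gap at its load-bearing step. You assert that the ratio of the projected Gaussian densities is \emph{exactly} the $F_n$ of the statement, and from this you deduce $\int F_n\,d\rho_A=1$, $F_n=\E_{\rho_A}[F\mid\Pi_n x]$, and the martingale property. This is false unless $\Pi_n$ commutes with $T=A^{1/2}B^{-1}A^{1/2}$. In your white-noise coordinates $y$, the law of $\Pi_n y$ under $\rho_B$ is the Gaussian on $\Pi_n\HH$ with covariance $P_n:=\Pi_nT\Pi_n$. So the true conditional density is
\[
G_n=\det(P_n)^{-1/2}\exp\big(\tfrac12\la y,(1-P_n^{-1})y\ra\big),
\]
which involves the inverse of the compression. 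The statement instead has the compression of the inverse, $\Pi_nT^{-1}\Pi_n$. Set $Q_n=\Pi_nT(1-\Pi_n)$ and $R_n=(1-\Pi_n)T(1-\Pi_n)$. The Schur complement formula gives $\Pi_nT^{-1}\Pi_n=(P_n-Q_nR_n^{-1}Q_n^*)^{-1}\ge P_n^{-1}$ on $\Pi_n\HH$, with equality iff $Q_n=0$. Consequently $\int F_n\,d\rho_A=[\det(P_n)\det(\Pi_nT^{-1}\Pi_n)]^{-1/2}<1$ whenever $Q_n\neq0$ (determinants on $\Pi_n\HH$), and $(F_n)$ is not a martingale. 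Your Kakutani computation is done in the eigenbasis of $T$, where the two expressions coincide. Your ``independence of the filtration'' transfer is valid for $G_n$, but not for the statement's $F_n$. Likewise, your key identity for $\E[F_n^{1/2}]$ is the Hellinger integral of $G_n$, not of $F_n$. This case is not marginal: in the paper, $\Pi_{|n|\le N}$ does not commute with $(-\Delta+V^r)^{1/2}(-\Delta+V)^{-1}(-\Delta+V^r)^{1/2}$.

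The missing step is a comparison that uses the Hilbert--Schmidt hypothesis a second time.
\begin{itemize}
\item Write $F_n=G_n\exp(-\frac12\la y,E_ny\ra)$ with $E_n:=\Pi_nT^{-1}\Pi_n-P_n^{-1}\ge0$.
\item From $E_n=(P_n-S_n)^{-1}S_nP_n^{-1}$ with $S_n=Q_nR_n^{-1}Q_n^*$, you get $\tr E_n\le\|T^{-1}\|^3\|Q_n\|_{HS}^2$. Here $\|T^{-1}\|<\infty$ because $T=1+K>0$ with $K$ compact.
\item Since $Q_n=\Pi_nK(1-\Pi_n)$, we have $\|Q_n\|_{HS}\le\|(1-\Pi_n)K\|_{HS}\to0$ by dominated convergence.
\item Thus $\la y,E_ny\ra\to0$ in $L^1(\rho_A)$. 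Then $0\le G_n-F_n\le G_n\big(1-e^{-\frac12\la y,E_ny\ra}\big)\to0$ in $L^1(\rho_A)$ by uniform integrability of $(G_n)$.
\end{itemize}
So $F_n\to F$ in $L^1$, and the normalisation $\|F\|_{L^1}=1$ holds only in the limit. This uniform integrability is automatic once Kakutani gives $\rho_B\ll\rho_A$, since $G_n=\E[F\mid\Pi_n y]$ is then a closed martingale; your $L\log L$ remark is unnecessary. Finally, in the necessity direction with $K$ non-compact, $T$ need not have an eigenbasis, so your coordinates $\xi_j$ are not available. Use instead an infinite-dimensional spectral subspace of $T$ near a point $\lambda\neq1$ of the essential spectrum, with a variance bound replacing the i.i.d.\ law of large numbers.
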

For a proof see chapter 11 \cite{derezinski2013mathematics}.
\section{Pseudo-differential "diagonalization" of the potential. and initial data}\label{sect:Diagonaliation_2}
We now introduce a pseudo-differential operator, $-\Delta+V^r$, that will allow us to establish local well posedness of the NLSE. We also show that the Wiener measures $\rho_{-\Delta+V}$ and $\rho_{-\Delta+V^r}$ are absolutely continuous with respect to each other which will allow us to use the LWP shown on the $-\Delta+V^r$ and the flow conserved measure properties of $-\Delta+V$. We first introduce the following partition of $\Z^2$ due to \cite{Bourgain1999}.
\subsection{Construction of $V^r$}
\begin{lem}\label{lem:Partition}
    For all $0<\rho<1/10$ there exist a $\gamma>0$, $0<\delta<\rho$ and a partition of $\Z^2$, $\{\Omega_\alpha\}_{\alpha\in A}$ such that:
    \begin{enumerate}
        \item $\forall\alpha\in A$ and $n,n'\in \Omega_\alpha$ $|n-n'|+||n|^2-|n'|^2|<\gamma+|n|^\rho$ 
        \item $\forall\alpha,\beta\in A$ such that $\alpha \neq \beta$ and $n\in \Omega_\alpha, n'\in \Omega_\beta$ $|n-n'|+||n|^2-|n'|^2|>|n|^{\delta}$
    \end{enumerate}
\end{lem}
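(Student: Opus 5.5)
The plan is to build the partition as the connected components of a graph on $\Z^2$, using Bourgain's strip-intersection argument, and to choose $\delta$ small relative to $\rho$ at the end.

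\textbf{Construction.} Fix a small $\delta$ (to be tuned against $\rho$) and declare $n\leftrightarrow n'$ if
\[|n-n'|+\big||n|^2-|n'|^2\big|\le \max(|n|,|n'|)^{\delta}.\]
Let $\{\Omega_\alpha\}$ be the equivalence classes of the transitive closure, i.e. points joined by finite chains of such steps. Property (2) is then automatic: if $n\in\Omega_\alpha$, $n'\in\Omega_\beta$ with $\alpha\ne\beta$, they are not adjacent, so $|n-n'|+||n|^2-|n'|^2|>\max(|n|,|n'|)^\delta\ge |n|^\delta$. All the work is in property (1), i.e. showing that each class has diameter (in both $|n-n'|$ and $||n|^2-|n'|^2|$) at most $\gamma+|n|^\rho$. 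The constant $\gamma$ absorbs the finitely many classes meeting a fixed ball $|n|\le R_0$.

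\textbf{Local structure of a step.} A step $n\to n+k$ with $|n|\sim R$ large forces $|k|\le R^{\delta}$ and $|2n\cdot k+|k|^2|\le R^\delta$, hence $|n\cdot k|\lesssim R^{2\delta}$. The key lemma, Bourgain's observation, is that all admissible steps at $n$ are parallel to a single primitive vector. Indeed, if $k_1,k_2$ were linearly independent admissible steps, then $n$ would lie in the intersection of two strips $\{|x\cdot k_i|\lesssim R^{2\delta}\}$. Solving the $2\times 2$ system with $|\det(k_1,k_2)|\ge 1$ gives
\[|n|\lesssim R^{2\delta}(|k_1|+|k_2|)\lesssim R^{3\delta},\]
which contradicts $|n|\sim R$ once $3\delta<1$ and $R$ is large.

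\textbf{Propagation along a class.} Next I would propagate this along a chain. As long as the chain stays in a region where $|n|\sim R$ with $|\cdot|$ changing by at most $R^{C\delta}$, the same argument applied to differences $k_1,k_2$ of points of the cluster of size $\le R^{C\delta}$ shows that the whole cluster lies on one line $n_0+\Z k$. Along this line, $|n_0+tk|^2$ is a quadratic in $t$ with leading coefficient $|k|^2\ge 1$. Consecutive points must have $|2(n_0+tk)\cdot k+|k|^2|\le R^\delta$, which confines $t$ to an interval of length $\lesssim R^{\delta}/|k|^2$. Hence the chain has at most $R^{\delta}$ points. Each step has size at most $R^\delta$, so the total displacement is $\lesssim R^{2\delta}$ and the total variation of $|n|^2$ is $\lesssim R^{2\delta}$.

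A bootstrap/continuity argument on the chain length closes the a priori assumption that the cluster stays at scale $R^{C\delta}$: suppose the first exit from the ball of radius $R^{C\delta}$ and derive a contradiction. The result is diameter $\lesssim R^{3\delta}$, so choosing $3\delta<\rho$ (and $\gamma$ large for the small $|n|$) yields (1).

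\textbf{Main obstacle.} I expect the hard step to be this bootstrap, namely making rigorous that a chain cannot slowly drift by changing direction. The point is that any two differences inside a cluster of radius $R^{C\delta}$ are themselves short vectors nearly orthogonal to $n$, so the strip-intersection argument applies to them too and not only to single steps. This is where the exponent bookkeeping $C\delta<\rho$, $C\delta<1$ must be done carefully.
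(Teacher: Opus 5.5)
The paper gives no proof of this lemma; it is simply quoted from Bourgain. Your argument is the standard proof from that source and is correct: equivalence classes of the closeness relation, the Cramer/strip-intersection bound forcing all short, nearly tangent differences from $n_0$ to be collinear when $|n_0|$ is large, the one-dimensional count along $n_0+\Z k$, and $\delta$ chosen small against $\rho$ at the end.

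Two details to tighten. First, run the bootstrap on the length of a \emph{simple} chain, since that is what bounds $||n_j|^2-|n_0|^2|$ and hence $|n_0\cdot(n_j-n_0)|$; your ball-exit version also works, because a simple chain inside a ball of radius $R^{C\delta}$ has $\lesssim R^{2C\delta}$ steps. Second, on the line the step condition $t\to t'$ confines $t+t'$, not $t$, to an interval of length $\lesssim R^{\delta}/|k|^2$; together with $|t-t'|\le R^{\delta}/|k|$ this still leaves only $\lesssim R^{\delta}$ admissible points.
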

\begin{rem}
    The $\rho$ and $\delta$ are the limiting factors when we want to gain regularity. Meaning that if we where to get a higher $\delta$ and a lower $\rho$ the difference between the linear evolution and the solution would be shown to be more regular and we would be able to approximate $s\le 1/2$.
\end{rem}
Denote in the following for $\alpha\in A$
\begin{equation*}
    \Pi_\alpha := \sum_{n\in\Omega_\alpha}\Pi_n.
\end{equation*}
We can give a good description of such sets.
\begin{prop}\label{prop:descripcion_Omega}
    Let $\{\Omega_\alpha\}_{\alpha\in A}$ be a partition of $\Z^2$ as described above, for a given $\alpha$ choose a element $m\in \Omega_\alpha$ such that $|m|\leq |k|$ for all $k\in \Omega_\alpha$, we have that for $|m|$ big enough that any element $k\in\Omega_\alpha$ can be written as:
    \begin{equation}
        k=m+\lambda_1 u_1+\lambda_2 u_2 \ \ \ \ \ (\lambda_1,\lambda_2)\in[-|m|^{\rho-1},|m|^{\rho-1}]\times[-2|m|^{\rho/2},2|m|^{\rho/2}]
    \end{equation}
    Where $u_1=m/|m|$ and $u_2$ is a unitary vector that completes an orthonormal basis of $\R^2$ together with $u_1$.
\end{prop}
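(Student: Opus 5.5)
Writing $k\in\Omega_\alpha$ in the orthonormal basis $(u_1,u_2)$ as $k=m+\lambda_1u_1+\lambda_2u_2$, the proof only uses property (1) of Lemma \ref{lem:Partition} together with the minimality of $|m|$ in $\Omega_\alpha$. The argument compares the two constraints coming from (1), namely $|k-m|<\gamma+|m|^\rho$ and $\big||k|^2-|m|^2\big|<\gamma+|m|^\rho$, and uses the minimality $|k|\ge|m|$ to fix signs.

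\textbf{Step 1: the transversal coordinate $\lambda_2$.} From (1), $|\lambda_1|^2+|\lambda_2|^2=|k-m|^2<(\gamma+|m|^\rho)^2$, so a priori $|\lambda_i|\lesssim |m|^\rho$. Next expand
\[
|k|^2-|m|^2 = 2|m|\lambda_1+\lambda_1^2+\lambda_2^2 .
\]
Minimality gives $|k|^2-|m|^2\ge 0$, and (1) gives $|k|^2-|m|^2<\gamma+|m|^\rho$. Hence $2|m|\lambda_1+\lambda_1^2+\lambda_2^2\in[0,\gamma+|m|^\rho)$. The term $2|m|\lambda_1$ can be negative, so $\lambda_2$ cannot be bounded from this alone; Step 2 handles this.

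\textbf{Step 2: the radial coordinate $\lambda_1$.} If $\lambda_1\ge 0$, every term is nonnegative. This immediately gives $\lambda_2^2<\gamma+|m|^\rho$, hence $|\lambda_2|\le 2|m|^{\rho/2}$ for $|m|$ large. It also gives $2|m|\lambda_1<\gamma+|m|^\rho$, hence $0\le\lambda_1\lesssim |m|^{\rho-1}$.

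If $\lambda_1<0$, the nonnegativity gives $\lambda_2^2\ge 2|m||\lambda_1|-\lambda_1^2$. Since $|\lambda_1|\lesssim|m|^\rho\ll|m|$, we have $\lambda_1^2\ll |m||\lambda_1|$. This only controls $|\lambda_1|$ by $\lambda_2^2/|m|$, so an independent bound on $\lambda_2$ is still required.

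\textbf{Step 3: closing the case $\lambda_1<0$, the main obstacle.} Here the disk of radius $\sim|m|^\rho$ around $m$ and the annulus $|m|\le|k|<\sqrt{|m|^2+|m|^\rho}$ intersect in a crescent. Its tangential extent is $|m|^\rho$, not $|m|^{\rho/2}$, so the claimed box cannot follow from (1) alone. I would first extract the stronger information from the construction of the partition in \cite{Bourgain1999}. There the $\Omega_\alpha$ are built as connected components (chains) of the relation $|n-n'|+||n|^2-|n'|^2|<|n|^\delta$. Bourgain's argument shows each cluster lies in a set where both $|k-m|$ and $||k|^2-|m|^2|$ are bounded by $|m|^{\rho}$, and where the cluster is essentially one-dimensional inside a $|m|^{\rho/2}$-wide strip.

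Failing that, I would use the freedom to choose the reference point. Replace $m$ by the element of $\Omega_\alpha$ minimizing $|k|$; if ties occur, take the one at an extremal angle, which forces $\lambda_1\ge 0$ up to $O(|m|^{\rho-1})$ errors, so that the case of Step 2 applies. Concretely, I would show that any $k$ with $\lambda_1<-C|m|^{\rho-1}$ satisfies $|k|<|m|$, contradicting minimality. This requires checking that $\lambda_2^2$ cannot compensate, which is precisely where the $|m|^{\rho/2}$ bound on $\lambda_2$ must first be established. I expect this circularity between the two coordinates to be the delicate point. I would break it by bootstrapping from the crude bound $|\lambda_2|\lesssim|m|^\rho$, improving it at each step to $|m|^{\rho/2}$ via the expansion in Step 1.
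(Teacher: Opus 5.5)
\textbf{The gap is in Step 3.} The case $\lambda_1<0$ is never closed, and none of your three proposed fixes can close it.
\begin{itemize}
\item Breaking ties among points of minimal modulus does not make $\lambda_1\ge 0$ for the remaining points.
\item The claim that $\lambda_1<-C|m|^{\rho-1}$ forces $|k|<|m|$ is false. Indeed $|k|^2-|m|^2=2|m|\lambda_1+\lambda_1^2+\lambda_2^2\ge 0$ as soon as $\lambda_2^2\ge 2|m||\lambda_1|$, for example for any $k$ with $|k|=|m|$.
\item The bootstrap has nothing to feed on. This expansion is the only relation between $\lambda_1$ and $\lambda_2$, and it is compatible with $\lambda_2\sim|m|^{\rho}$, $\lambda_1\approx-\lambda_2^2/(2|m|)$.
\end{itemize}
Appealing to unstated features of Bourgain's construction is not available either. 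The proposition is about any partition with properties (1)--(2) of Lemma \ref{lem:Partition}.

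\textbf{The statement itself fails.} The obstruction you identified is genuine: under these hypotheses (properties (1)--(2) and minimality of $|m|$) the statement is false.
\begin{itemize}
\item For large $K$ put $j=\lfloor K^{\rho}/4\rfloor$ and $p_\pm=(\pm j,K)$. Then $|p_+|=|p_-|$ and $|p_+-p_-|=2j<|p_\pm|^{\rho}$, so $\{p_-,p_+\}$ satisfies (1).
\item Each $p_\pm$ is isolated in the sense of (2). If $q=p_-+(a,b)\neq p_-$ with $|(a,b)|\le 2K^{\delta}$, then $|q|^2-|p_-|^2=2Kb+b^2-2ja+a^2$. Its modulus is $\ge K$ if $b\neq 0$, and $\ge 2j-|a|\gtrsim K^{\rho}$ if $b=0$. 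The same holds for $p_+$.
\item Hence, in any admissible partition, removing $p_\pm$ from their clusters and adding the cluster $\{p_-,p_+\}$ preserves (1) and (2). This can be done along a sequence $K\to\infty$.
\item With $m=p_-$ and $k=p_+$ one gets $k-m=(2j,0)$, so $\lambda_1=-2j^2/|m|\approx -K^{2\rho-1}/8$ and $|\lambda_2|=2jK/|m|\approx K^{\rho}/2$. Both bounds of the box are violated.
\end{itemize}

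\textbf{Comparison with the paper.} The paper's proof has exactly the hole you found. Immediately after the two identities it asserts $|\lambda_1|,|\lambda_2|\le 2|m|^{\rho/2}$, which only follows when $\lambda_1\ge 0$. It then uses this bound to treat $\lambda_1\le 0$. Your Steps 1--2 reproduce its valid part.

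\textbf{What can be salvaged.} Property (1) and minimality do give collinearity. For $w=k-m$ one has $|\langle w,u_1\rangle|\lesssim |m|^{2\rho-1}$. So for two such vectors the integer $\det(w,w')$ is $O(|m|^{3\rho-1})$, hence $0$, and $\Omega_\alpha$ lies on a single lattice line through $m$. The example shows that this segment can still have length $\sim |m|^{\rho}$. Obtaining the $|m|^{\rho/2}$ box requires an extra structural hypothesis on how the clusters are built (for instance chain-connectedness at a small enough scale). Lemma \ref{lem:Partition} as stated does not provide this.
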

This means that wee can see the $\Omega_{\alpha}$ as tangent segments to the circle with center at the origin and of length $|m|^{\rho}$ as seen in the following figure.
\begin{figure}[h]
\centering
\begin{tikzpicture}
\draw[step=0.25cm,gray,very thin] (-2.9,-2.9) grid (3.9,3.9);
\filldraw[black] (-2,-2) circle (2pt)node[anchor=west]{(0,0)};
\filldraw[black] (2,2) circle (2pt)node[anchor=west]{m};
\draw [black,thick,domain=0:90] plot ({-2+5.656*cos(\x)}, {-2+5.656*sin(\x)});
\draw [black,very thick,domain=-1:1] plot ({2+\x}, {2-\x});
\end{tikzpicture}
\end{figure}

\begin{proof}
    The fact that we can choose an $m$ comes from $\Omega_\alpha$ being finite in size and by construction we can write uniquely $k$ as $m+\lambda_1 u_1+\lambda_2 u_2$. We are to proof that $(\lambda_1,\lambda_2)\in[-|m|^{\rho-1},|m|^{\rho-1}]\times[-|m|^{\rho/2},|m|^{\rho/2}]$. We have (for $|m|$ big enough) that for $k\in \Omega_\alpha$ that $|k-m|+||k|^2-|m|^2|\leq2|m|^\rho$. On the other hand $|k-m|^2=\lambda_1^2+\lambda^2_2$ and $|k|^2-|m|^2=||m|+\lambda_1|^2+\lambda^2_2-|m^2|=\lambda_1^2+\lambda^2_2+2\lambda_1|m|$. This means first that $|\lambda_1|,|\lambda_2|\leq 2|m|^{\rho/2}$. Now let us look at two different cases, first take $\lambda_1>0$ in that case we have that $\lambda_1|m|\leq |m|^\rho$. If we have that\footnote{This case boils down to estimates on the asymptotic behavior of short enough ring segments.} $\lambda_1\leq 0$ then we have that by choice of $k$ that $|k|^2=||m|+\lambda_1|^2+\lambda^2_2\geq |m|^2 $ implies that $|m|^\rho\geq |\lambda_1||m|$ concluding the argument.
\end{proof}
\begin{rem}\label{rem:buen indice de omega}
    This together with the fact presented in \cite{1077308005} that the number of points in $\Z^2$ on an arc of a circle of radius $R$ and length smaller than $CR^{1/3}$ is at most two, means that we can index (up to duplicates) the elements in $\Omega_\alpha$ by their modulus. It also means that the map $\lambda_1\mapsto \lambda_2$ is bijective; 
\end{rem}
We define, for $\sigma\in\R$, $\Ll^\sigma$  the space of operators, $Q:\CC^\infty(\T^2)\rightarrow \D'(\T^2)$ such that for all $M\in \N$ there exists a $C_M\in \R^+$ that satisfies for all $n,n'\in \Z^d$:
\begin{equation*}
    \Big\|\Pi_n Q \Pi_{n'}\Big\|_{\Ll(L^2(\T^2))}\le C_M\la|n|+|n'|\ra^{\sigma}\la n-n'\ra^{-M}
\end{equation*}
and\footnote{In the sense that $Q$ admits a "constant" extension to operate on $\CC^\infty(\R_t\times \T^2)$} $[Q,\partial_t]=0$. Clearly we have that if $Q\in \Ll^\sigma $ then $Q$ is a bounded operator fro $H^s\to H^{s+\sigma}$ for $s\in\R$. The following proposition is an adaptation of the arguments presented in \cite{equaequivalente}. In the following fix $1/10>\rho>0$ and take $0< \delta<\rho$ satisfying lemma \ref{lem:Partition}.
\begin{prop}\label{prop:comutadorlaplmasv}
For some $\delta>0$ and $V\in H^{2^+}(\T^2)$ there exists $Q\in \Ll^{-\delta} $, $\|Q\|_{\Ll(L^2(\T^2))}<1$ such that
\begin{equation*}
    (1+Q)(i\partial_t-\Delta+V)-(i\partial_t-\Delta+V^r)(1+Q)=R.
\end{equation*}
    Where $R\in \Ll^{-\delta}$ and letting $|M(\alpha)|$ the upper bound of $\Omega_\alpha$:
\begin{equation*}
V^r:= \Big(\sum_{\substack{\alpha, \\M_\alpha^{\delta}<C\|V\|^2_{H^{2^+}}}}\Pi_\alpha \Big) V \Big(\sum_{\substack{\alpha, \\M_\alpha^{\delta}<C\|V\|^2_{H^{2^+}}}}\Pi_\alpha \Big) +\sum_{\substack{\alpha, \\M_\alpha^{\delta}\ge C\|V\|^2_{H^{2^+}}}}\Pi_\alpha V \ \Pi_\alpha. 
\end{equation*}
\end{prop}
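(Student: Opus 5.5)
Expanding the left-hand side, the $i\partial_t$ terms cancel because $[Q,\partial_t]=0$. The identity therefore reduces to a purely spatial one:
\begin{equation*}
R = [Q,-\Delta] + (V-V^r) + QV - V^rQ .
\end{equation*}
I would pick $Q$ to solve the homological equation $[-\Delta,Q]=V-V^r$ up to an error in $\Ll^{-\delta}$. All remaining terms, $QV-V^rQ$, are then products of an element of $\Ll^{-\delta}$ with a bounded operator of $\Ll^0$ type. Since $V\in H^{2^+}$, the Fourier coefficients satisfy $|\hat V(k)|\lesssim\la k\ra^{-2-}$, which gives $\|\Pi_n V\Pi_{n'}\|\lesssim \la n-n'\ra^{-2-}$. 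This is not rapid decay, so I would either work with the finite-decay version of $\Ll^\sigma$ that the proof actually needs (decay $\la n-n'\ra^{-2-}$, enough for Schur's test on $\Z^2$), or check that the products stay in the class by summing over intermediate frequencies.

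For the construction, write $V-V^r=\sum_{n,n'}\Pi_nV\Pi_{n'}$, where the sum runs over pairs $(n,n')$ lying in distinct blocks $\Omega_\alpha\neq\Omega_\beta$, both with $M^\delta\ge C\|V\|^2_{H^{2^+}}$. Pairs with both indices in the low region are kept inside $V^r$, and pairs with one low and one high index are harmless once $C$ is chosen. I then set
\begin{equation*}
\Pi_n Q\Pi_{n'} := \frac{\Pi_nV\Pi_{n'}}{|n|^2-|n'|^2} \quad\text{when } \big||n|^2-|n'|^2\big|\ge \tfrac12 |n|^{\delta},
\end{equation*}
which solves the commutator equation exactly on those entries. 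On the remaining pairs, property (2) of Lemma \ref{lem:Partition} forces $|n-n'|>\tfrac12|n|^\delta$, so the coefficient $\la n-n'\ra^{-2-}\lesssim |n|^{-\delta}\la n-n'\ra^{-1-}$ already gains $|n|^{-\delta}$. I leave those entries in $R$ and set $Q=0$ there.

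To estimate $Q$ on the entries where it is defined, I split into two cases. If $|n-n'|\ll |n|$, then $|n|\sim|n'|$, the denominator is at least $\tfrac12|n|^\delta\sim \la|n|+|n'|\ra^{\delta}$, and we get $Q\in\Ll^{-\delta}$. If $|n-n'|\gtrsim |n|$, the decay of $\hat V$ alone gives the factor $\la|n|+|n'|\ra^{-\delta}$. The norm condition $\|Q\|_{\Ll(L^2)}<1$ comes from the threshold $M^\delta\ge C\|V\|^2$: every nonzero entry of $Q$ has size $\lesssim \|V\|_{H^{2^+}}M^{-\delta}\la n-n'\ra^{-2-}$, so Schur's test bounds $\|Q\|$ by $\|V\|/\sqrt{C}\cdot\|V\|^{-1}\cdot C'$, which is less than $1$ for $C$ large.

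It remains to check $R\in\Ll^{-\delta}$. The commutator error consists exactly of the dropped entries, which were shown above to lie in $\Ll^{-\delta}$. The terms $QV$ and $V^rQ$ lie in $\Ll^{-\delta}$ because composing matrices with entries $\la|n|+|n'|\ra^{-\delta}\la n-n'\ra^{-2-}$ and $\la n'-n''\ra^{-2-}$ preserves the structure: we use $\la n-n''\ra\lesssim\la n-n'\ra\la n'-n''\ra$ and compare $\la|n|+|n'|\ra^{-\delta}$ with $\la|n|+|n''|\ra^{-\delta}$, losing at most a factor $\la n'-n''\ra^{\delta}$, which the decay absorbs. The main obstacle is this bookkeeping of finite decay against the weight loss, since the $H^{2^+}$ regularity leaves only $0^+$ room beyond summability in $\Z^2$. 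I would handle it by taking $\delta$ small relative to the excess of regularity over $2$.
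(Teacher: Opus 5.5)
Your proposal follows the paper's proof essentially step for step: essentially the same $Q$, obtained by dividing the off-block entries of $V-V^r$ by $|n|^2-|n'|^2$ above an energy threshold; the leftover near-resonant entries controlled through property (2) of Lemma \ref{lem:Partition}; and $QV-V^rQ$ handled by composition. You are also more careful than the paper in noticing that $V\in H^{2^+}$ only gives finite off-diagonal decay, so $\Ll^\sigma$ has to be read with finite decay.

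The one slip is on the dropped entries. Trading a full $\la n-n'\ra^{-1}$ for $|n|^{-\delta}$ leaves $\la n-n'\ra^{-1-}$, which is not summable on $\Z^2$, so it breaks your own Schur framework. There are two easy fixes. You can trade only $\la n-n'\ra^{-\theta}$ with $\theta<\varepsilon$, which gives the smaller exponent $\delta\theta$ (allowed by ``for some $\delta$''). Or you can use Cauchy--Schwarz with the $H^{2+\varepsilon}$ weight, $\sum_{|k|>|n|^{\delta}/2}|\hat V(k)|\lesssim |n|^{-\delta(1+\varepsilon)}\|V\|_{H^{2+\varepsilon}}$. This gives the full gain and, since $|n|\sim|n'|$ on these entries, boundedness $H^s\to H^{s+\delta}$ for every $s$.
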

\begin{proof}
It suffices to show that, noting $V^a=V-V^r$, the operator defined by 
\[\Pi_n Q \Pi_{n'}:=\frac{\mathbbm{1}_{||n|^2-|n'|^2|>(|n|+|n'|)^{\delta}/4}}{|n|^2-|n'|^2}\Pi_n V^a \Pi_{n'},\]
 satisfies our conditions. First we see that $Q\in \Ll^{-\delta} $ by construction. On the other hand the Sobolev embedding $H^{1+\varepsilon}(\T^2)\hookrightarrow L^\infty(\T^2) $ yields that
$\|Q\|_{\Ll(L^2(\T^2))}<1$ as $\|\Pi_{|n|>N}V\|_\infty\leeq N^{-1}\|V\|_{H^{2+\varepsilon}}^2$. We are left with showing that:
\begin{equation*}
    (1+Q)(i\partial_t-\Delta+V)-(i\partial_t-\Delta+V^r)(1+Q)\in \Ll^{-\delta},
\end{equation*}
which is reduced to showing that
\begin{equation*}
    V^a+[Q,-\Delta]+QV-V^rQ\in  \Ll^{-\delta}.
\end{equation*}
On the one hand we clearly have that $\Ll^{\sigma_1}\circ\Ll^{\sigma_2}\subseteq\Ll^{\sigma_1+\sigma_2}$ yielding that $QV-V^rQ\in  \Ll^{-\delta}$. Finally we study for $(|n|+|n'|)^{\delta}>\|V\|_{H^{2^+}}$
\[\Pi_n\big(V^a+[Q,-\Delta]\Big)\Pi_{n'}.\]
Start by realizing that this expression is $0$ if $n,n'\in \Omega_\alpha$. If $||n|^2-|n'|^2|>(|n|+|n'|)^{\delta}/4$ then $\Pi_n[Q,-\Delta]\Pi_{n'}=\Pi_nV^a \Pi_{n'}$ and thus $\Pi_n\big(V^a+[Q,-\Delta]\Big)\Pi_{n'}=0$, on the other hand if $n\in\Omega_\alpha,n'\in \Omega_\beta$, $\alpha\neq\beta$ and $||n|^2-|n'|^2|\le(|n|+|n'|)^{\delta}/4$ then $|n-n'|>(|n|+|n'|)^{\delta}/2$ and thus $\|\Pi_nV^a \Pi_{n'}\|_{\Ll(L^2(\T^2))}\le C(V)(|n|+|n'|)^{\delta}$ which concludes the proof.
\end{proof}
Taking advantage of the description of the $\Omega_\alpha$ we can study the operator $-\Delta+V^r$ its eigenfunctions. We have thanks to Rellich-Kato that $-\Delta+V^r$ is self adjoint. On the other hand we have that both $-\Delta$ and $-\Delta+V^r$ commute with the family of projectors $\{\Pi_\alpha\}_{\alpha\in A}$ and thus thanks to the min-max formula we have that there is a hilbertian basis of $L^2(\T^2)$ of eigenfunctions of $-\Delta+V^r$, $\{e_n^r\}_{n\in\Z^2}$, with $\{\lambda_n^r\}_{n\in\Z^2}$ being their associated eigenvalues, such that there exists a constant $C(V)>|\lambda_n^r-|n|^2|$ and such that (provided $|n|$ is big enough) $\Pi_{\alpha(n)}e_n^r=e_n^r$ and for $\beta\neq \alpha(n)$ we get $\Pi_\beta e_n^r=0$. In the following fix such a basis, we will write the decomposition of its eigenvectors in Fourier as:
\begin{equation*}
e_n^r(x)=\sum_{k\in\Omega_{\alpha(n)}}\chi_n^ke^{ik\cdot  x}
\end{equation*}
and by orthonormality it follows that:
\begin{equation*}
    e^{in\cdot x}=\sum_{k\in\Omega_{\alpha(n)}}\overline{\chi_k^n}e_k^r(x)
\end{equation*}
The following lemma gives a good description of the $|\chi_n^k|$.
\begin{lem}\label{lem:control_e_f}
There exists a $C(V)$ such that for all $n\in\Z^2$:
\begin{align}
    \sum_{k\in\Omega_{\alpha(n)}} \la |n|^2-|k|^2 \ra^2\big|\chi_n^k\big|^2<C(V)\\
    \sum_{k\in\Omega_{\alpha(n)}} \la |n|^2-|k|^2 \ra^2\big|\chi_k^n\big|^2<C(V)
\end{align}
\end{lem}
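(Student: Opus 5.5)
The plan is to read off both bounds from the eigenvalue equation of $e_n^r$ restricted to the block $\Omega_{\alpha(n)}$, using that $V^r$ acts on $\Pi_{\alpha}L^2$ as the compression $\Pi_\alpha V\Pi_\alpha$, which is bounded. Fix $n$ with $|n|$ large; small $n$ lie in finitely many blocks and are absorbed into $C(V)$. Write $\alpha=\alpha(n)$. Since $e_n^r=\sum_{k\in\Omega_\alpha}\chi_n^k e^{ik\cdot x}$ and $(-\Delta+V^r)e_n^r=\lambda_n^r e_n^r$, projecting onto $e^{ik\cdot x}$ for $k\in\Omega_\alpha$ gives
\begin{equation*}
(|k|^2-\lambda_n^r)\chi_n^k=-\la e^{ik\cdot x},\Pi_\alpha V e_n^r\ra .
\end{equation*}
Squaring and summing over $k\in\Omega_\alpha$, then using Bessel and $\|e_n^r\|_2=1$, we obtain
\begin{equation*}
\sum_{k\in\Omega_\alpha}\big||k|^2-\lambda_n^r\big|^2|\chi_n^k|^2\le\|\Pi_\alpha V e_n^r\|_2^2\le\|V\|_\infty^2,
\end{equation*}
which is controlled by $\|V\|_{H^{2^+}}$ through the Sobolev embedding. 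The eigenvalue localization stated before the lemma gives $|\lambda_n^r-|n|^2|<C(V)$, so $\la|n|^2-|k|^2\ra\le 1+C(V)+\big||k|^2-\lambda_n^r\big|$. Together with $\sum_k|\chi_n^k|^2=1$, this yields the first inequality.

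For the second inequality, the sum runs over $k\in\Omega_{\alpha(n)}$ with the coefficient $\chi_k^n$, which is the $n$-th Fourier coefficient of the eigenfunction $e_k^r$. By the same identity applied to $e_k^r$ and tested against $e^{in\cdot x}$,
\begin{equation*}
(|n|^2-\lambda_k^r)\chi_k^n=-\la e^{in\cdot x},\Pi_\alpha Ve_k^r\ra=-\la \Pi_\alpha V e^{in\cdot x},e_k^r\ra ,
\end{equation*}
using self-adjointness. The family $\{e_k^r\}_{k\in\Omega_\alpha}$ is an orthonormal basis of $\Pi_\alpha L^2$, because $-\Delta+V^r$ commutes with $\Pi_\alpha$ and the eigenfunctions with $\alpha(k)=\alpha$ span that block. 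Bessel in this basis therefore gives
\begin{equation*}
\sum_{k\in\Omega_\alpha}\big||n|^2-\lambda_k^r\big|^2|\chi_k^n|^2\le\|\Pi_\alpha Ve^{in\cdot x}\|_2^2\le\|V\|_\infty^2 .
\end{equation*}
We conclude as before, now using $|\lambda_k^r-|k|^2|<C(V)$ and $\sum_k|\chi_k^n|^2=1$; the latter holds by orthonormality, as it is the expansion of $e^{in\cdot x}$ stated before the lemma.

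The step that needs the most care is the bookkeeping of the indexing. One must check that the eigenfunctions labelled by $k\in\Omega_\alpha$ are exactly the eigenbasis of the block $\Pi_\alpha L^2$, so that the counts match and the labels agree with the eigenvalue localization $|\lambda_k^r-|k|^2|<C(V)$. This follows from the min-max construction described before the lemma, which pairs the eigenvalues in each block with the sorted values $|k|^2$, $k\in\Omega_\alpha$, within a bounded error. For the low-frequency blocks, where $V^r$ mixes blocks through the first term of its definition, the same argument works with $\Pi_\alpha$ replaced by the finite-rank projection onto the union of those blocks. Those $n$ are finitely many, so every bound there is trivially $\le C(V)$.
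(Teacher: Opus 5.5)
Your proof is correct and follows the paper's argument. The paper also writes $V^r e_n^r=\sum_{k\in\Omega_{\alpha(n)}}(\lambda_n^r-|k|^2)\chi_n^k e^{ik\cdot x}$, takes the $L^2$ norm, and concludes with $|\lambda_n^r-|n|^2|<C(V)$ and $\sum_k|\chi_n^k|^2=1$. For the second inequality the paper only says the same process works ``using that the $\chi_n$ are symmetric''; your step of expanding against the block eigenbasis $\{e_k^r\}_{k\in\Omega_\alpha}$ and applying Parseval there makes that remark precise, since it uses only that the block matrix $(\chi_n^k)$ is unitary.
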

\begin{proof}
This holds trivially for if $|n|\leq C(V)$, thus suppose that $|n|$ is big enough, this allows us to only work with the partition preserved elements. We have that $V^r=(-\Delta+V^r)+(-\Delta)$ and thus
\begin{equation*}
    V^re_n^r=\sum_{k\in\Omega_{\alpha(n)}}(\lambda_n-|k|^2)\chi_n^ke^{ik\cdot x}
\end{equation*}
    Taking the $L^2$ norm on both sides and using $C(V)>|\lambda_n-|n|^2|$, that $V^r$ is bounded, and $\sum_k|\chi_n^k|^2=\|e_n^r\|^2_2=1$ we conclude for the first inequality, as for the second the same process, using that the $\chi_n$ are symmetric, suffices.
\end{proof}
We can rewrite this making use of out description of $\Omega_\alpha$ granted by proposition \ref{prop:descripcion_Omega}. Let $n$ be fixed, write $k(\lambda_1,\lambda_2)=n+\lambda_1 u_1+\lambda_2 u_2\in\Z^2$ with $u_1=n/|n|$ and $u_2$ is a unitary vector that completes an orthonormal basis of $\R^2$ together with $u_1$. The previous inequality yields the following inequality 
    \begin{equation}\label{eq:buenadescrippciondepsi}
        \sum_{k(\mu_1,\mu_2)\in\Omega_{\alpha(n)}}|\chi_n^{k(\mu_1,\mu_2)}|^2\la\ \mu_2\ra^4 \le C(V).
    \end{equation}
The inequality comes down to Pythagoras. Suppose by symmetry that $|k(\mu_1,\mu_2)|>|n|$. We can look at the triangle formed by the origin and $p,n$, we have that $\la|n|^2-|k(\mu_1,\mu_2)|^2\ra\ge \la n-k(\mu_1,\mu_2) \ra^2\geq \la \mu_2\ra ^2$.
This concentration of the fourier transform also grants us a pointwise control of the $e_n^r$, we can see easily that the $\mu_2$ are uniquely defined by the $\mu_1$, and therefore:
\begin{equation*}
    |e_n^r(x)|^2\leq \Big|\sum_{k(\mu_1,\mu_2)\in\Omega_{\alpha(n)}}\chi_n^{k(\mu_1,\mu_2)} e^{i(k(\mu_1,\mu_2)\cdot x)} \Big|^2\le \sum_{k(\mu_1,\mu_2)\in\Omega_{\alpha(n)}}|\chi_n^{k(\mu_1,\mu_2)}|^2\la\mu_2\ra^4\sum_{\mu_2}\la\mu_2\ra^{-4}
\end{equation*}
and this last term is bounded by $C(V)$.
\subsection{Initial data and absolute continuity}\label{subsection:proba}
First it is clear that all three $\Gg_{-\Delta}(\omega),\Gg_{-\Delta+V}(\omega)$ and $\Gg_{-\Delta+V^r}(\omega)$ are almost surely in $H^{0^-}=\bigcap_{\varepsilon>0} H^{-\varepsilon}$. We show this by taking hypercontractivity estimates and realizing that the eigenvalues of the three are asymptotically equivalent to $|n|^2$, thus the trace of the resolvent of the truncated operators grows as $\log N$. On the other hand having seen that the $e_n^r$ are uniformly bounded in module we get a pointwise estimate on the dyadic decompositions of $\Gg_{-\Delta+V^r}$.
\begin{prop}\label{prop:norma_gauss_infty}
We have that for $c>0$ outside of a set of measure $N^{-c}$ that
\begin{equation}\label{eq:norma_gaus_infty}
\left\|\sum_{|n|\sim N}\frac{g_n^\omega}{\lambda_n^r} e_n^r\right\|_\infty \leq \log N.
\end{equation}   
\end{prop}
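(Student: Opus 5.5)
The plan is a standard chaining argument: discretize the torus, apply Gaussian tail bounds at each grid point, take a union bound, and pass from the grid to all of $\T^2$ by a Bernstein-type derivative estimate. Set
\[
F_N(x):=\sum_{|n|\sim N}\frac{g_n^\omega}{\lambda_n^r}e_n^r(x).
\]
The normalization in the statement uses $\lambda_n^r$ rather than $(\lambda_n^r)^{1/2}$; the argument below works for either, since only the variance matters. For fixed $x\in\T^2$, $F_N(x)$ is a centered complex Gaussian with variance
\[
\sigma_N^2(x)=\sum_{|n|\sim N}\frac{|e_n^r(x)|^2}{(\lambda_n^r)^{2}}.
\]
By the pointwise bound $|e_n^r(x)|^2\le C(V)$ obtained just before this subsection, together with $\lambda_n^r\ge |n|^2-C(V)$ and the count $\#\{|n|\sim N\}\le 4N^2$, we get $\sigma_N^2(x)\leeq N^{-2}$. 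In the $(\lambda_n^r)^{1/2}$ normalization we would instead get $\sigma_N^2\leeq 1$. In both cases the variance is bounded uniformly in $x$ and $N$. The Gaussian tail then gives
\[
\mathbb{P}\big(|F_N(x)|>\tfrac12\log N\big)\le \exp(-c(\log N)^2).
\]

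Next, pass to a grid. Since $e_n^r$ is supported in Fourier on $\Omega_{\alpha(n)}$, and $\Omega_{\alpha(n)}$ lies within $|n|^\rho$ of $n$, $F_N$ is a trigonometric polynomial with frequencies $|k|\le 2N$. Take a grid $G_N\subset\T^2$ with mesh $N^{-K}$ for a suitable $K$, so $\#G_N\sim N^{2K}$. A union bound gives
\[
\mathbb{P}\Big(\max_{x\in G_N}|F_N(x)|>\tfrac12\log N\Big)\leeq N^{2K}e^{-c(\log N)^2}\le N^{-c'}.
\]

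To control the whole torus, use the mean value theorem:
\[
\|F_N\|_\infty\le \max_{G_N}|F_N|+N^{-K}\|\nabla F_N\|_\infty,
\qquad
\|\nabla F_N\|_\infty\leeq N\sum_{|k|\le 2N}|\widehat{F_N}(k)|\leeq N^{2}\|F_N\|_{L^2}.
\]
The last step is Cauchy--Schwarz over at most $O(N^2)$ frequencies. The $L^2$ norm is controlled crudely by the Gaussian concentration quoted in Section \ref{sect}: outside a set of measure $e^{-N}$ we have $\|F_N\|_{L^2}\le N$. Choosing $K=4$ then makes the gradient term $O(N^{-1})\le\frac12\log N$. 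Combining the two exceptional sets yields the claim outside a set of measure $N^{-c}$.

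The main obstacle is the uniform-in-$x$ variance bound, because it relies entirely on the uniform boundedness of the eigenfunctions $e_n^r$. That boundedness came from the concentration estimate \eqref{eq:buenadescrippciondepsi} and the description of the $\Omega_\alpha$ in Proposition \ref{prop:descripcion_Omega}, and it is exactly what fails for the true eigenfunctions of $-\Delta+V$. The low frequencies $|n|\le C(V)$, where $\Pi_{\alpha}$ need not localize, form a finite set and contribute only a constant, which is absorbed into $\log N$ for $N$ large.
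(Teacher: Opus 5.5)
Your argument is correct, and it proves more than the paper's own proof does. The paper's proof is only your first step. At a fixed $x\in\T^2$ it applies the Gaussian tail with the crude variance bound $\E|F_N(x)|^2\leeq\log N$ and takes $t=\log N$, which gives $\mathbb{P}(|F_N(x)|\ge\log N)\le N^{-c}$ at each point. It never passes from a fixed $x$ to $\|F_N\|_\infty$.

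You supply exactly that missing step: the $N^{-K}$-net, the union bound, and the Bernstein/mean-value estimate based on the Fourier localization of $e_n^r$ in $\Omega_{\alpha(n)}$. Your observation that a single dyadic block has variance $O(1)$ (or $O(N^{-2})$ with the normalization as printed) is what makes this close at the stated threshold. Tails of size $e^{-c(\log N)^2}$ absorb the $N^{2K}$ grid points. With the paper's $\log N$ variance you would only get $N^{-c}$ per point, and you would have to raise the threshold to $A\log N$ with $A$ large.

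The paper's version buys brevity and suffices at a fixed $x$, or for $L^p_x$ with $p<\infty$ via Fubini. Yours gives the $L^\infty$ bound as stated. With an added time grid (a time derivative costs only a polynomial factor in $N$), it also gives the $L^\infty(\T^2\times[0,\tau])$ bound on the linear evolution that the paper later invokes in the term-standardization step.

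A minor caveat on your closing remark. The variance bound needs only $\sup_x\sum_{|n|\sim N}|e_n(x)|^2\leeq N^2$. This also holds for the eigenfunctions of $-\Delta+V$ with $V\ge 0$, by heat-kernel domination. So uniform boundedness of each $e_n^r$ is convenient here, but it is not the real crux.
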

\begin{proof}
This is because once again using hypercontractivity estimates we have that for a fixed $x\in \T^2$ and $t>0$:
\begin{equation*}
\mathbb{P}\left(\left|\sum_{|n|\sim N}\frac{g_n^\omega}{\lambda_n^r} e_n^r\right|\geq t\right)\leq \exp\left( {-ct^2}\left(\E\left|\sum_{|n|\sim N}\frac{g_n^\omega}{\lambda_n^r} e_n^r\right|^2\right)^{-1}\right)\leq e^{-ct^2/\log N}.
\end{equation*}
And therefore taking $t=\log N$ we get our estimation.
\end{proof}
On the other hand we can show that the Wiener measures associated are somewhat stable by perturbation of order 0.
\begin{prop}
    Fix $\varepsilon>0$, let $A$ be a self adjoint operator $\Ll^2(\T^2)$ and $B$ a relatively self adjoint operator with respect to A in $\Ll^0(\T^2)$, suppose that $A,A+B>0$. $A $ and $A+B$ define Wiener measures over $H^{-\varepsilon}$ and $\rho_{A}$ and $\rho_{A+B}$ are absolutely continuous with respect to each other.
\end{prop}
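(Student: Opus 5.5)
We reduce the statement to the Feldman--Hájek criterion of Proposition \ref{prop:absolutecontinuitywiener}, applied on $\HH=L^2(\T^2)$ with $D=\la -\Delta\ra^{1+\varepsilon}$. This choice of $D$ makes $D^{-1}$ trace class on $\T^2$, and $D^{1/2}\HH=H^{-(1+\varepsilon)/2}$. The support statement is a direct check. Since $B\in\Ll^0$ is bounded on every $H^s$, the argument of Lemma \ref{lemma:decro_vp} and Courant--Fischer shows that the eigenvalues of $A$ and of $A+B$ are comparable to $\la n\ra^2$, assuming $A$ is itself an order-$0$ perturbation of $-\Delta$ as in the rest of the paper. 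Then $\E\|\Gg_{A}\|^2_{H^{-\varepsilon}}=\tr(A^{-1}\la-\Delta\ra^{-\varepsilon})$, and this trace is comparable to $\sum_n\la n\ra^{-2-2\varepsilon}<\infty$ by the equivalence of Sobolev norms in the Corollary. Hence both Gaussian measures live on $H^{-\varepsilon}$, and on any larger space of the form $D^{1/2}\HH$.

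For mutual absolute continuity it suffices to show that
\[T:=A^{1/2}(A+B)^{-1}A^{1/2}-1\]
is Hilbert--Schmidt on $L^2$. We rewrite it as
\[T=A^{1/2}\big((A+B)^{-1}-A^{-1}\big)A^{1/2}=-A^{1/2}(A+B)^{-1}BA^{-1/2}.\]
Since $A+B$ and $A$ define equivalent Sobolev scales, $A^{1/2}(A+B)^{-1}A^{1/2}$ is bounded on $L^2$. Therefore $T=-\big(A^{1/2}(A+B)^{-1}A^{1/2}\big)\,A^{-1/2}BA^{-1/2}$, and it is enough to show that $K:=A^{-1/2}BA^{-1/2}$ is Hilbert--Schmidt.

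We compute the Hilbert--Schmidt norm of $K$ in the Fourier basis, after replacing $A^{-1/2}$ by $\la-\Delta\ra^{-1/2}$ up to bounded factors. This replacement is again justified by the norm equivalence: $A^{-1/2}\la-\Delta\ra^{1/2}$ is bounded, and Hilbert--Schmidt operators form an ideal. Using the defining bound of $\Ll^0$ with $M=3$ gives
\[\|K\|_{HS}^2\lesssim\sum_{n,n'}\la n\ra^{-2}\la n'\ra^{-2}\|\Pi_nB\Pi_{n'}\|^2\lesssim\sum_{n,n'}\la n\ra^{-2}\la n'\ra^{-2}\la n-n'\ra^{-6}.\]
For fixed $n$, split the inner sum into the regions $|n'|\ge |n|/2$ and $|n'|<|n|/2$. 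This yields a bound of order $\la n\ra^{-2}$, so the total is $\lesssim\sum_n\la n\ra^{-4}<\infty$. Proposition \ref{prop:absolutecontinuitywiener} then gives the equivalence of $\rho_A$ and $\rho_{A+B}$.

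The main obstacle is the step that moves $A^{\pm1/2}$ through $(A+B)^{-1}$ and compares it with $\la-\Delta\ra^{\pm1/2}$. This needs the hypothesis that $B$ is symmetric and relatively bounded with respect to $A$, so that $A+B$ is self-adjoint and positive. It also needs quantitative equivalence of $\|A^{s/2}\cdot\|$ and $\|(A+B)^{s/2}\cdot\|$ for $s=\pm1$. The plan is to get this from the Corollary, applied to both $A$ and $A+B$, together with Heinz--Kato interpolation between $s=0$ and $s=2$.
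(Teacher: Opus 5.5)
Your proposal is correct and follows the paper's proof: Feldman--H\'ajek on $L^2(\T^2)$, the resolvent identity giving $A^{1/2}(A+B)^{-1}A^{1/2}-1=-A^{1/2}(A+B)^{-1}BA^{-1/2}$, and a Hilbert--Schmidt sum in the Fourier basis. The paper simply asserts that this operator lies in $\Ll^{-2}$; you instead factor off the bounded operator $A^{1/2}(A+B)^{-1}A^{1/2}$ and estimate $A^{-1/2}BA^{-1/2}$ entrywise from the $\Ll^0$ bound on $B$, which justifies the same step more carefully (one harmless slip: with your $D$, $D^{1/2}\HH=H^{-1-\varepsilon}$, not $H^{-(1+\varepsilon)/2}$).
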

\begin{proof}
    Set $\HH={L^2(\T^2)}$ and realize that $A^{-1}$ and $(A+B)^{-1}$ are bounded on $\HH$. Now we look at proposition \ref{prop:absolutecontinuitywiener} with $D=(-\Delta)^{1+2\varepsilon}$. We have that $A$ and $A+B$ define wiener measures on $H^{-1-\varepsilon}$ and we can restrict them to $H^{-\varepsilon}(\T^2)$ as we have seen previously through hypercontractivity estimates that $\rho_{A,A+B}(H^{-1-\varepsilon}(\T^2) \textbackslash H^{-\varepsilon}(\T^2) )=0$
    For the absolute continuity we have to prove that $A^{1/2}(A+B)^{-1}A^{1/2}-1$ is Hilbert-Schmidt on $L^2(\T^2)$. First realize that:
    \begin{equation*}
        (A+B)^{-1}A=1-(A+B)^{-1}B \implies (A+B)^{-1}=A^{-1}-(A+B)^{-1}BA^{-1}
    \end{equation*}
from which:
\begin{equation*}
        A^{1/2}(A+B)^{-1}A^{1/2}-1= -A^{1/2}(A+B)^{-1}BA^{-1/2}\in \Ll^{-2}. 
\end{equation*}
If we take an orthonormal basis of, say $\{e_n\}_{n\in\Z^2}=\{e^{in\cdot x}\}_{n\in \Z^2 }$ we have:
\begin{equation*}
    \sum_{n\in\Z^2}\| \big(A^{1/2}(A+B)^{-1}A^{1/2}-1\big)e_n\|^2_{L^2}\leeq  \sum_{n\in\Z^2} \frac{1}{|n|^4} < +\infty
\end{equation*}
meaning that $A^{1/2}(A+B)^{-1}A^{1/2}-1$ is Hilbert-Schmidt. The absolute continuity follows directly from there.
\end{proof}
If we take $A=-\Delta+V^r$ and $B=V^a$ we have that $\rho_{-\Delta+V}$ and $\rho_{-\Delta+V^r}$ are absolultley continuous with respect to each other and the RN derivative $\frac{d\rho_{-\Delta+V}}{d\rho_{-\Delta+V^r}}$ is 
\begin{align*}
    &\lim_{N\to\infty}\det(\Pi_{|n|\le N}(-\Delta+V^r)^{1/2}(-\Delta+V)^{-1}(-\Delta+V^r)^{1/2}\Pi_{|n|\le N})^{-1/2}\times\\
    &\exp\Big(1/2\la x,\Pi_{|n|\le N}(-\Delta+V^r)^{-1/2}V^a(-\Delta+V^r)^{-1/2}\Pi_{|n|\le N}x\ra_{L^2}\Big).
\end{align*}
The determinant is bounded by a constant depending on $\|V\|_\infty$ as it is the Fredholm determinant of a bounded operator. On the other hand for $x\in L^2(\T^2)$, $N>0$ we have a constant $C(V)>0$
\begin{equation*}
    \la x,\Pi_{|n|\le N}(-\Delta+V^r)^{-1/2}V^a(-\Delta+V^r)^{-1/2}\Pi_{|n|\le N}x\ra_{L^2}\leq C\|x\|^2_{H^{-1}}
\end{equation*}
this means that letting $\Lambda_\lambda=\{u\in \D', \big\|u\big\|_{H^{-1}}< \lambda\}$, that for $M\subset \Lambda_\lambda$
\begin{equation*}
    \rho_{-\Delta+V}(M)e^{-C \lambda^2} \le   \rho_{-\Delta+V^r}(M) \le \rho_{-\Delta+V}(M) e^{C\lambda^2},
\end{equation*}
on the other hand we have both through hypercontractivity estimates $\rho_{-\Delta+V}\big((\Lambda_\lambda)^c\big)\le e^{-C\lambda^2}$ and $\rho_{-\Delta+V^r}\big((\Lambda_\lambda)^c\big)\le e^{-C'\lambda^2}$. Thus if for a set $B\subset H^{-\varepsilon}$ we have that, for some $\tau,\delta>0$ $\rho_{-\Delta+V^r}\big(B\big)\le e^{-1/\tau^\delta}$ then for $\lambda>0$:
\begin{equation*}
  \rho_{-\Delta+V}\big(B\big)\le \rho_{-\Delta+V^r}\big(B\cap\Lambda_\lambda\big) +\rho_{-\Delta+V^r}\big(B\cap(\Lambda_\lambda)^c\big)\le  e^{C \lambda^2-1/\tau^\delta} + e^{-C'\lambda^2}\leeq e^{-C''/{\tau^\delta}}
\end{equation*}
with a choice of $\lambda^2 = C/({2\tau^\delta})$. This will be useful later allowing to extend the probabilistic estimates from $-\Delta+V^r$ to $-\Delta+V$.
\section{Local Well Posedness}\label{sect:LWP_4}
\subsection{Local wellpossednes with respect to the Wiener measure }
We work on the LWP theory.
\begin{prop}\label{prop:LWP}
For some $0<s$ and $\forall\tau>0$ small enough, there exists a set $\Omega$ and $\alpha>0$ such that, $\rho_{-\Delta+V^r}(\Omega^c)\leeq e^{-1/\tau^\alpha}$ and for all $\omega\in\Omega$ we have that
\begin{equation}\label{eq:Non_Linear_altered_equation}
    \begin{cases}
        (i\partial_t-\Delta+V)u = :u|u|^2:
        \\
        u(0,x)=\Gg_{-\Delta+V^r}(\omega)=\sum_{n} \frac{g_n^\omega}{(\lambda_n^r)^{1/2}}e_n^r(x)
    \end{cases}
\end{equation}
is weakly wellpossed. The solution is the limit of the solutions to the truncated problems in the sense that 
\begin{equation*}
    u_N-e^{2ic_N^r(\omega)t}e^{it(-\Delta+V)}P_{N}\Gg_{-\Delta+V^r}(\omega)\xrightarrow{N\to\infty} u-e^{2ic_\infty^r(\omega)t}e^{it(-\Delta+V)}\Gg_{-\Delta+V^r}(\omega)
\end{equation*}
converges in $L^{\infty}_t(I,H^s(\T^2))$, the truncated equation being:
\begin{equation}
\begin{cases}
\Big(i\partial_t - \Delta  +V\Big)u^N =P_N(u^N|u^N|^2-2a_N^ru^N)\\ 
u(0,x)=P_N(\Gg_{-\Delta+V^r}(\omega))(x)=\sum_{(\lambda_n)^{1/2}\leq N} \frac{g_n^\omega}{(\lambda_n^r)^{1/2}}e_n^r(x)
\end{cases} 
\end{equation}
With $P_N=\Pi_{-\Delta+V^r\leq N^2}$ or equivalently $\Pi_{-\Delta\leq N^2}$,
\begin{equation}\label{eq:an}
a_N^{(r)}:=\mathbb{E}\Bigg(\left\|P_N(\Gg_{-\Delta+V^{(r)}}(\omega))(x)\right\|^2_{L^2(\T^2)}\Bigg).
\end{equation}
and
\begin{equation}\label{eq:cw}
c^{(r)}_N(\omega)=\int |u^N|^2-a_N^{(r)}.
\end{equation}
\end{prop}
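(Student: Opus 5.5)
We follow Bourgain's scheme from \cite{2D_Defocusing}, run in the Bourgain spaces $X^{A}_{s,b}$ with $A=-\Delta+V$. By the equivalence lemma these coincide with $X^{-\Delta}_{s,b}$ for $|b|\le1$ and $|I|\le 1$.

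\textbf{Step 1: removing the phase.} First we absorb the phase coming from the Wick ordering. The quantity $\int|u^N|^2$ is conserved by the truncated flow. Hence $P_N(u^N|u^N|^2-2a_N^ru^N)=P_N(:u^N|u^N|^2:)+2c^r_N u^N$, with the modified term $:u|u|^2:=u|u|^2-2u\int|u|^2$ plus the constant $c_N^r$. The gauge change $v^N=e^{-2ic_N^r t}u^N$ then turns the equation into one with nonlinearity $P_N(v|v|^2-2v\int|v|^2)+(\text{lower order})$.

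\textbf{Step 2: the fixed-point problem.} We write $v^N=e^{it(-\Delta+V)}P_N\Gg_{-\Delta+V^r}+w^N$ and solve for $w^N$ by a fixed point in a ball of $X_{s,b}(I_\tau)$, with $b=1/2^+$ and $s>0$ small. We use the Duhamel bound $\|\int_0^t e^{i(t-s)A}F\|_{X_{s,b}}\lesssim \tau^{0^+}\|F\|_{X_{s,b-1}}$. By Lemma \ref{lemma:Xsb}, control in $X_{s,b}$ gives control in $L^\infty_tH^s$.

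The linear evolution is the main difficulty, because $e^{it(-\Delta+V)}e^r_n$ is not explicit. We conjugate with Proposition \ref{prop:comutadorlaplmasv}: $(1+Q)$ maps solutions of the $-\Delta+V$ flow to solutions of the $-\Delta+V^r$ flow, up to the smoothing error $R\in\Ll^{-\delta}$. This gives
\[e^{it(-\Delta+V)}\Gg_{-\Delta+V^r}=(1+Q)^{-1}\Big(e^{it(-\Delta+V^r)}\Gg_{-\Delta+V^r}+\int_0^t e^{i(t-s)(-\Delta+V^r)}R\,(\cdots)\,ds\Big).\]
The second term gains $\delta$ derivatives and can therefore be placed in the regular part. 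The first term equals $\sum_n g_n(\lambda^r_n)^{-1/2}e^{it\lambda^r_n}e^r_n$, which is explicit.

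\textbf{Step 3: trilinear decomposition.} We expand $:z_1\bar z_2 z_3:$ with each $z_j$ either the random linear term or $w$, dyadically localized at frequencies $N_1,N_2,N_3$.

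Terms with at least two deterministic factors, or with a high-frequency deterministic factor, are handled by Corollary \eqref{coro:stricartz}. The $N^{\varepsilon}$ losses are absorbed by the gain $s>0$. The $\log N$ loss from the random factors is controlled through Proposition \ref{prop:norma_gauss_infty} and $\|\cdot\|_{H^{-\varepsilon}}$ bounds.

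The purely random term and the terms with one deterministic factor of low frequency are treated probabilistically. We use the Fourier expansion $e^r_n=\sum_{k\in\Omega_{\alpha(n)}}\chi^k_n e^{ik\cdot x}$. The concentration estimate \eqref{eq:buenadescrippciondepsi}, together with Lemma \ref{lem:control_e_f}, shows that $e^r_{n_1}\overline{e^r_{n_2}}e^r_{n_3}$ essentially lives on $\Omega$-blocks around $n_1-n_2+n_3$. We then compute the $X_{s,b-1}$ norm through its second moment via hypercontractivity. This reduces the bound to lattice counts of the form $|n_1|^2-|n_2|^2+|n_3|^2-|n_4|^2=O(N^{\rho})$, relaxed by the block widths $|m|^\rho$ of Lemma \ref{lem:Partition}. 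These counts are handled by the divisor bound exactly as in Bourgain. The resulting gain is $N^{-c(\delta-\rho)}$, which fixes how small $s$ must be.

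The Wick subtraction $2v\int|v|^2$ cancels the pairings $n_1=n_2$ and $n_2=n_3$. The pairing $n_1=n_2$ now gives terms of the form $\sum_k|\chi^k_n|^2-1$, which vanish because the $e^r_n$ are normalized. The off-diagonal remainder $\sum_{k\neq k'}\chi^k_n\overline{\chi^{k'}_n}$ is the genuinely new term. We expect it to be the main obstacle. The plan is to bound it by Lemma \ref{lem:control_e_f} through the decay in $\langle|n|^2-|k|^2\rangle$, so that it is a smoothing perturbation.

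\textbf{Step 4: closing the argument.} Discarding exceptional sets with probability at most $e^{-1/\tau^\alpha}$ (coming from the Gaussian tails), we obtain a contraction on $I_\tau$.

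For the convergence, we redo the estimates for differences $w^N-w^M$. Every term involving frequencies above $\min(N,M)$ carries a gain $\min(N,M)^{-\alpha}$. This gives a Cauchy sequence in $L^\infty_tH^s$, and the limit is the weak solution. The phases also converge: $c_N^r\to c_\infty^r$ almost surely, because $c^r_N$ is a centred sum $\sum(|g_n|^2-1)/\lambda_n^r$ with variance $\sum\lambda_n^{-2}<\infty$. Uniqueness in the ball follows from the contraction.
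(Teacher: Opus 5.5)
Your overall architecture matches the paper's. You conjugate by $1+Q$ to replace $e^{it(-\Delta+V)}\Gg_{-\Delta+V^r}$ by the explicit $e^{it(-\Delta+V^r)}\Gg_{-\Delta+V^r}$ up to an $X_{s+\delta_0,b}$ remainder, run Bourgain's trilinear analysis, remove $c^r_N$ by a gauge, and get convergence through differences. The gap is in the step you yourself flag as the main obstacle.

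\textbf{The new term.} It is (\ref{eq:terminos_decomp_2}), $\sum_{n_2\neq n_1}u(n_1)|u(n_2)|^2e_{n_1}(|e_{n_2}|^2-1)$, and your plan for it cannot work.
\begin{itemize}
\item Lemma \ref{lem:control_e_f} only says that the Fourier mass of $e^r_n$ sits on the $O(1)$ points $k\in\Omega_{\alpha(n)}$ with $||k|^2-|n|^2|=O(1)$. On those points the $\chi^k_n$ can be of size $O(1)$, for instance when $V$ hybridizes two lattice points of one circle lying in the same block.
\item Hence the products $\chi^k_n\overline{\chi^{k'}_n}$, $k\neq k'$, do not decay, and $|e_n|^2-1$ is an $O(1)$ function, neither small nor smoothing.
\item Smoothness of the averaged multiplier $\sum_{n_2}\lambda_{n_2}^{-1}|g_{n_2}|^2(|e_{n_2}|^2-1)$ would not help either. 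Its action on the rough factor $u(n_1)e_{n_1}$ has a fully resonant component along $e_{n_1}$ itself, with coefficient $\int|e_{n_1}|^2(|e_{n_2}|^2-1)$.
\item That component gets no modulation gain. It acts as $n_1$-dependent phase shifts, which the single global phase $c^r_N$ cannot absorb.
\end{itemize}
The missing idea is the geometric sparseness used in cases L and L'.
\begin{itemize}
\item By Proposition \ref{prop:descripcion_Omega}, $|e_n|^2-1$ has Fourier support in $\Pp^*_n$: nonzero $k$ with $|k|\lesssim|n|^\rho$ and $|\la k,n\ra|\lesssim|n|^\rho$. So $\la e_{n_1}(|e_{n_2}|^2-1),e_{n_1+k}\ra$ vanishes for $|k|>4N_1^\rho$.
\item For $k=0$, each mode $k'\in\Pp^*_{n_2}$ pairs with $|e_{n_1}|^2$ only if $|\la k',n_1\ra|\lesssim N_1^\rho$. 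This is a strip of about $N_1^{1+\rho}$ points.
\item For $k\neq0$, the resonance $\lambda_{n_1}-\lambda_{n_1+k}=\rho$ pins $\la n_1,k\ra$ in the same way.
\item Together with the independence of $g_{n_1}$ and the splitting $|g_{n_2}|^2=(|g_{n_2}|^2-1)+1$, this counting yields the gain $N_1^{s-1/2+\rho+\varepsilon}N_2^{-1}$.
\end{itemize}

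\textbf{The trilinear bookkeeping.} This also needs repair.
\begin{itemize}
\item Corollary (\ref{coro:stricartz}) does not handle a random top-frequency factor with two deterministic ones below (the paper's case E). There the weight $(N^1)^s$ falls on a factor of regularity $0^-$, and Strichartz only gives $(N^1)^s(N^2)^{\varepsilon-s}(N^3)^{\varepsilon-s}$.
\item Nor does it handle a deterministic top factor with two random lower ones (case D). There the $s$ derivatives are paid exactly by the deterministic factor, so nothing from $s>0$ absorbs the $(N^2N^3)^{\varepsilon}$ losses, which are not summable.
\item In these cases a second-moment computation via hypercontractivity is not available, because the deterministic factor depends on $\omega$. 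You must dualize it out and bound operator norms of random matrices, as the paper does with $\|\cM^*\cM\|$ in cases D, E and F.
\item Your announced gain $N^{-c(\delta-\rho)}$ is in fact a loss, since Lemma \ref{lem:Partition} has $\delta<\rho$. In the paper $\rho$ enters only as a loss $N^{\rho}$ from block sizes, set against counting gains such as $N^{-1/2}$ or $N^{-1/4}$.
\end{itemize}
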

Knowing that $\rho_{-\Delta+V^r}$ and $\rho_{-\Delta+V}$ are almost $L^\infty$ with respect to each other it follows that:
\begin{cor}\label{prop:LWP_laplaciana}
For some $0<s$ and $\forall \tau>0$ small enough, there exists a set $\Omega$ such that, $\rho_{-\Delta+V}(\Omega^c)\leeq e^{-1/\tau^\alpha}$ and for all $\omega\in\Omega$ we have that
\begin{equation*}
    \begin{cases}
        (i\partial_t-\Delta+V)u = :u|u|^2:
        \\
        u(0,x)=\Gg_{-\Delta+V}(\omega)=\sum_{n} \frac{g_n^\omega}{(\lambda_n)^{1/2}}e_n(x)
    \end{cases}
\end{equation*}
is weakly wellpossed and the solution is the limit of the solutions to the truncated problems in the sense that 
\begin{equation*}
    u_N-e^{2ic_N(\omega)t}e^{it(-\Delta+V)}\Pi_{|n|\leq N}\Gg_{-\Delta+V}(\omega)\xrightarrow{N\to\infty} u-e^{2ic_\infty(\omega)t}e^{it(-\Delta+V)}\Gg_{-\Delta+V}(\omega)
\end{equation*}
converges in $L^{\infty}_tH^s(\T^2)$, the truncated equation being:
\begin{equation*}
\begin{cases}
\Big(i\partial_t -\Delta  +V\Big)u^N =P_N(u^N|u^N|^2-2a_Nu^N)\\ 
u(0,x)=P_N(\Gg_{-\Delta+V}(\omega))(x)
\end{cases} 
\end{equation*}
\end{cor}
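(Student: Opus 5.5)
The plan is to transfer the statement of Proposition \ref{prop:LWP} from $\rho_{-\Delta+V^r}$ to $\rho_{-\Delta+V}$. The key point is that the conclusion of Proposition \ref{prop:LWP} is a property of the initial datum alone, not of the particular Gaussian series representing it. Let $B\subset H^{-\varepsilon}(\T^2)$ be the set of data $\phi$ with the following property: the truncated problems with data $P_N\phi$ have solutions $u_N$ such that $u_N-e^{2ic_N t}e^{it(-\Delta+V)}P_N\phi$ converges in $L^\infty_t(I,H^s)$, and the limit gives a weak solution of the Wick-ordered equation on $I$. Proposition \ref{prop:LWP} then says $\rho_{-\Delta+V^r}(B^c)\leeq e^{-1/\tau^\alpha}$. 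Once $B^c$ is known to be measurable, the estimate at the end of Subsection \ref{subsection:proba}, built from the Feldman--Hájek density bound on $\Lambda_\lambda$ and the Gaussian tails of $\|\cdot\|_{H^{-1}}$, gives $\rho_{-\Delta+V}(B^c)\leeq e^{-C''/\tau^{\alpha}}$. After renaming $\alpha$, I take $\Omega=\Gg_{-\Delta+V}^{-1}(B)$.

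The steps, in order, are as follows.

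\emph{Step 1.} Rewrite Proposition \ref{prop:LWP} as a statement about the pushforward measure $\rho_{-\Delta+V^r}$ on $H^{-\varepsilon}$. This requires checking that $B$ is Borel. I would check it by writing $B$ as countable intersections and unions of sets defined by the continuous finite-dimensional maps $\phi\mapsto u_N[P_N\phi]$, using the fact that each truncated flow is a finite-dimensional ODE that is globally defined thanks to $L^2$ conservation.

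\emph{Step 2.} Deal with the renormalisation constants. Proposition \ref{prop:LWP} uses $a_N^r=\E\|P_N\Gg_{-\Delta+V^r}\|_2^2$, whereas the corollary uses $a_N=\E\|P_N\Gg_{-\Delta+V}\|_2^2$. I would show that $a_N-a_N^r$ converges as $N\to\infty$: the difference equals $\tr P_N\big((-\Delta+V)^{-1}-(-\Delta+V^r)^{-1}\big)P_N$, and since $(-\Delta+V)^{-1}-(-\Delta+V^r)^{-1}=-(-\Delta+V)^{-1}V^a(-\Delta+V^r)^{-1}\in\Ll^{-4}$, this operator is trace class. Changing $a_N$ by a convergent constant $d_N\to d_\infty$ amounts to multiplying $u_N$ by the phase $e^{2id_Nt}$, because $\|u_N\|_2$ is conserved; this phase is absorbed into $c_N$ and only shifts $c_\infty$. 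In the same way, $P_N$ is the same projector $\Pi_{-\Delta\le N^2}$ in both settings. So replacing $a_N^r$ by $a_N$ changes neither the set $B$ nor the convergence, and $c_N(\omega)\to c_\infty(\omega)$ follows from the corresponding statement for $c_N^r$.

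\emph{Step 3.} Apply the absolute-continuity estimate with $\lambda^2\sim\tau^{-\alpha}$ to obtain the measure bound, and read off the convergence of $u_N-e^{2ic_Nt}e^{it(-\Delta+V)}\Pi_{|n|\le N}\Gg_{-\Delta+V}(\omega)$ for $\omega\in\Omega$.

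I expect Step 2 to be the main obstacle. The issue is whether the phase $e^{2ic_N t}$ and the constant $a_N$ are intrinsic to the datum, or whether they silently depend on which Gaussian model is used. In particular, $c_N(\omega)=\|P_N\phi\|_2^2-a_N$ must converge $\rho_{-\Delta+V}$-almost surely. I would prove this directly by a second-moment computation under $\rho_{-\Delta+V}$: the variance of $\|P_N\phi\|_2^2$ is $\tr\big(P_N(-\Delta+V)^{-1}P_N\big)^2$, which is Cauchy in $N$ since $\sum|n|^{-4}<\infty$. The remaining steps are bookkeeping on top of results already established.
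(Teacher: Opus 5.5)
Your route is the paper's: the corollary is deduced from Proposition~\ref{prop:LWP} purely by transferring the exceptional set of data through the comparability of $\rho_{-\Delta+V^r}$ and $\rho_{-\Delta+V}$ from the end of Subsection~\ref{subsection:proba}. Your Steps~1--2 (Borel measurability of the good set, and reconciling $a_N^r$ with $a_N$ via the trace-class resolvent difference and the gauge phase $e^{2id_Nt}$) make explicit bookkeeping that the paper passes over.

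One caution on the shared transfer step. In terms of the field, the Feldman--H\'ajek exponent is the (renormalized) order-zero form $\langle \phi, V^a\phi\rangle$, which is not bounded by $\|\phi\|_{H^{-1}}^2$. So instead of a pointwise density bound on $\Lambda_\lambda$, use H\"older: $K=(-\Delta+V^r)^{-1/2}V^a(-\Delta+V^r)^{-1/2}$ is Hilbert--Schmidt with $1+K$ positive and invertible, hence $F=d\rho_{-\Delta+V}/d\rho_{-\Delta+V^r}\in L^p(\rho_{-\Delta+V^r})$ for some $p>1$, and $\rho_{-\Delta+V}(B^c)\le\|F\|_{L^p}\,\rho_{-\Delta+V^r}(B^c)^{1-1/p}$ still gives an $e^{-c/\tau^{\alpha}}$ bound.
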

For a given initial datum there is a solution if and only if there is a unique fixed point to the equation written under its Duhamel form, $Tu=u$, with $T$ being the transform:
\begin{equation}\label{eq:Duhamel_transorm}
T: u \longmapsto e^{it(-\Delta+V)}\Gg_{-\Delta+V^r}(\omega) - i \int_{0}^t e^{i(t-s)(-\Delta+V)}:u|u|^2:ds.
\end{equation}
The idea of the proof if to show that $T$ is a contraction on $e^{it(-\Delta+V)}\Gg_{-\Delta+V^r}(\omega)+B_{X_{\delta,b}}(0,1)$ for $0<\tau$ small enough and $b>1/2$. Thus we want to show that on a short enough time span the integral term in (\ref{eq:Duhamel_transorm}) sends (up to restriction in $\omega$) $e^{it(-\Delta+V)}\Gg_{-\Delta+V^r}(\omega)+B_{X_{\delta,b}}(0,1)\to B_{X_{\delta,b}}(0,\tau)$, with $B_{X_{\delta,b}}(0,\tau)$ the  ball in ${X_{\delta,b}}$ of center $0$ and radius $\tau$.  First we introduce the following lemma which works in a more general setting and allows us to "gain" a derivative in time.
\begin{lem}\label{lemma:estimacion_1_dualidad}
Write $S(t)=e^{it(-\Delta+V)}$, we have for some $b > 1/2 >b'>0$ such that $b+b'<1$, that for $\tau <1$:
\begin{equation}\label{eq:norma}
\left\|\int_{0}^t S(t-s)\Gamma(u)ds\right\|_{X_{s,b}([0,\tau])}\leq C\tau^\alpha \left\|\Gamma(u)\right\|_{X_{s,-b'([0,\tau])}}
\end{equation}
for some $\alpha=1-b-b'>0$. Where $\Gamma(u)$ is a function of $u$ eventually either $u|u|^2$ or $P_N(u^N|u^N|^2)$.
\end{lem}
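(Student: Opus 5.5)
We reduce the estimate to the standard one-dimensional Duhamel estimate in time by conjugating with the linear flow. Set $F=\Gamma(u)$ and $w(t)=\int_0^t S(t-s)F(s)\,ds$. Then $S(-t)w(t)=\int_0^t S(-s)F(s)\,ds$. We expand $F$ in the eigenbasis $\{e_n\}$ of $A=-\Delta+V$, writing $F_n(s)=\langle F(s),e_n\rangle$. By the definition of the Bourgain space (via the decomposition (\ref{def:decompos})), the $X^A_{s,b}$ norm of $w$ equals
\[
\Big(\sum_n \lambda_n^{s}\,\big\|\textstyle\int_0^t f_n(s)\,ds\big\|_{H^b_t}^2\Big)^{1/2},
\qquad f_n(s)=e^{-is\lambda_n}F_n(s).
\]
Similarly, $\|F\|_{X_{s,-b'}}$ is the weighted $\ell^2_n$ sum of $\|f_n\|_{H^{-b'}_t}$ with the same weights $\lambda_n^s$. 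Since the weights are identical on both sides, it suffices to prove the scalar estimate
\[
\Big\|\psi_\tau(t)\int_0^t f(s)\,ds\Big\|_{H^b(\R)}\le C\tau^{1-b-b'}\|f\|_{H^{-b'}(\R)},
\]
uniformly in the mode $n$. Here $\psi_\tau$ is a smooth cutoff adapted to $[0,\tau]$, and $f$ is any extension of $f_n$ from $[0,\tau]$. Taking the infimum over extensions then gives the restriction-space norms.

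For the scalar estimate we follow the standard argument (as in \cite{Tao2006NonlinearDE} or Ginibre's lemma). Write $\int_0^t f=\int \hat f(\lambda)\frac{e^{it\lambda}-1}{i\lambda}\,d\lambda$ and split into $|\lambda|\tau\le1$ and $|\lambda|\tau>1$.

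On the low-frequency piece, Taylor-expand $\frac{e^{it\lambda}-1}{i\lambda}=\sum_{k\ge1}\frac{(it)^k\lambda^{k-1}}{k!}$. Each term is $t^k\psi_\tau(t)$ times $\int_{|\lambda|\le1/\tau}\lambda^{k-1}\hat f$. We have $\|t^k\psi_\tau\|_{H^b}\lesssim \tau^{k+1/2-b}$, and by Cauchy--Schwarz,
\[
\int_{|\lambda|\le1/\tau}|\lambda|^{k-1}|\hat f|\le \|f\|_{H^{-b'}}\Big(\int_{|\lambda|\le 1/\tau}|\lambda|^{2k-2}\langle\lambda\rangle^{2b'}\Big)^{1/2}\lesssim \tau^{-k+1/2-b'}\|f\|_{H^{-b'}},
\]
using $\tau<1$. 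The product is $\tau^{1-b-b'}/k!$, which sums over $k$.

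On the high-frequency piece we get two terms. The first is $\psi_\tau(t)\int_{|\lambda|>1/\tau}\frac{\hat f(\lambda)}{i\lambda}\,d\lambda$, a constant times $\psi_\tau$. Its $H^b$ norm is $\tau^{1/2-b}$ times a Cauchy--Schwarz bound $\tau^{1/2-b'}\|f\|_{H^{-b'}}$, valid because $b'<1/2$. The second is $\psi_\tau(t)\,g(t)$ with $\hat g=\mathbbm 1_{|\lambda|>1/\tau}\hat f/(i\lambda)$. Here we use that multiplication by $\psi_\tau$ is bounded on $H^b$ uniformly for $\tau<1$, up to harmless powers. This gives $\|g\|_{H^b}\lesssim \|\langle\lambda\rangle^{b-1}\mathbbm 1_{|\lambda|>1/\tau}\hat f\|_{L^2}\le \tau^{1-b-b'}\|f\|_{H^{-b'}}$, since $\langle\lambda\rangle^{b-1+b'}\le\tau^{1-b-b'}$ on that region because $b+b'<1$.

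The main obstacle is the multiplier bound for $\psi_\tau$ on $H^b$ with constants uniform in $\tau$. The issue is that $\|\psi_\tau\|_{H^b}$ scales like $\tau^{1/2-b}$, which blows up as $\tau\to0$. We plan to handle this by the scaling $t\mapsto t/\tau$ in the homogeneous $\dot H^b$ part, together with the bound $\|\psi_\tau h\|_{H^b}\lesssim \|h\|_{H^b}$, proved by commuting $\langle\partial_t\rangle^b$ with $\psi_\tau$. Alternatively, we can absorb the loss by using a slightly smaller $b'$ so that the exponent $\alpha$ stays positive. Nothing in this argument depends on the potential, because $V$ enters only through the eigenvalues $\lambda_n$, and these are conjugated away mode by mode.
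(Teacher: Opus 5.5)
Your route is the paper's route. You conjugate by $S(-t)$ and work mode by mode in the time-Fourier variable (your $\lambda$ is the paper's $\lambda-\lambda_n$). You split at $|\lambda|\sim 1/\tau$, Taylor-expand the low-frequency part, and treat the high-frequency part as a constant term plus an oscillating term. The low-frequency and constant pieces are fine. The only inaccuracy there is that $\|t^k\psi_\tau\|_{H^b}\lesssim C^k\tau^{k+1/2-b}$, with $C$ depending on the support of $\psi$ rather than being independent of $k$; this is harmless against $1/k!$.

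The step that fails as you describe it is the oscillating term $\psi_\tau g$. For $b>1/2$, multiplication by $\psi_\tau$ is \emph{not} bounded on $H^b(\R)$ uniformly in $\tau$. Take $h$ smooth, compactly supported and equal to $1$ on $[-1,1]$. For small $\tau$ one has $\psi_\tau h=\psi_\tau$ and $\|\psi_\tau\|_{H^b}\gtrsim\tau^{1/2-b}\to\infty$. So the bound $\|\psi_\tau h\|_{H^b}\lesssim\|h\|_{H^b}$, which you intend to prove by commuting $\langle\partial_t\rangle^b$ with $\psi_\tau$, is false for general $h$.

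Your fallback of accepting the loss $\tau^{1/2-b}$ gives $\tau^{3/2-2b-b'}$ instead of the stated $\tau^{1-b-b'}$. That exponent is not even positive on the whole range $b+b'<1$ (e.g. $b=0.9$, $b'=0.05$).

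What is missing is the use of the frequency support of $g$. From $\langle\lambda\rangle^b\lesssim\langle\lambda-\mu\rangle^b+\langle\mu\rangle^b$ and Young's inequality,
\[ \|\psi_\tau g\|_{H^b}\lesssim\|\widehat{\psi}\|_{L^1}\,\|g\|_{H^b}+\|\psi_\tau\|_{H^b}\,\|\widehat{g}\|_{L^1}. \]
Since $\widehat g$ lives on $|\lambda|>1/\tau$, the same Cauchy--Schwarz you used for the constant term gives $\|\widehat g\|_{L^1}\lesssim\tau^{1/2-b'}\|f\|_{H^{-b'}}$. Combined with $\|\psi_\tau\|_{H^b}\lesssim\tau^{1/2-b}$ and your bound $\|g\|_{H^b}\lesssim\tau^{1-b-b'}\|f\|_{H^{-b'}}$, this piece is $O(\tau^{1-b-b'}\|f\|_{H^{-b'}})$, as required.

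The paper's own proof skips this point entirely; it only says this term is ``controlled the same but using the fact that we have taken $\tau<1$''. Once patched, your argument is more complete than the paper's.
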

\begin{proof}
Fix $0<\tau<1$ and let $I=[0,\tau]$, we follow the estimation done for the 1-dimensional case in \cite{bourgain1999global} and the method presented in \cite{SB_1994-1995__37__163_0}. First recall that:
\begin{equation*}
\|u\|_{X_{s,b}(I)}=\|S(-t)u(t,.)\|_{H^b(I,H^s(\T^2))}.
\end{equation*}
Thus (\ref{eq:norma}) is equivalent to proving:
\begin{equation}\label{eq:norma_sin_s}
\left\|\int_{0}^t S(-s)\Gamma(u)ds\right\|_{H^b(I,H^s(\T^2))}\leq C\tau^\alpha \left\|\Gamma(u)\right\|_{X_{s,-b'([0,\tau])}}.
\end{equation}
Let us rewrite the integral term in (\ref{eq:norma_sin_s}) as:
\begin{equation*}
\chi(t)\sum_n   e_n(x) \int_\mathbb{R} d\lambda \widehat{\Gamma(u)}(n,\lambda)\frac{e^{it\lambda}-1}{i\lambda }.
\end{equation*}
Where $\chi(t)$ is a smooth indicator function valued $1$ on $I$ and compactly supported in a neighborhood of $I$ that we will use to estimate the norm later through localization in time. $\widehat{\Gamma(u)}(n,\lambda)$ is the Fourier transform in time of the coefficients of the decomposition in eigenfunctions of $\Gamma(u)$. This can be rewritten\footnote{We lose the $i$.} as the sum of:
\begin{align}
&\chi(t)\sum_n e_n(x)\int_{|\lambda-\lambda_n| > 1/\tau}d\lambda \widehat{\Gamma(u)}(n,\lambda)\frac{e^{it(\lambda-\lambda_n)}}{\lambda - \lambda_n}\label{eq:term1sb}\\
-&\chi(t)\sum_n e_n(x)\int_{|\lambda-\lambda_n| > 1/\tau}d\lambda \widehat{\Gamma(u)}(n,\lambda)\frac{1}{\lambda - \lambda_n}\label{eq:term2sb}\\
&\chi(t)\sum_ne_n(x) \int_{|\lambda-\lambda_n| < 1/\tau}d\lambda \widehat{\Gamma(u)}(n,\lambda)\frac{e^{it(\lambda -\lambda_n)}-1}{\lambda - \lambda_n}\label{eq:term3sb}.
\end{align}
Let us begin by estimating the norm of (\ref{eq:term2sb}):
\begin{equation*}
\left\|(\ref{eq:term2sb})\right\|_{H^{b}(I,H^s(\T^2))}  
\leeq \|\Gamma(u)\|_{X_{s,-b'([0,\tau])}}\|\mathbbm{1}_{|\lambda|> 1/\tau}|\lambda|^{b+b'-1}\|_\infty\leeq \tau^\alpha \|\Gamma(u)\|_{X_{s,-b'([0,\tau])}}.
\end{equation*}
The term (\ref{eq:term1sb}) is controlled the same but using the fact that we have taken $\tau<1$. Finally when it comes to (\ref{eq:term3sb}) we can consider the series decomposition:
\begin{equation*}
(\ref{eq:term3sb})=\sum_{k\geq 1}\frac{(i t)^k}{k!}\sum_n \int_{|\lambda-\lambda_n| \leq 1/\tau}d\lambda \widehat{\Gamma(u)}(n,\lambda)(\lambda - \lambda_n)^{k-1}\chi(t).
\end{equation*}
We can control as above $\|(\ref{eq:term3sb})\|_{H^{b}(I,H^s(\T^2))}$ by:
\[\sum_{k\geq 1} \tau^{1-k} \left\|\frac{t^k}{k!}\chi\right\|_{H^b(I,H^s(\T^2))}\|\Gamma(u)\|_{X_{s,-b'([0,\tau])}}\|\mathbbm{1}_{|\lambda|>1/\tau}|\lambda|^{b'}\|_{L^2(\mathbb{R})}\leq C\tau^\alpha\|\Gamma(u)\|_{X_{s,-b'([0,\tau])}}.\]
\end{proof}
We are now ready to prove proposition \ref{prop:LWP}. 
\begin{proof}[Proof of proposition \ref{prop:LWP}]
We have seen that thanks to (\ref{eq:norma}) that in order to have uniqueness and existence of a solution to (\ref{eq:Non_Linear_altered_equation}) on $[0,\tau]$ it suffices to show that for $\omega$ outside of a set of size $e^{-1/\tau^\alpha}$ that for $u\in e^{it(-\Delta+V)}\Gg_{-\Delta+V^r}(\omega)+B_{X{s,b}}(0,1)$ for $b > 1/2 >b'>0$ such that $b+b'<1$
    \begin{equation*}
        \Big\|:u|u|^2:\Big\|_{X^{-\Delta+V}_{s,-b'}([0,\tau])}\leq C\tau^{\alpha'}
    \end{equation*}
as this would mean that $T$ is a contraction on $e^{it(-\Delta+V)}\Gg_{-\Delta+V^r}(\omega)+B_{X{s,b}}(0,1)$ and therefore there is a unique fixed point that satisfies $Tu=u$. The approximation considerations will follow from the estimations done in order to show that $T$ is indeed a contraction. We first we must adapt the problem so that the nonlinear term's interaction in space may be developed in a suitable manner. We can show that the evolution semi-groups generated by $-\Delta+V$ and $\Delta+V^r$ are close, in the sense that for $s\in \R$, $|b|\leq1$  we have that for $0<t<1$ small enough:
\begin{equation}\label{eq:prox_S(t)}
    \Big\|\big(e^{it(-\Delta+V)}-e^{it(-\Delta+V^r)}\big)\phi\Big\|_{X^{-\Delta+V}_{s+\delta,b}}\leeq \|\phi\|_{H^s}.
\end{equation}
In order to prove this we first show that the linear evolutions are close for a given $t$ in  $H^s$ and then in  $X_{s,b}$. Once again (\ref{eq:prox_S(t)}) is equivalent to:
\begin{equation*}
    \Big\|\big(1-e^{-it(-\Delta+V)}e^{it(-\Delta+V^r)}\big)\phi\Big\|_{H^{s+\delta}_xH^b_t}\leeq \|\phi\|_{H^s}.
\end{equation*}
This gain of regularity is crucial as it allows us to treat on the one hand the linear term as being the linear evolution of $\Gg_{-\Delta+V^r}(\omega)$ plus a regular term. We have that thanks to proposition \ref{prop:comutadorlaplmasv} that if $u=e^{it(-\Delta+V^r)}\phi$ for $\phi\in H^s(\T^2) $ that $u$ solves the Cauchy problem
\begin{equation*}
    \begin{cases}
        (i\partial_t-\Delta+V)u =V^au 
        \\
        u(0,x)=\phi.
    \end{cases}
\end{equation*}
Then writing it under its Duhamel form, $u$ satisfies:
\begin{equation*}
    e^{it(-\Delta+V)}\phi-u =i \int_{0}^t e^{i(t-s)(-\Delta+V)}V^a u ds=i \int_{0}^t e^{i(t-s)(-\Delta+V)}V^a e^{is(-\Delta+V^r)}\phi ds
\end{equation*}
yielding
\begin{equation}\label{eq:calculo_norma_xsb_dif}
    \big(1-e^{-it(-\Delta+V)}e^{it(-\Delta+V^r)}\big)\phi=i \int_{0}^t e^{i(-s)(-\Delta+V)}V^a e^{is(-\Delta+V^r)}\phi ds.
\end{equation}
On the other hand if $w=e^{it(-\Delta+V)}\varphi$ we let $\nu=(1+Q)w$, $\nu$ solves:
\begin{equation*}
    \begin{cases}
        (i\partial_t-\Delta+V^r)\nu =R(1+Q)^{-1}\nu 
        \\
        \nu(0,x)=(1+Q)^{-1}\varphi.
    \end{cases}
\end{equation*}
and therefore:
\begin{equation*}
    \nu=e^{it(-\Delta+V^r)}\varphi+ e^{it(-\Delta+V^r)}Q\varphi -i \int_{0}^t e^{i(t-s)(-\Delta+V^r)}R(1+Q)^{-1}\nu ds.
\end{equation*}
From this follows that for $0<t<\tau$, $\tau$ small enough that the two terms on the right can be written as $\widetilde{R}\varphi$ with $\widetilde{R}\in \Ll^{-\delta}$, yielding
\begin{equation*}
    \Big\|\big(e^{it(-\Delta+V)}-e^{it(-\Delta+V^r)}\big)\phi\Big\|_{H^{s+\delta}}\leeq \|\phi\|_{H^s}.
\end{equation*}
Coming back to (\ref{eq:calculo_norma_xsb_dif}) we have:
\begin{equation*}
    -i \int_{0}^t e^{i(-s)(-\Delta+V)}V^a e^{is(-\Delta+V^r)}\phi ds.=-i \int_{0}^t e^{i(-s)(-\Delta+V^r)}V^a e^{is(-\Delta+V^r)}\phi + \widetilde{R}\phi \ ds
\end{equation*}
yielding
\begin{equation}\label{eq:etiqueta_local_1}
    \Big\|\big(1-e^{-it(-\Delta+V)}e^{it(-\Delta+V^r)}\big)\phi\Big\|_{H^{s+\delta}_xH^b_t}\leeq \Big\|\int_{0}^t e^{i(-s)(-\Delta+V^r)}V^a e^{is(-\Delta+V^r)}\phi ds \Big\|_{H^{s+\delta}_xH^b_t}+\|\phi\|_{H^s}.
\end{equation}
Finally, using the same estimates as in the proof of proposition \ref{prop:comutadorlaplmasv} and the same arguments as those used in lemma \ref{lemma:estimacion_1_dualidad} we can bound the first term in the right hand of (\ref{eq:etiqueta_local_1}) for a $\delta_0<\delta/2$, delta being granted in lemma \ref{lem:Partition}.

This means that for $\tau$ small enough and some $s>0$ a $u\in e^{it(-\Delta+V)}\Gg_{-\Delta+V^r}(\omega)+B_{X{s,b}}(0,1)$ may be written as $u\in e^{it(-\Delta+V^r)}\Gg_{-\Delta+V^r}(\omega)+B_{X{s,b}}(0,C(V))$, we have then that we can write 
\begin{equation*}
    e^{it(-\Delta+V^r)}\Gg_{-\Delta+V^r}(\omega)=\sum_{n} \frac{g_n^\omega}{(\lambda_n^r)^{1/2}}e_n^r(x)e^{it\lambda_n^r}.
\end{equation*}
In the following we will write $e_n^r\leftrightarrow e_n$ and $\lambda_n^r\leftrightarrow\lambda_n$ to ease the notation. Take the dyadic decomposition of each of these terms:
\begin{equation*}
u=\sum_{N\in 2^{\N}} \beta_N+\gamma_N=\sum_{N\in 2^{\N}}\sum_{|n|\sim N}\frac{g_n^\omega}{(\lambda_n^r)^{1/2}}e_n^r(x)e^{it\lambda_n^r}+\gamma_N.
\end{equation*}
With $\gamma_N\in B_{X{s,b}}(0,C(V)) $. In the following we work with the dyadic decomposition of $u$ as having a control by a negative power of $N$ suffices to grant summability and thus an estimate on the norm. The dyadic decomposition means that when it comes to estimating contributions we can substitute $\lambda_n^{1/2} \leftrightarrow N$ for $|n|\sim N$. In the following we commit an abuse of notation when writing $u_i=\gamma$ (resp.$\beta$) we mean that $u_i$ is of type $\gamma$ resp.$\beta$. Let us decompose $:u|u|^2:=u|u|^2 -2u \int|u|^2$ into its eigenfunctions, and divide it as:
\begin{align}
&\sum_{\substack{n_2\neq n_1 \\n_2 \neq n_3}} {u}(n_1)\overline{{u}(n_2)}{u}(n_3)e_{n_1}\overline{e_{n_2}}e_{n_3}\label{eq:terminos_decomp_1}\\
+&\sum_{n_2\neq n_1} {u}(n_1)|{u}(n_2)|^2e_{n_1}(|e_{n_2}|^2-1)\label{eq:terminos_decomp_2}\\
+&\sum_{n\in\mathbb{Z}^2}{u}(n)|{u}(n)|^2e_n(|e_n|^2-2)\label{eq:terminos_decomp_3}.
\end{align}
This division is taken so as to be able to make use of the independence of the gaussians. The secon term does not appear in the cases without a potential as we count with a basis of eigenfunctions with constant module $1$. It is this very term that can be used to show that the solution in the sphere deviates from a linear evolution of the initial data on the sphere. 

When it comes to approximating the solution by the truncated equation we can do the following gauge change:
\begin{equation}\label{eq:NLSE_3}
(\ref{eq:NLSE_2})\Leftrightarrow \Big(i\partial_t -\Delta  +V\Big)u^N =-P_N(u^N|u^N|^2-2u^N\int|u^N|^2)+c_N^{(r)}(\omega)
\end{equation}
That allows for the same decomposition in the truncated case. The $u(n)$ depend on $t\in[0,\tau]$ but this division in three terms is independent of time as the $L^2(\T^2)$ is a conserved quantity by the flow. Finally the term (\ref{eq:terminos_decomp_3}) is trivially controlled. Without loss of generality we can assume that $C(V)=1$, as the estimates will hold up to a factor that only depends on $V$, for $\tau$ small enough. 
\subsection{Trilinear expression}
We begin by controlling the trilinear expansion (\ref{eq:terminos_decomp_1}), in order to do this let us write $N^1,N^2,N^3$ the decreasing ordering of $N_i$'s, with $u^1,u^2,u^3$ the corresponding factors. We can assume that $N_1=N^1$, because if we have that $N_2>2(N_1+N_3)$ the resonance of the time frequencies will grant us an $N_2^{-2}$ factor that will grant us the norm control. Thus we are to control the following cases:
\begin{center}
\begin{tabular}{cc}
\begin{tabular}{ |p{1cm}||p{1cm}|p{1cm}|p{1cm}|  }
\hline
\multicolumn{4}{|c|}{Case $N_2=N^2$ and $N_3=N^3$} \\
\hline
& $u_1$ & $u_2$ & $u_3$ \\
\hline
A&$\gamma$&$\gamma$&$\gamma$\\
B&$\gamma$&$\gamma$&$\beta$\\
C&$\gamma$&$\beta$&$\gamma$\\
D&$\gamma$&$\beta$&$\beta$\\
E&$\beta$&$\gamma$&$\gamma$\\
F&$\beta$&$\gamma$&$\beta$\\
G&$\beta$&$\beta$&$\gamma$\\
H&$\beta$&$\beta$&$\beta$\\ 
\hline
\end{tabular}
\quad
\begin{tabular}{ |p{1cm}||p{1cm}|p{1cm}|p{1cm}|  }
\hline
\multicolumn{4}{|c|}{Case $N_3=N^2$ and $N_2=N^3$} \\
\hline
& $u_1$ & $u_2$ & $u_3$ \\
\hline
A'&$\gamma$&$\gamma$&$\gamma$\\
B'&$\gamma$&$\gamma$&$\beta$\\
C'&$\gamma$&$\beta$&$\gamma$\\
D'&$\gamma$&$\beta$&$\beta$\\
E'&$\beta$&$\gamma$&$\gamma$\\
F'&$\beta$&$\gamma$&$\beta$\\
G'&$\beta$&$\beta$&$\gamma$\\
H'&$\beta$&$\beta$&$\beta$\\
\hline
\end{tabular}
\end{tabular} 
\end{center}

\subsubsection{Cases A,B,A' and C'}
We can bound by testing against a function $\|c\|_{X_{-s,b'}}\leq1$. And we get the bound, cf. \cite{bourgain1999global} :
\begin{equation*}
\int|u_1u_2u_3c|\leq (N^2)^\varepsilon\|u^3\|_{X_0,b}\|u^2\|_{X_0,b}\|u^1\|_{X_s,b}.
\end{equation*}
Thus knowing that $N_i^\varepsilon\|u_i\|_{X_0}\leq\|u_i\|_{X_s}$ we have that the cases A,B,A' and C' are solved. We may begin now working on the cases where the highest frequency coresponds to the term in $X_{s,b}$.

\subsubsection{Term standarization}\label{sect:term_standarization}
In order to make use of the counting lemmas we will rewrite the $\gamma$ terms in the same way as the $\beta$ terms but instead of writing them as $e^{it(-\Delta+V^r)}\phi$ we will write them as  $e^{it(-\Delta)}\phi$ as when estimating the contribution of some cases we need the fourier localization. We are able to show that in a sense $u\in X_{s,b}$ then $S(-t)u\approx u_0\in H^s(\T^2)$. We can divide our cases in two, depending on the nature of the highest frequency. Let us begin by supposing that the highest frequency term is $\gamma$, meaning that $u_2$ must be probabilistic else we are in cases, A,B,A' or C'. Let us estimate $(N^1)^s|\int u_1\overline{u_2}u_3\overline{c}|$. We can take $c\sim \Pi_{|n|\leq 4N^1}(c)$ as $|n_1-n_2+n_3|\leq 3N^1$. Let $\mathcal{J}$ be a partition of the set $\{|n|\sim N^1 \}$ in subsets of size $(N^2)^2$, we have by orthogonality that ,letting $\Pi_J$ the projection onto the $-\Delta+V^r$ eigenspaces corresponding to $J\in \mathcal{J}$,
\begin{equation}\label{eq:localtag2}
(N^1)^s\Bigg|\int_{\T^2\times[0,\tau]}u^1u^2u^3c\Bigg|\leeq (N^1)^s \sum_{J\in \mathcal{J}}\int |\Pi_Ju^1| |\Pi_Jc| |u^2||u^3|,
\end{equation}
now through Hölder we get
\begin{equation*}
(\ref{eq:localtag2})\leq \sum_{J\in \mathcal{J}} (N^1)^s\|u^2\|_{L^\infty(\T^2\times[0,\tau])}\| \Pi_Ju^1\|_{L^3(\T^2\times[0,\tau])}\| u^3\|_{L^3(\T^2\times[0,\tau])}\| \Pi_Jc \|_{L^3(\T^2\times[0,\tau])}.
\end{equation*}
We have the further control thanks to (\ref{eq:norma_gaus_infty}), (\ref{coro:stricartz}) and orthogonality :
\begin{align*}
\sum_{J\in \mathcal{J}}\tau^\alpha&(N^1)^s(N^2)^\varepsilon(N^3)^\varepsilon\|u^3\|_{X_{0,1/3}}\|\Pi_Ju^1\|_{X_{0,1/3}}\|\Pi_Jc\|_{X_{0,1/3}}\\ \leq \tau^\alpha &(N^2)^\varepsilon\|u^1\|_{X_{s,1/3}}\|u^3\|_{X_{0,1/3}}\|c\|_{X_{0,1/3}}.
\end{align*}
Forcing once again $b>b'>1/3>0$. Writing the $u_i$ as
\begin{equation*}
\sum_{|n|\sim N }\int_\mathbb{R} d\lambda \widehat{u_n}(\lambda)e^{i(n\cdot x+\lambda t)}
\end{equation*}
this last expression is written as:
\begin{align}\label{eq:Desarrollo_explicito_de_normas_xsb_1}
\tau^\alpha(N^2)^\varepsilon&\Bigg(\sum_{|n_1|\sim N^1} |n_1|^{2s}\int d\lambda_1 \la\lambda_1-|n_1|^2\ra^{2/3}|u_{n_1}(\lambda_1)|^2\Bigg)^{1/2} \times\nonumber \\ &\Bigg(\sum_{|n_3|\sim N^3} \int d\lambda_3 \la\lambda_3-|n_3|^2\ra^{2/3}|u_{n_3}(\lambda_3)|^2\Bigg)^{1/2} \times \\
&\Bigg(\sum_{|n_0|\sim N^1} \int d\lambda_0 \la\lambda_0-|n_0|^2\ra^{2(1/3-b')}|c_{n_0}(\lambda_0)|^2\Bigg)^{1/2}\nonumber.
\end{align}
We have therefore that when $u^1=\gamma$ or $u^3=\gamma$  that the contribution of $u_{n_i}(\lambda_i)$ when $|\lambda_i-|n_i|^2| > (N^2)^{\varepsilon}$ grants us a conclusive result. On the other hand the same applies to $c_{n_0}(\lambda_0)$ for $|\lambda_0-|n_0|^2| > (N^2)^{\varepsilon}$. It is in our interest to take the fourier decomposition of the $\gamma$ terms instead of the decomposition in eigenfunctions as we require a degree of directionality. We can do this as the bourgain spaces defined by $-\Delta$ and $-\Delta+V$ are equivalent. Rewrite for $u=\gamma$:
\begin{align}
u&=\int d\lambda \la\lambda\ra^{-b}e^{i\lambda t}\Big(\la\lambda\ra^{2b}\sum_k|k|^{2s}|u_k(\lambda+|k|^2)|^2\big(\sum_n a_{n}(\lambda)e^{i(n\cdot x+|n|^2t) }\big)^2\Big)^{1/2}
\label{eq:rewrite_gamma}\\&= \int d\lambda \la\lambda\ra^{-b}e^{i\lambda t}\Big(\la\lambda\ra^{2b}\sum_k|k|^{2s}|u_k(\lambda+|k|^2)|^2
\Big)^{1/2}\Big(\sum_n a_{n}(\lambda)e^{i(n\cdot x+|n|^2t)}\Big). \nonumber
\end{align}
Where the $a_n(\lambda)$ are a unitary decomposition of $u$ in the sense:
\[a_n(\lambda)=\frac{\widehat{u_n}(\lambda+|n|^2)}{\Bigg(\sum_k|k|^{2s}|u_k(\lambda+|k|^2)|^2\Bigg)^{1/2}}.\]
Using C-S and the fact that by construction $\sum |n|^{2s}|a_n(\lambda)|^2\leeq1 $ we have the bound for $b>1/2$:
\begin{equation}\label{eq:restrict_log}
\int d\lambda \la\lambda\ra^{-b}\Big(\la\lambda\ra^{2b}\sum_k|\lambda_k|^{s}|u_k(\lambda+\lambda_k)|^2
\Big)^{1/2}\leq C(b).
\end{equation}
This together with the estimations over the $\lambda_i$ being close to the $\lambda_{n_i}$ from (\ref{eq:Desarrollo_explicito_de_normas_xsb_1}), means that up to some factor $C(b)$ we can write the terms $u_i=\gamma$ as  
\begin{equation*}
u_i=e^{i\eta_i t}\sum_na_{n}e^{i(n\cdot x+|n|^2t)}
\end{equation*}
for some $\eta_i\leeq (N^2)^\varepsilon$ with $\sum |n|^{2s}|a_n(\lambda)|^2\leeq1 $. Thus we have to bound :
\begin{equation}\label{eq:norma2}
\tau^\alpha(N^2)^\varepsilon\Bigg(\int d\lambda \la\lambda\ra^{-2b'} \sum_{{|n_0|\leq N^1}}\Bigg|\sum_{\substack{|n_i|\sim N_i \ \ i=1,2,3 \\ \Pp}}a_1(n_1)a_2(n_2)a_3(n_3)\gamma_{n_1,n_2,n_3}^{n_0}(\lambda)\Bigg|^2\Bigg)^{1/2}
\end{equation}
With
\begin{equation*}
\gamma_{n_1,n_2,n_3}^{n_0}(\lambda) = \Pi_\lambda \big( e^{-it(-\Delta+V^r)} \la e^{in_1\cdot x}\overline{e_{n_2}}\widetilde{e}_{n_3}e^{i(\rho+|n_1|^2-\lambda_{n_2}+\widetilde{\lambda_{n_3}}) t},e_{n_0}\ra \big).
\end{equation*}
Where $\widetilde{e}_{n_3}$ is either $e_{n_3}^r$ or $e^{in_3\cdot x}$ depending on weather $u_3$ is $\gamma$ or $\beta$, same for $\widetilde{\lambda_{n_3}}=\lambda_{n_3}^r$ or $|n_3|^2$ 
Here $|\rho|\leeq (N^2)^\varepsilon $. In the following we will loose the $\tau^\alpha$ so as to lighten the writing. The change from $X_{s,b}^{-\Delta+V^r}\leftrightarrow X_{s,b}^{-\Delta}$ decomposition affects the relation between the $n_i$ in the sense that we no longer forcibly have $n_2\neq n_1,n_3$, we introduce the condition $\Pp$ to take this into acount. For example in the case D $\Pp=\{n_2\neq n_3\}$ while the case C sees $\Pp$ doesn't add any constraint. Now let us suppose $u^1$ is random in the same way as above we have the bound:
\begin{align}\label{eq:Desarrollo_explicito_de_normas_xsb_2}
(N^1)^{s+\varepsilon}&\Bigg(\sum_{|n_1|\sim N^1} |n_2|^{2s}\int d\lambda_2 \la\lambda_2-|n_2|^2\ra^{2/3}|u_{n_2}(\lambda_2)|^2\Bigg)^{1/2} \times\nonumber \\ &\Bigg(\sum_{|n_3|\sim N^3} \int d\lambda_3 \la\lambda_3-|n_3|^2\ra^{2/3}|u_{n_3}(\lambda_3)|^2\Bigg)^{1/2} \times \\
&\Bigg(\sum_{|n_0|\sim N^1} \int d\lambda_0 \la\lambda_0-|n_0|^2\ra^{2(1/3-b')}|c_{n_0}(\lambda_0)|^2\Bigg)^{1/2}\nonumber.
\end{align}
With $|\lambda_0-|n_0|^2|<(N^1)^{\alpha s}$ and if $u_i=\gamma$, for $i=1$ or $i=2$, we can further suppose $|\lambda_i-|{n_i}|^2|<(N^1)^{\alpha s}$, for some $\alpha>s/(b''-b')>0$. The previous bound (\ref{eq:norma2}) becomes:
\begin{equation}\label{eq:norma3}
(N^1)^{s+\varepsilon}\Bigg(\int d\lambda \la\lambda\ra^{-2b'} \sum_{{|n_0|\leq N^1}}\Bigg|\sum_{\substack{|n_i|\sim N_i \ \ i=1,2,3 \\ \Pp }}a_1(n_1)a_2(n_2)a_3(n_3)\gamma_{n_1,n_2,n_3}^{n_0}(\lambda)\Bigg|^2\Bigg)^{1/2}.
\end{equation}
This time
\begin{equation*}
\gamma_{n_1,n_2,n_3}^{n_0}(\lambda) = \Pi_\lambda \big( e^{-it(-\Delta+V^r)} \la \tilde{e}_{n_1}\overline{\tilde{e}_{n_2}}\tilde{e}_{n_3}e^{i(\rho+\widetilde{\lambda_{n_1}}-\widetilde{\lambda_{n_2}}+\widetilde{\lambda_{n_3}}) t},e_{n_0}\ra \big).
\end{equation*}
In the first case (\ref{eq:norma2}) thanks to the estimations done by duality in (\ref{eq:Desarrollo_explicito_de_normas_xsb_1}) we can take $|\lambda|\leq (N^2)^{\varepsilon}$ and in (\ref{eq:Desarrollo_explicito_de_normas_xsb_1}) we can take $|\lambda|\leq (N^1)^{\alpha s}$ either way we may take the supremum over the $\lambda$ and then just integrate the $\la\lambda\ra^{-2b'}$ as normal up to $(N^1)^{\alpha s} $ or $ (N^2)^\varepsilon$ which will show conclussive. Thus in the following we look at $\gamma_{n_1,n_2,n_3}^{n_0}(\lambda_0) \to \gamma_{n_1,n_2,n_3}^{n_0}$ for some fixed $\lambda_0$. Let us study the $\gamma_{n_1,n_2,n_3}^{n_0}$ and develop explicitly\footnote{We will suppose in the following that $|n_i| $ is big enough such that we can take $\Pi_{\alpha(n)}e_n=e_n$, the estimates on $\gamma$ hold for the smaller $|n|.$ } $\tilde{e}_{n_1}\overline{\tilde{e}_{n_2}}\tilde{e}_{n_3}$:
\begin{equation*}
    \sum_{\substack{k_i\in\Omega_{\alpha(n_i)}\\i=1,2,3}} \chi_{n_1}^{k_1}\overline{\chi_{n_2}^{k_2}}\chi_{n_3}^{k_3}e^{i(k_1-k_2+k_3)\cdot x} =\sum_{\substack{k_i\\i=1,2,3}} \chi_{n_1}^{k_1}\overline{\chi_{n_2}^{k_2}}\chi_{n_3}^{k_3}\sum_{n_0} \overline{\chi_{n_0}^{k_1-k_2+k_3}}e_{n_0}(x)
\end{equation*}
where the $\chi_{n_i}^{k_i}$ are either $\delta_{n_i=k_i}$, when $\tilde{e}_{n_i}=e^{in_i\cdot x }$, or the $\chi_n^k$ of the Fourier decomposition of the $e_n^r$ when $\tilde{e}_{n_i}=e_{n_i}$. We have used the fact that the $\chi_n^k$ are the coefficients of an orthonormal transformation on $L^2(\T^2)$ from the $\{e_n\}_n\leftrightarrow \{e^{ik\cdot x}\}_k$ and therefore symmetric. We arrive at 
\begin{equation*}
    \gamma_{n_1,n_2,n_3}^{n_0}=\sum_{\substack{k_i\\i=1,2,3}} \chi_{n_1}^{k_1}\overline{\chi_{n_2}^{k_2}}\chi_{n_3}^{k_3}\overline{\chi_{n_0}^{k_1-k_2+k_3}}\delta(\widetilde{\lambda_{n_1}}-\widetilde{\lambda_{n_2}}+\widetilde{\lambda_{n_3}}-\lambda_{n_0}=\rho).
\end{equation*}
Rewrite using the previous notation for $(n_1,n_2,n_3,n_0)$ fixed $k_i=n_i+\mu^1_iu_i^1+\mu_i^2u^2_i$ with $u_i^1=n_i/|n_i|$ and $u_i^2$ a unitary vector that completes an orthonormal base of $\R^2$. Define $n_1-n_2+n_3-n_0=k$ we have that $k=\mu^1_0u_0^1+\mu_0^2u^2_0-(\mu^1_1u_1^1+\mu_1^2u^2_1)+\mu^1_2u_2^1+\mu_2^2u^2_2-(\mu^1_3u_3^1+\mu_3^2u^2_3)$ thus $\la k \ra \le \prod_{i,j}\la\mu_i^j \ra$. Which means that:
\begin{align}\label{eq:demo_gamma_sumable}
   &\Big|\la k\ra^2\sum_{\substack{k_i\\i=1,2,3}} \chi_{n_1}^{k_1}\overline{\chi_{n_2}^{k_2}}\chi_{n_3}^{k_3}\overline{\chi_{n_1-n_2+n_3+k}^{k_1-k_2+k_3}}\Big|^2 \leeq\Bigg(\sum_{\substack{k_i(\mu_i^1,\mu_i^2)\\i=1,2,3}} \prod_{\substack{j=1,2\\i=1,2,3}}\la\mu_i^j\ra^4\big| \chi_{n_1}^{k_1}{\chi_{n_2}^{k_2}}\chi_{n_3}^{k_3}\big|^2\Bigg)\times \nonumber \\ &\la k\ra^4\Bigg(\sum_{\substack{k_i(\mu_i^1,\mu_i^2)\\i=1,2,3}} \prod_{\substack{j=1,2\\i=1,2,3}}\la\mu_i^j\ra^{-4}\big| \chi_{n_0}^{k_1-k_2+k_3}\big|^2\Bigg) \leeq F \sum_{k_0}\big| \chi_{n_0}^{k_0}\big|^2\leq (N^2)^\varepsilon .
\end{align}
The first inequality coming from Cauchy Schwartz, for the second inequality we use the second control in lemma \ref{lem:control_e_f} to bound the first sum by a constant. For the second factor we have to distinguish the cases. If we are in the cases C,B',D,D' $\chi_{n_1}^{k_1}=\delta_{n_1=k_1}$ and thus for a fixed $k_0$ there is at most $(N^3)^{\rho}$ duos of $(k_2,k_3)$ such that $k_0=n_1-k_2+k_3$ as well as the previously stated $\la k \ra \prod_{i,j}\la\mu_i^j \ra^{-1}\leeq 1$, meaning that we take $F=(N^3)^{\rho}$. Furthermore in the cases C,C' $\chi_{n_3}^{k_3}=\delta_{n_3=k_3}$ grants $F=1$. On the other hand for the rest it suffices to take $F=(N^2)^\rho$. Finally it we can apply the first control from lemma \ref{lem:control_e_f} to get the last inequality. This yields:
\begin{equation*}
    | \gamma_{n_1,n_2,n_3}^{n_1-n_2+n_3+k}|^2\le F \delta(\widetilde{\lambda_{n_1}}-\widetilde{\lambda_{n_2}}+\widetilde{\lambda_{n_3}}-\lambda_{n_1-n_2+n_3+k}=\rho)\la k\ra ^{-4}.
\end{equation*}
Now we introduce the counting lemmas that will allow us to gain regularity by limiting the number of resonant cases. but first remark that $\widetilde{\lambda_{n_1}}-\widetilde{\lambda_{n_2}}+\widetilde{\lambda_{n_3}}-\lambda_{n_0}=\rho \implies |n_1|^2-|n_2|^2+|n_3|^2-|n_0|^2=\rho+C$ with C being uniformly controlled by a constant depending on $V$. and thus we can control, up to a constant, the set of resonances by their integer version. We thus want to adapt the lattice counting lemmas given in \cite{2D_Defocusing}. Let for $\rho\in \R^+$, $n_0\in\Z^2$, and $k\in \Z^2$, $\Lambda_{n_0,k,\rho}(N_1,N_2,N_3):=$
\begin{align*}
&\#\left\{(n_1,n_2,n_3)\in (\Z^2)^3 \ | \ \substack{n_2\neq n_1 \\ \ n_2\neq n_3 }, \substack{|n_i|\sim N_i \\ i=1,2,3}, n_1-n_2+n_3=n_0; \  |n_1|^2-|n_2|^2+|n_3|^2-|n_0+k|^2 =\rho \right\}\\
&=\#\left\{(n_1,n_2,n_3)\in (\Z^2)^3 \ | \ \substack{n_2\neq n_1 \\ \ n_2\neq n_3 }, \substack{|n_i|\sim N_i \\ i=1,2,3}, n_1-n_2+n_3=n_0; \  |n_1|^2-|n_2|^2+|n_3|^2-|n_0|^2 =\rho' \right\}.
\end{align*}
With $\rho'=\rho+|k|^2+2\la n_0,k\ra$. This is directly bounded in \cite{2D_Defocusing} by
\begin{lem}\label{lemma:lema_contar_1}
\begin{equation}
\Lambda_{k,n_0,\rho}(N_1,N_2,N_3)\ll C(\varepsilon)\min \left\{(N_2)^2(N_1\wedge N_3)^\varepsilon,N^2N^3(N^3)^\varepsilon\right\}.
\end{equation}
Where the upper script means that $N^1\geq N^2\geq N^3$ is the ordered set of $N_1,N_2$ and $N_3$.
\end{lem}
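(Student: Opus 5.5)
Fix $n_0$ and the effective resonance value $\rho'$ (which absorbs $|k|^2+2\la n_0,k\ra$). The constraint $n_1-n_2+n_3=n_0$ leaves two free vectors, and the plan is to prove each of the two bounds in the minimum separately.

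The key algebraic step is the factorization valid on the hyperplane $n_0=n_1-n_2+n_3$:
\[
|n_1|^2-|n_2|^2+|n_3|^2-|n_0|^2 = 2\la n_1-n_2,\,n_3-n_2\ra .
\]
So the resonance condition becomes $\la n_1-n_2, n_3-n_2\ra=\rho'/2$. If $\rho'/2$ is not an integer the set is empty, so we may assume $\rho'\in 2\Z$.

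\emph{First bound, $N_2^2(N_1\wedge N_3)^\varepsilon$.} Fix $n_2$, which gives at most $4N_2^2$ choices by the Remark on dyadic shells. Put $a=n_1-n_2$ and $b=n_3-n_2$. Then $a+b=n_0-n_2=:p$ is fixed, $a,b\neq 0$ by $n_2\neq n_1,n_3$, and $\la a,b\ra=\rho'/2$. Substituting $b=p-a$ gives
\[
|a-p/2|^2=|p|^2/4-\rho'/2 ,
\]
that is, $|2a-p|^2=|p|^2-2\rho'=:R$, a fixed integer.
\begin{itemize}
\item If $R\neq 0$, the number of lattice points on this circle is $O(R^\varepsilon)$, by the same Hardy--Landau divisor bound used in Lemma \ref{lem:Strichartz}.
\item We need $R^\varepsilon\lesssim (N_1\wedge N_3)^\varepsilon$. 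The constraint $|n_1|\sim N_1$ means $a$ lies in a disc of radius $\lesssim N_1$ around $-n_2$, and similarly for $b$ with $N_3$. Once these localizations are imposed, only circles of radius $\lesssim N_1\wedge N_3$ (up to the polynomial factor $N_2$) contribute. So the count is $\lesssim (N_1N_2N_3)^{\varepsilon}$, which is acceptable after renaming $\varepsilon$.
\item The degenerate case $R=0$ forces $a=p/2$, a single point.
\end{itemize}

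\emph{Second bound, $N^2N^3(N^3)^\varepsilon$.} Instead fix the vector with the smallest dyadic size, at most $(N^3)^2$ choices. Then count the remaining pair.
\begin{itemize}
\item If $n_3$ (or symmetrically $n_1$) is smallest, the relation $\la n_1-n_2,n_3-n_2\ra=\rho'/2$ together with $n_1=n_0+n_2-n_3$ is a linear-plus-quadratic condition on $n_2$. Completing the square again puts $n_2$ on a circle, with $N^\varepsilon$ points.
\item This would give $(N^3)^2N^\varepsilon$, which is too weak in $N^1$. The fix is to place the smaller factor on a pair of points, exploiting that a lattice circle arc of length $\lesssim N^2$ carries at most $\lesssim N^2$ points. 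Alternatively, fix $n_3$ among $(N^3)^2$ choices and count $n_2$ on the line segment $\{\la n_0-n_3, n_3-n_2\ra\ \text{fixed}\}$ after rewriting: with $n_3$ fixed the resonance condition $\la n_1-n_2, n_3-n_2\ra$ becomes linear in $n_2$ along a direction, giving $\lesssim N^2$ points when the segment has length $\lesssim N^2$.
\item The degenerate direction $n_0=n_3$ is handled separately, since there the condition reads $|n_3-n_2|^2$ fixed (a circle count).
\end{itemize}
The main obstacle I expect is exactly this bookkeeping of the logarithmic loss: it must land on the smallest frequency $N^3$ rather than on $N^1$. Also, since $\rho'$ may be of size $N^2$ (because $k$ is arbitrary), the divisor bound must be applied with $R\lesssim (N^1)^{O(1)}$ and the $\varepsilon$ loss then rescaled.
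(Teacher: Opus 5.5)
The paper gives no proof of this lemma; it quotes it from \cite{2D_Defocusing}. Your argument therefore has to stand on its own, and as written it reaches neither bound. Your starting identity is right up to sign: $|n_1|^2-|n_2|^2+|n_3|^2-|n_0|^2=-2\la n_1-n_2,n_3-n_2\ra$, which is harmless. The content of both bounds is that the $\varepsilon$-loss sits on a \emph{small} frequency and never on $N^1$; this is how the lemma is used later, e.g.\ in Case C.

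\emph{First bound.} With $n_2$ fixed, your lattice variable is $x=2a-p=n_1-n_3$. So the circle $|x|^2=R$ has radius $|n_1-n_3|\sim N_1\vee N_3$, not $\lesssim N_1\wedge N_3$. (Your worry about the size of $\rho'$ is moot, since $R=|n_1-n_3|^2$ on the solution set.) The divisor bound therefore costs $(N_1\vee N_3)^\varepsilon$. This cannot be turned into $(N_1\wedge N_3)^\varepsilon$ by renaming $\varepsilon$: take $N_2,N_3=O(1)$ and $N_1\to\infty$. What is small is the \emph{arc}, not the radius. The conditions $|n_1|<N_1$ and $|n_3|=|n_0+n_2-n_1|<N_3$ put $x$ in a disc of radius $\le 2(N_1\wedge N_3)$, hence on an arc of length $L\lesssim N_1\wedge N_3$. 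By Jarník's theorem (quoted after Proposition \ref{prop:descripcion_Omega}), such an arc carries at most two lattice points unless $r\lesssim L^3$. In that case the divisor bound gives $\lesssim r^{2\varepsilon}\lesssim L^{6\varepsilon}$ points. This short-arc input is the missing idea for the first bound.

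\emph{Second bound.} When $N_3$ is smallest and $n_3$ is fixed, the condition is linear, not quadratic. With $a=n_0-n_3=n_1-n_2$ it reads $\la a,n_3-n_2\ra=\mathrm{const}$. Moreover $a\neq0$ because $n_1\neq n_2$, so the case $n_0=n_3$ you set aside never occurs. Your count, $\lesssim N^2$ points per line times $(N^3)^2$ choices of $n_3$, gives $(N^3)^2N^2$. That is a full factor $N^3$ more than claimed, and no step of your proposal recovers it.

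The saving comes from averaging the spacing of lattice points on these lines, which is $|a|/\gcd(a)$. The line count is then $\lesssim N^2\gcd(a)/|a|+1$. Over the disc $B$ of radius $N_3$ centered at $n_0$ (possibly with $|n_0|\sim N^1$) one has $\sum_{a\in B\setminus\{0\}}\gcd(a)/|a|\lesssim N_3\log N_3$. To see this, write $a=ga'$ with $a'$ primitive. The $a'$ with $|a'|\le 2N_3$ contribute $\lesssim\sum_{|a'|\le 2N_3}(N_3/|a'|+1)/|a'|$, and every other $a\in B$ contributes less than $1/(2N_3)$. Summing gives $\lesssim (N^3)^2+N^2N^3\log N^3$, as required.

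The case $N_1$ smallest is symmetric. When $N_2$ is smallest, the second bound follows from the first, since $N_2^2M^\varepsilon\le N_2^{1+\varepsilon}M$ for $N_2\le M=N_1\wedge N_3$.
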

We need to make account for how many elements there are for fixed $n_i$ this is why we introduce the following lemma.
\begin{lem}\label{lemma:lema_contar_2}
Let for some $N_1,N_2,N_3$ taken dyadic we will note $S^{i_1,i_2,\dots}_{\rho,j_1,{j_2},\dots}(k)$ the set
\[\left\{(n_1,n_2,n_3)\in (\Z^2)^3 \ | \ \substack{n_2\neq n_1 \\ \ n_2\neq n_3 }, \substack{|n_i|\sim N_i \\ i=1,2,3}; \  |n_1|^2-|n_2|^2+|n_3|^2-|n_1-n_2+n_3+k|^2 =\rho \right\},\]
Where the $n_{j_1},n_{j_2},\dots$ are fixed and the $n_{i_1},n_{i_2},\dots$ are the ones we are counting we have that:
\begin{enumerate}[i)]
\item$\#S_{\rho, n_1}^{n_2,n_3}(k)\ll  (N_3)^2(N_2)^\varepsilon$
\item$\#S_{\rho, n_2}^{n_1,n_3}(k)\ll N_1 N_3(N_1\wedge N_3)^\varepsilon$
\item $\#S_{\rho, n_1, n_2}^{n_3}(k)< N_3$
\item $\#S_{\rho, n_1, n_3}^{n_2}(k)< (N_2)^\varepsilon$.
\end{enumerate}
\end{lem}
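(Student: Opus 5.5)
The plan is to reduce each count to a lattice-point count on a circle or a line, using only the linear constraint, the quadratic resonance equation and the dyadic localizations. The key algebraic fact is that, writing $n_0=n_1-n_2+n_3+k$, the resonance expression factors:
\begin{equation*}
|n_1|^2-|n_2|^2+|n_3|^2-|n_1-n_2+n_3+k|^2=2\la n_1-n_2,n_2-n_3\ra-|k|^2-2\la n_1-n_2+n_3,k\ra .
\end{equation*}
For $k=0$ this is Bourgain's identity $2\la n_1-n_2,n_2-n_3\ra=\rho$. For general $k$ the extra terms are affine in the unknowns, so the equation remains a quadric in the free variables. Fixing some of the variables turns it into either a linear equation or a circle equation.

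\textbf{Case iii)}, with $n_1,n_2$ fixed. Set $a=n_1-n_2\neq0$. The equation becomes $2\la a-k,n_3\ra=\rho+|k|^2+\text{const}$, where the constant depends only on $n_1,n_2,k$. Hence $n_3$ lies on a fixed affine line, unless $a=k$. Intersecting this line with the annulus $|n_3|\sim N_3$ leaves at most $\leeq N_3$ lattice points. The degenerate case $a=k$ has to be handled separately, either excluded or absorbed into the $\la k\ra^{-4}$ weights used afterwards.

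\textbf{Case iv)}, with $n_1,n_3$ fixed. The $n_2$-dependence is $-|n_2|^2-|n_1+n_3+k-n_2|^2+\text{linear}$. After completing the square this says that $2n_2-(n_1+n_3+k)$ lies on a circle of radius $R\leeq N_2^2$, up to the fixed shift. As in the proof of Lemma \ref{lem:Strichartz}, the number of lattice points on such a circle is $\leeq R^{\varepsilon}\leeq N_2^{\varepsilon}$ by the divisor bound. Strictly, the circle is centred at a half-integer point, but doubling the coordinates makes it integral.

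\textbf{Case i)}, with $n_1$ fixed. For each of the $\leeq N_3^2$ choices of $n_3$, case iv) gives $\leeq N_2^\varepsilon$ choices of $n_2$. Multiplying yields $N_3^2N_2^\varepsilon$.

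\textbf{Case ii)}, with $n_2$ fixed. Without loss of generality $N_3\le N_1$. First suppose $k=0$. For each of the $\leeq N_3^2$ values of $n_3$, the constraint $2\la n_1-n_2,n_2-n_3\ra=\rho$ puts $n_1$ on a line. Intersecting with $|n_1|\sim N_1$ gives $\leeq N_1/|n_2-n_3|+1$ points. This is too crude, so I would use the hyperbolic parametrization from \cite{2D_Defocusing} instead. Write $x=n_1-n_2$ and $y=n_2-n_3$; then $\la x,y\ra$ is fixed, with $|x|\leeq N_1$ and $|y|\leeq N_1$.

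For general $k$, a shift in $x$ and $y$ by multiples of $k$ puts the equation back in the same bilinear form. I would handle this by splitting according to the size of $|y|=|n_2-n_3|\sim L$:
\begin{itemize}
\item the number of $y$ with $|y|\sim L$ and $|n_3|\sim N_3$ is $\leeq \min(L,N_3)^2$;
\item for each such $y$, the line count gives $\leeq N_1/L+1$ choices of $n_1$.
\end{itemize}
Summing over dyadic $L\le N_3$ gives roughly $N_1N_3\log N_3$, and the range $L>N_3$ contributes $\leeq N_3^2N_1/L$. Both are $\leeq N_1N_3(N_1\wedge N_3)^\varepsilon$.

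\textbf{Main obstacle.} The hardest step is making ii) uniform in $k$ and $\rho$. This needs care with the degenerate configurations in which the line degenerates, namely $y$ parallel to or equal to a $k$-dependent shift. I expect to use the restriction $n_2\neq n_1,n_3$, together with the harmless losses allowed by the $N^\varepsilon$ factors, to control these cases.
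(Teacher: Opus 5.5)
Your overall route is the paper's. You use the same factorization, a line count for iii), a circle count for iv), iv) iterated over $n_3$ for i), and for ii) you fix $n_2$ and put $n_1$ on a line for each $n_3$. However, the degenerate case you defer to the end is a genuine gap, and it cannot be closed, because ii) and iii) are false as stated when $k\neq0$.

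In your shifted variables $X=n_1-n_2+k$, $Y=n_2-n_3-k$, the expression $|n_1|^2-|n_2|^2+|n_3|^2-|n_1-n_2+n_3+k|^2$ equals $2\langle X,Y\rangle+|k|^2-2\langle n_2,k\rangle$. So the only degenerate configurations are $X=0$ or $Y=0$; parallelism plays no role. When $k=0$ the hypotheses $n_2\neq n_1,n_3$ are exactly $X\neq 0$ and $Y\neq 0$, but for $k\neq0$ they are not.

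Concretely, take $n_2$ and any $n_3\neq n_2$ with $|n_3|\sim N_3$, and put $k=n_2-n_3\neq0$ and $\rho=|n_3|^2-|n_2|^2$. Then $n_1-n_2+n_3+k=n_1$, so every $n_1\neq n_2$ with $|n_1|\sim N_1$ is a solution. Hence $\#S^{n_1,n_3}_{\rho,n_2}(k)\gtrsim N_1^2$, which is not $\lesssim N_1N_3(N_1\wedge N_3)^{\varepsilon}$ as $N_1\to\infty$ with $N_3$ fixed. Likewise, $k=n_2-n_1$ and $\rho=|n_1|^2-|n_2|^2$ give $\gtrsim N_3^2$ solutions in iii). (In iii) the $n_3$-dependence is $-2\langle n_1-n_2+k,n_3\rangle$, so your degenerate case should read $n_1-n_2=-k$.)

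So the restriction $n_2\neq n_1,n_3$ cannot control these cases. The statement needs the extra hypotheses $n_1\neq n_2-k$ and $n_3\neq n_2-k$ (only the first in iii)), with the degenerate slices handled separately wherever the lemma is applied. The paper's proof has the same hole. Its case $(a,b)=(0,0)$, i.e.\ $Y=0$, is bounded by $N_1$ although $n_1$ is then unconstrained, and its iii) tacitly assumes $k'\neq0$.

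Even granting those exclusions, two steps fail as written.

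In ii), lattice points on the line $\langle X,Y\rangle=c$ are spaced by $|Y|/\gcd(Y)$, not by $|Y|$. For $Y=(L,0)$ there are $\sim N_1$ of them, so your pointwise bound $N_1/L+1$ is false. You need the paper's decomposition $Y=r(a,b)$ with $(a,b)$ primitive, which gives $\lesssim N_1 r/|Y|+1$ choices of $n_1$. This is followed by the average $\sum_{0\neq Y\in D}\gcd(Y)/|Y|\lesssim N_3\log N_3$ over any disc $D$ of radius $N_3$, obtained by grouping the $Y$ by their primitive part. This average is where the $\varepsilon$-loss really comes from; your dyadic sum $\sum_{L\le N_3}N_1L$ is geometric and carries no logarithm.

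In iv), the claim $R\lesssim N_2^2$ is false: the centre $n_1+n_3+k$ can have size $N_1$, so the divisor bound alone only gives $(N_1+N_3+|k|)^{\varepsilon}$. To get $N_2^{\varepsilon}$, note that the solutions lie on an arc of length $\lesssim N_2$ of a circle of radius $r$. Then use Jarník's short-arc bound, quoted in the remark after Proposition \ref{prop:descripcion_Omega}, which gives $\lesssim N_2r^{-1/3}+1$ points. Either $r\ge N_2^3$ and there are $O(1)$ solutions, or $r<N_2^3$ and $r^{O(\varepsilon)}\lesssim N_2^{O(\varepsilon)}$. Part i) inherits this repair.

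With these changes your argument proves the corrected lemma.
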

\begin{proof}
Begin noting for any $(n_1,n_2,n_3)$ $n_0=n_1-n_2+n_3$, for a fixed $k,\rho$ we have to find the amount of $n_i$ that satisfy:
    \begin{equation}\label{eq:prod_a_calcular_numeros}
        \la n_2-n_1,n_2-n_3 \ra+\la k,n_0\ra=\rho'
    \end{equation}
where $\rho'=(|k|^2+\rho)/2$, we will use different formulations of this equation to obtain our estimates.

For $i)$ consider rewriting (\ref{eq:prod_a_calcular_numeros}) as
\[\la n_2-n_1-k,n_2-n_3 \ra+\la k,n_1\ra=\rho'\]
which yields for $\rho''=\rho'-\la k,n_1\ra$
\[\Bigg| n_2-\frac{n_1+k+n_3}{2}\Bigg|-\Bigg|\frac{n_1+k-n_3}{2}\Bigg|^2=\rho'\]
for a fixed $n_3$ the amount of $n_2$ that satisfy this is bounded by the number of elements in $\Z^2$ that are in a circle of radius smaller than $N^2$, meaning that we get the bound $(N_3)^2(N_2)^\varepsilon$. 

As for $ii)$ we fix $n_2$ and define for some $r>0$ and $a,b$ relatively prime $n_2-n_3-k=r(a,b)$ we have that the equation becomes
\begin{equation*}
    \la n_1,(a,b)\ra= \la n_2',(a,b) \ra +\rho''
\end{equation*}
with $n_2'=n_2-k$ and $\rho''=(-\rho'-\la n_2',k\ra)/r$. If $(a,b)=(0,0)$ we can estimate by $(N_1)$ as we will get that $n_3$ is uniquely decided by $n_2$. Therefore suppose that $b\neq0$. First if $a=0$ we get $N_3$ choices for $n_3$ and then $\la n_2-n_1-k,n_2-n_3 \ra+\la k,n_1\ra=\rho'$ forces only $N_1$ choices for the $n_1$. Finally suppose that $a\neq 0$, and take $n_3>|a|\vee |b|$ we have that the number of $n_3$ so that $n_3>|a|\vee |b|$ for some $A<|a|\le2A$ and $B<|b|\le2B$ such that $n_2'-n_3=r(a,b)$ is at most $N_3/(A\vee B)$. Thus by symmetry in $n_1$ is estimated in the same way by $N_1/(A\vee B)$. We arrive adding over the $A,B$ the control $\sum_{A,B}\sum_{|a|,||b|}N_1/(A\vee B)N_3/(A\vee B)\leeq N_1(N_3)^{1+\varepsilon}$. On the other hand if w have that $n_3<|a|\vee |b|$ we take the fact that there are $N_1/(|a|\vee|b|)+1$ solutions in $n_1$ and therefore there is at most $N_1/N_3$ solutions for fixed $n_3$ and we deduce directly $N_1N_3$ in total. 

For $iii)$ we have that $n_1,n_2$ are fixed and we have to calculate the number of solutions in $n_3$ to $\la n_3,k'\ra=\rho'''$ which is bounded by a dimensionality argument by $N_3$. 

Finally $iv)$ is equivalent to searching the number of $n_2$ solution to $|n_2|^2+\la n_2,k'\ra=\rho'''$ which is bounded by $(N_2)^\varepsilon$.
\end{proof}
\begin{rem}
    It is also important to realize that we can always exchange the role of the $n_1\leftrightarrow n_3$ and so. For example $\#S_{\rho, n_1}^{n_2,n_3}(k)\ll  (N_3)^2(N_2)^\varepsilon \implies \#S_{\rho, n_3}^{n_1,n_2}(k)\ll  (N_2)^2(N_1)^\varepsilon$ this type of exchanges will be very useful later.
\end{rem}
We are now equipped to treat all remaining cases. In the following we will replace $|n_i|\leftrightarrow N_i$ when it comes to estimating as the difference is a factor smaller than $2$. This will show to be mostly a repetition of previous arguments, but we write them down as the introduction of the different forms of $\gamma_{n_1,n_2,n_3}^{n_0}$ will show tricky at times.
\subsubsection{Cases H and H'} We begin by the purely probabilistic cases H and H'. We are in the case (\ref{eq:norma3}) with the three $a_i(n_i)=\frac{g_{n_i}^\omega }{N_i}$. If we first take the terms where $n_1,n_2,n_3$ are different two by two, we have that thanks to subgaussianity and our gaussians being independent the control\footnote{of the squared term}:
\begin{align*}
\frac{(N^1)^{2s+\varepsilon}}{(N_1N_2N_3)^2}&\sum_{{|n_0|\leq N^1}}\Bigg|\sum_{\substack{n_2\neq n_1 \ \ n_2\neq n_3 \\ n_1\neq n_3}} {g_{n_1}^\omega }{g_{n_2}^\omega }{g_{n_3}^\omega }\gamma_{n_1,n_2,n_3}^{n_0}\Bigg|^2\leeq \\ \frac{(N^1)^{2s+\varepsilon}}{(N_1N_2N_3)^2}&\sum_{{k}}\sum_{\substack{n_2\neq n_1 \ \ n_2\neq n_3 \\ n_1\neq n_3}}|\gamma_{n_1,n_2,n_3}^{n_1-n_2+n_3+k}|^2\leeq \\
 \frac{(N^1)^{2s+\varepsilon}}{(N_1N_2N_3)^2}&\sum_{{k}}\sum_{\substack{n_2\neq n_1 \ \ n_2\neq n_3 \\ n_1\neq n_3}}\delta(\lambda_{n_1}-\lambda_{n_2}+\lambda_{n_3}-\lambda_{n_1-n_2+n_3+k}=\rho)({N^2})^\rho \la k\ra ^{-4}
\end{align*}
for $\omega$ outside of a set of size $e^{-C(N_1)^\varepsilon/\tau^{\alpha}}$. As seen before the fact that there is a universal constant such that $|\lambda_n-|n||\le C(V)$ allows us to use lemma's \ref{lemma:lema_contar_2} $iii)$ with $n_1 \leftrightarrow n_3$:
\begin{equation*}
\frac{(N^1)^{2s+\varepsilon}({N^2})^\rho}{(N_1N_2N_3)^2}\sum_{{k}}\sum_{n_2,n_3}\#S_{\rho, n_2, n_3}^{n_1}(k) \la k\ra ^{-4}\le \frac{(N^1)^{1+2s+\varepsilon}({N^2})^\rho}{(N_1N_2N_3)^2}\sum_{{k}}\sum_{n_2,n_3}\la k\ra ^{-4}\leeq (N^1)^{2s+\rho+\varepsilon-1}
\end{equation*}
Which is conclusive for $s<(1-\rho)/2$. On the other hand if $n_1=n_3$ we get
\begin{align*}
\frac{(N^1)^{2s+\varepsilon}}{(N_1)^4(N_2)^2}&\sum_{{|n_0|\leq N^1}}\Bigg|\sum_{\substack{n_2\neq n_1}} {(g_{n_1}^\omega)^2}{g_{n_2}^\omega }\gamma_{n_1,n_2,n_1}^{n_0}\Bigg|^2\leeq \\ \frac{(N^1)^{2s+\varepsilon}}{(N_1)^4(N_2)^2}&\sum_{{k}}\sum_{\substack{n_2\neq n_1}}|\gamma_{n_1,n_2,n_1}^{2n_1-n_2+k}|^2\leeq \frac{(N^1)^{2s+\varepsilon}({N^2})^{\rho+\varepsilon}}{(N_1)^4}
\end{align*}
\subsubsection{Case C} We now look at the estimate (\ref{eq:norma2}) and remark that $N_2=N^2$ and $N_3=N^3$. Using Cauchy-Schwarz we can control the square of sum by:
\begin{align*}\label{eq:TerminoCcontrol}
&(N^2)^{\varepsilon-2}\sum_{{n_0}}\Bigg|\sum_{\substack{|n_i|\sim N_i \ \ i=1,2,3 }}a_1(n_1)g_{n_2}^\omega a_3(n_3)\gamma_{n_1,n_2,n_3}^{n_0}\Bigg|^2\leq\\
&(N^2)^{\varepsilon-2}\sum_{{n_0}}\Bigg(\sum_{n_1,n_2,n_3}\delta(\lambda_{n_1}-\lambda_{n_2}+\lambda_{n_3}-\lambda_{n_0}=\rho)\Bigg)\Bigg(\sum_{n_1,n_2,n_3}|a_1(n_1)|^2|a_3(n_3)|^2|\gamma_{n_1,n_2,n_3}^{n_0}|^2\Bigg).
\end{align*}
Where we have taken $\omega$ outside of a set of size $e^{-C(N_1)^\varepsilon/\tau^{\alpha}}$. The first factor can be bounded for a fixed $n_0$ thanks to lemma \ref{lemma:lema_contar_1} by $N_2(N_3)^{1+\varepsilon}$, yielding:
\begin{align*}
&(N_2)^{\varepsilon-1}(N_3)^{1+\varepsilon}\sum_{{k}}\sum_{n_1,n_2,n_3}|a_1(n_1)|^2|a_3(n_3)|^2|\gamma_{n_1,n_2,n_3}^{n_1-n2+n_3+k}|^2\leeq\\ 
&(N_2)^{\varepsilon-1}(N_3)^{1+\varepsilon-2s}\sum_{k,n_2}\#S_{\rho, n_1, n_3}^{n_2}(k)\la k\ra^{-4}  \leeq ((N^2)^{\varepsilon-1/2}(N^3)^{1/2+\varepsilon-s})
\end{align*}
Therefore we can control the contribution of term C by $ (N^2)^{\varepsilon-s} $ for all $0<s<1/2$. 

\subsubsection{Case B'} This time we use (\ref{eq:norma2}) and we control using Cauchy-Schwarz on the $n_2$ variable.
\begin{align*}
&(N_3)^{\varepsilon-2}\sum_{{n_0}}\Bigg|\sum_{\substack{|n_i|\sim N_i \ \ i=1,2,3 }}a_1(n_1)a_2(n_2)g_{n_3}^\omega \gamma_{n_1,n_2,n_3}^{n_0}\Bigg|^2\leq\\
&(N_2)^{-2s}(N_3)^{\varepsilon-2}\sum_{{n_0,n_2}}\Bigg|\sum_{\substack{|n_i|\sim N_i \ \ i=1,2,3 \\}}a_1(n_1)g_{n_3}^\omega \gamma_{n_1,n_2,n_3}^{n_0}\Bigg|^2.
\end{align*}
Knowing that $a_3$ is gaussian we have, thanks to hypercontractivity, up to restriction in $\omega$:
\begin{align*}
&(N_2)^{-2s}(N_3)^{\varepsilon-2}\sum_{{n_0,n_2,n_3}}\Bigg|\sum_{\substack{|n_i|\sim N_i \ \ i=1,2,3 \\ n_2\neq n_1 \ \ n_2\neq n_3}}a_1(n_1)\gamma_{n_1,n_2,n_3}^{n_0}\Bigg|^2\le \\ &(N_2)^{-2s}(N_3)^{\varepsilon-2}\sum_{n_0,n_2,n_3}\Bigg(\sum_{n_1}|a_1(n_1)|^2|\gamma_{n_1,n_2,n_3}^{n_0}|^{2-\alpha}\Bigg)\Bigg(\sum_{n_1'}|\gamma_{n_1',n_2,n_3}^{n_0}|^\alpha\Bigg).
\end{align*}
If we take $1/2<\alpha$ we have that for a fixed $n_0,n_2,n_3$ that $|\gamma_{n_1',n_2,n_3}^{n_0}|\leeq \delta(\lambda_{n_1}-\lambda_{n_2}+\lambda_{n_3}-\lambda_{n_0}=\rho)\la n_1-n_2+n_3-n_0 \ra^{-4\alpha} $ is summable in $n_1'$ by dimensionality. If we take on top of that $1/2<\alpha<1$ we are left with
\begin{align*}
&(N_2)^{-2s}(N_3)^{\varepsilon-2}\sum_{n_1}|a_1(n_1)|^2\Bigg(\sum_{k,n_2,n_3}|\gamma_{n_1,n_2,n_3}^{n_1-n_2+n_3+k}|^{2-\alpha}\Bigg)\le \\ 
&(N_2)^{-2s}(N_3)^{\varepsilon-2}\sum_{n_1}|a_1(n_1)|^2\Bigg(\sum_{k}\#S_{\rho, n_1}^{n_2,n_3}(k)\la k\ra^{-2(2-\alpha)}\Bigg)\le N_2^{1-2s+\varepsilon}(N_3)^{\varepsilon-1} .
\end{align*}
which can be controlled by $(N_3)^{\varepsilon-2s}$ for $s<1/2$.
\subsubsection{Cases E and E'} It is enough to control case E as it is the same for for E' with the change $N_2 \longleftrightarrow N_3$. In the estimation of (\ref{eq:norma3}) we first use the previous method applied in case B' with the role exchange $n_1 \leftrightarrow n_3$:
\begin{align*}
&(N_1)^{\varepsilon+2s-2}\sum_{{n_0}}\Bigg|\sum_{\substack{|n_i|\sim N_i \ \ i=1,2,3 \\ n_2\neq n_1 \ \ n_2\neq n_3}}g_{n_1}^\omega a_2(n_2)a_3(n_3)\gamma_{n_1,n_2,n_3}^{n_0}\Bigg|^2\leq\\
&(N_1)^{\varepsilon+2s-2}(N_2)^{-2s}\sum_{{n_0,n_1,n_2}}\Bigg(\sum_{n_3}|a_3(n_3)|^2|\gamma_{n_1,n_2,n_3}^{n_0}|^{2-\alpha}\Bigg)\Bigg(\sum_{n_3'}|\gamma_{n_1,n_2,n_3'}^{n_0}|^\alpha\Bigg)\le\\
&(N_1)^{\varepsilon+2s-2}(N_2)^{-2s}(N_2)^{1+\varepsilon}N_1(N_3)^{-2s}\le (N_1)^\varepsilon\Bigg(\frac{N_1}{N_2}\Bigg)^{2s-1}(N_3)^{-2s},
\end{align*}
as always up to restriction in $\omega$. This estimate is not conclusive for all $s<1/2$ so we now get another estimate that we can use to get the desired bound through interpolation. We begin by doing C-S first over $n_3$ then over $n_2$ and we get the estimate:
\begin{equation*}
(N_1)^{\varepsilon+2s-2}(N_3)^{-2s}\sum_{{n_0,n_3}}\Bigg|\sum_{\substack{|n_i|\sim N_i \ \ i=1,2,3 \\ n_2\neq n_1 \ \ n_2\neq n_3}}g_{n_1}^\omega a_2(n_2)\gamma_{n_1,n_2,n_3}^{n_0}\Bigg|^2
\end{equation*}
For a fixed $n_3$ we can see this as the norm of the image of bounded vector of norm $(N_2)^{-s}$by the following random matrix :
\begin{equation*}
\cM_{n_0,n_2}:=\sum_{\substack{|n_1|\sim N_1 \\ n_2\neq n_1 \ \ n_2\neq n_3}} g_{n_1}^\omega \gamma_{n_1,n_2,n_3}^{n_0}
\end{equation*}
Now, adding over the $n_3$ we get the estimation of by the operator norm:
\begin{equation*}
(N_1)^{\varepsilon+2s-2}(N_2)^{-2s}(N_3)^{2-2s}\|\cM^*\cM\|
\end{equation*}
We can estimate the norm of $\cM^*\cM$ by:
\begin{equation}\label{eq:norma_matrx}
\|\cM^*\cM\|\leq \max_{n_0}\Bigg(\sum_{n_2}|\cM_{n_0,n_2}|^2\Bigg)+\sum_{n_0\neq n_0'}\Bigg|\sum_{n_2}\cM_{n_0,n_2}\overline{\cM_{n_0',n_2}}\Bigg|^2.
\end{equation}
For some fixed $n_3,n$, the first term is controlled, up to restriction in $\omega$ by:
\begin{equation*}
   (N_1)^{\varepsilon}\sum_{n_1,n_2}|\gamma_{n_1,n_2,n_3}^{n_0}|^2 \leeq \sum_k \#S_{-\rho, n_1', n_2'}^{n_3'}(k)\la k \ra^{-4}< (N_1)^{\varepsilon}N_2
\end{equation*}
As said before we have exchanged the $n_1'\leftrightarrow n_0,n_2' \leftrightarrow n_3,n_3'\leftrightarrow n_2$. Now for the second term (\ref{eq:norma_matrx}) the control is 
\begin{equation*}
\sum_{n_0\neq n_0'}\Bigg|\sum_{\substack{n_1,n_1'\\n_2}}{g_{n_1}^\omega }\gamma_{n_1,n_2,n_3}^{n_0}\overline{{g_{n_1'}^\omega }\gamma_{n_1',n_2,n_3}^{n_0'}}\Bigg|^2
\end{equation*}
The pairs $\{n_1,n_1'\}$ repeat only twice so we have up to restriction in $\omega$ the following control:
\begin{equation*}
    \sum_{n_0\neq n_0'}\sum_{\substack{n_1,n_1'\\n_2}}|\gamma_{n_1,n_2,n_3}^{n_0}|^2|\gamma_{n_1',n_2,n_3}^{n_0'}|^2=\sum_{k\neq k'}\sum_{\substack{n_1,n_1'\\n_2}}|\gamma_{n_1,n_2,n_3}^{n_1-n_2+n_3+k}|^2|\gamma_{n_1',n_2,n_3}^{n_1'-n_2+n_3+k'}|^2
\end{equation*}
with always $n_1-n_2+n_3+k\neq n_1'-n_2+n_3+k'\implies n_1'+k'\neq n_1+k$. The number of pairs $(n_1,n_2)$ can be bounded, exchanging the role of $n,n_2$ and $n_1,n_3$, by $\#S_{\rho, n_3}^{n_1,n_2}(k)\ll C(\varepsilon)N_1 N_2(N_1\wedge N_2)^\varepsilon$ and on the other hand the choices over the $n'$ are controlled by an argument of dimensionality by $N_1$. Which yields the control:
\begin{equation*}
(N^1)^{\varepsilon+2s}(N_2)^{-4}(N_3)^{1-s}((N_1)^{-2+\varepsilon}(N_2)^{1+\varepsilon})^{1/2}\leq (N_1)^{\varepsilon-1/2}\Bigg(\frac{N_1}{N_2}\Bigg)^{2s-1/2}(N_3)^{2-2s}.
\end{equation*}
Interpolate both controls to get the contribution of this term bounded by $(N_1)^{\varepsilon-s/2}$.
\subsubsection{Case D} We have to control the square root of:
\begin{equation}\label{eq:casoD}
(N^2)^{\varepsilon-2}(N^3)^{-2}\sum_{{n_0}}\Bigg|\sum_{\substack{|n_i|\sim N_i \ \ i=1,2,3 \\  n_2\neq n_3}}a_1(n_1)\overline{g_{n_2}^\omega }
g_{n_3} ^\omega \gamma_{n_1,n_2,n_3}^{n_0}\Bigg|^2\leq.
\end{equation}
In a similar manner as what we did with cases E and E' we can control de random part by its matrix operator norm, that is, we have that for $n_0$ in a subset of size $(N_2)^2$ and $n_1$ in the same subset:
\begin{equation*}
(\ref{eq:casoD})^{1/2}\leq(N_2)^{\varepsilon-1}(N_3)^{-1}\|\cM^*\cM\|^{1/2},
\end{equation*}
where the matrix $\cM$ is defined as:
\begin{equation*}
\cM_{n,n_1}=\sum_{\substack{n_{2,3}\sim N_{2,3} \\ \ \ n_2\neq n_3}} \overline{g_{n_2}^\omega }g_{n_3}^\omega \gamma_{n_1,n_2,n_3}^{n_0}.
\end{equation*}
We can estimate the norm of $\cM^*\cM$ as before by:
\begin{equation*}
\|\cM^*\cM\|\leq \max_n\Bigg(\sum_{n_1\sim N^1}|\cM_{n,n_1}|^2\Bigg)+\Bigg(\sum_{n\neq n'}\Bigg|\sum_{n_1}\cM_{n,n_1}\overline{\cM_{n',n_1}}\Bigg|^2\Bigg)^{1/2}.
\end{equation*}
For the first term in the sum we can restrict in $\omega$ and have for some fixed $n$ :
\begin{equation*}
\sum_{n_1}\sum_{\substack{n_{2,3}\sim N_{2,3} \\ n_2\neq n_3}} |\gamma_{n_1,n_2,n_3}^{n_0}|^2\leeq (N_3)^2(N_2)^{1+\rho+\varepsilon}.
\end{equation*}
This comes from using lemma's \ref{lemma:lema_contar_2} third point exchanging the roles of $n_1,n_3\leftrightarrow n_0,n_2$ and fixing $n_3$. Let us look at the other terms, we have:
\begin{equation*}
\sum_{n_0\neq n_0'}\Big|\sum_{\substack{n_1,n_2,n_2'\\ n_3,n_3'}}\overline{g_{n_2}^\omega }g_{n_3}^\omega \overline{g_{n_3'}^\omega }g_{n_2'}^\omega \gamma_{n_1,n_2,n_3}^{n_0}\overline{\gamma_{n_1,n_2',n_3'}^{n_0'}}\Big|^2.
\end{equation*}
\begin{enumerate}
\item Suppose first that $n_2,n'_2,n_3,n'_3$ are different two by two. We then have the estimate up to restriction in $\omega$:
\begin{equation*}
\sum_{n_0\neq n_0'}\sum_{\substack{n_1,n_2,n_2'\\ n_3,n_3'}}\Big|\gamma_{n_1,n_2,n_3}^{n_0}{\gamma_{n_1,n_2',n_3'}^{n_0'}}\Big|^2.
\end{equation*}
Fixing $n_1$ and applying the $i)$ from lemma \ref{lemma:lema_contar_2} and after adding over the $n_1$ we get $(N_2)^{2+\rho+\varepsilon}(N_3)^4$. 
\item  On the other hand he case $n_2 = n_2'$ but $n_3 \neq n_3'$. We get then:
\begin{align*}
&\sum_{n_0\neq n_0'} \Big|\sum_{\substack{n_1,n_2,\\ n_3,n_3'}}|g_{n_2}^\omega |^2g_{n_3}^\omega \overline{g_{n_3'}^\omega }\gamma_{n_1,n_2,n_3}^{n_0}\overline{\gamma_{n_1,n_2,n_3'}^{n_0'}}\Big|^2\leeq \\ &(N_3)^{\varepsilon}\sum_{\substack{n_0\neq n_0'\\ n_3,n_3'}}\Big(\sum_{\substack{n_1,n_2}}\Big|\gamma_{n_1,n_2,n_3}^{n_0}{\gamma_{n_1,n_2,n_3'}^{n_0'}}\Big|^{2-\alpha}\Big)\Big(\sum_{\substack{n_1,n_2}}\Big|\gamma_{n_1,n_2,n_3}^{n_0}{\gamma_{n_1,n_2,n_3'}^{n_0'}}\Big|^{\alpha}\Big)
\end{align*}
For some $1/2<\alpha<1$ as before. Let us examine for fixed $n_0,n_0',n_3,n_3'$ the second sum is controlled by, up to a $(N_2)^{\alpha\rho}$ factor
\begin{align*}
&\sum_{\substack{n_1,n_2}} \delta\bigg({\substack{\lambda_{n_1}-\lambda_{n_2}+\lambda_{n_3}-\lambda_{n_0}=\rho\\\lambda_{n_1}-\lambda_{n_2}+\lambda_{n_3'}-\lambda_{n_0'}=\rho}}\bigg) \la n_1-n_2+n_3-n_0 \ra^{-4\alpha}\la n_1-n_2+n_3'-n_0' \ra^{-4\alpha}=\\
& \sum_{\substack{n_2,k}} \delta \bigg({\substack{\lambda_{n_2+k}-\lambda_{n_2}+\lambda_{n_3}-\lambda_{n_0}=\rho\\\lambda_{n_2+k}-\lambda_{n_2}+\lambda_{n_3'}-\lambda_{n_0'}=\rho}}\bigg)\la k+n_3-n_0 \ra^{-4\alpha}\la k+n_3'-n_0' \ra^{-4\alpha}\leeq N_2.
\end{align*}
The last inequality is given by the $iii)$ term in lemma \ref{lemma:lema_contar_2}. On the other hand we can see that 
\begin{equation*}
    (N_3)^{\varepsilon}\sum_{\substack{n_0\neq n_0'\\ n_3,n_3'}}\sum_{\substack{n_1,n_2}}\Big|\gamma_{n_1,n_2,n_3}^{n_0}{\gamma_{n_1,n_2,n_3'}^{n_0'}}\Big|^{2-\alpha}\leeq (N_2)^{2+\rho+\varepsilon}(N_3)^3
\end{equation*}
coming from applying $i)$ and then $iii)$, this yields finally $(N_2)^{3+\rho+\varepsilon}(N_3)^3$.
\item Now take the case $n_2\neq n_2'$ but $n_3=n_3'$. We have as before for an $1/2<\alpha<1$ that:
\begin{align*}
&\sum_{n_0\neq n_0'} \Big|\sum_{\substack{n_1,n_2,\\ n_2',n_3}}g_{n_2}^\omega \overline{g_{n_2'}^\omega }|g_{n_3}^\omega |^2\gamma_{n_1,n_2,n_3}^{n_0}\overline{\gamma_{n_1,n_2',n_3}^{n_0'}}\Big|^2\leeq \\ &(N_3)^{\varepsilon}\sum_{\substack{n_0\neq n_0'\\ n_2,n_2'}}\Big(\sum_{\substack{n_1,n_3}}\Big|\gamma_{n_1,n_2,n_3}^{n_0}{\gamma_{n_1,n_2',n_3}^{n_0'}}\Big|^{2-\alpha}\Big)\Big(\sum_{\substack{n_1,n_3}}\Big|\gamma_{n_1,n_2',n_3}^{n_0}{\gamma_{n_1,n_2,n_3}^{n_0'}}\Big|^{\alpha}\Big).
\end{align*}
On the one hand the second sum (for fixed ${n_0, n_0', n_2,n_2'}$) is bounded by $(N_3)^{\varepsilon}$ through $iv)$, on the other hand the rest is bounded by $(N_2)^{2+\rho+\varepsilon}(N_3)^2$, granting a final contribution of $(N_2)^{2+\rho+\varepsilon}(N_3)^{2+\varepsilon}$.
\item Next up is the case $n_2\neq n_3'$ but $n_3=n_2'$. We can achieve the control:
\begin{align*}
&\sum_{n_0\neq n_0'} \Big|\sum_{\substack{n_1,n_2,\\ n_2',n_3}}|g_{n_2}^\omega |^2\overline{g_{n_2'}^\omega }g_{n_3}^\omega \gamma_{n_1,n_2,n_3}^{n_0}\overline{\gamma_{n_1,n_2,n_2'}^{n_0'}}\Big|^2\leeq \\ &(N_2)^{\varepsilon}\sum_{\substack{n_0\neq n_0'\\ n_3,n_2'}}\Big(\sum_{\substack{n_1,n_2}}\Big|\gamma_{n_1,n_2,n_3}^{n_0}{\gamma_{n_1,n_2,n_2'}^{n_0'}}\Big|^{2-\alpha}\Big)\Big(\sum_{\substack{n_1,n_2}}\Big|\gamma_{n_1,n_2,n_3}^{n_0}{\gamma_{n_1,n_2,n_2'}^{n_0'}}\Big|^{\alpha}\Big)\leeq (N_2)^{3+\varepsilon}(N_3)^2.
\end{align*}
Where we can control the second sum by $N_2$ thanks to $iii)$ and the rest is controlled thanks to $i)$ on the full set for fixed $n_1$ and then $iv)$. The symmetric case $n_2= n_3'$ but $n_3\neq n_2'$ is treated exactly the same.
\item Finally we look at the contribution of $n_2=n_3'$ and $n_3=n_2'$. Here we are to control:
\begin{align*}
&\sum_{n_0\neq n_0'} \Big|\sum_{\substack{n_1,n_2,\\n_3}}|g_{n_2}^\omega |^2|g_{n_3}^\omega |^2\gamma_{n_1,n_2,n_3}^{n_0}\overline{\gamma_{n_1,n_3,n_2}^{n_0'}}\Big|^2\leeq \\ &(N_2)^{\varepsilon}\sum_{\substack{n_0\neq n_0'}}\Big(\sum_{\substack{n_1,n_2,n_3}}\Big|\gamma_{n_1,n_2,n_3}^{n_0}{\gamma_{n_1,n_3,n_2}^{n_0'}}\Big|^{2-\alpha}\Big)\Big(\sum_{\substack{n_1,n_2,n_3}}\Big|\gamma_{n_1,n_2,n_3}^{n_0}{\gamma_{n_1,n_3,n_2}^{n_0'}}\Big|^{\alpha}\Big).
\end{align*}
If we look at the second factor we can bound it by, up to a $(N_2)^{\rho}$ factor
\begin{equation*}
\sum_{\substack{k,n_2,n_3}} \delta\bigg({\substack{\lambda_{n_2-n_3+n_0+k}-\lambda_{n_2}+\lambda_{n_3}-\lambda_{n_0}=\rho\\\lambda_{n_2-n_3+n_0'+k_0+k}-\lambda_{n_2}+\lambda_{n_3}-\lambda_{n_0'}=\rho}}\bigg)\la k\ra^{-4\alpha}\la k+k_0\ra^{-4\alpha}\leeq N_3
\end{equation*}
where $k_0=n_0'-n_0\neq 0$, and the $N_3$ comes directly from estimation $iii)$ and the fact that $n_0\neq n_0'$. On the other hand the first sum can be easily bounded by a dimensionality argument on the $n_2$ for a fixed $n_3$ by $N_2$ and then after adding over the $n_3$ by $(N_3)^2(N_2)^{1+\rho}$, thus the contribution of this terms is $(N_2)^{2+\varepsilon+\rho}(N_3)^3$.
\end{enumerate}
Collecting all the previous estimates and the factor $(N_2N_3)^{-1}$ we get that up to restriction in $\omega$ $\|\cM^*\cM\|\leeq (N_2)^{-1/2+\rho+\varepsilon}$, and thus the contribution of case D bounded by $(N_2)^{-1/4+\rho+\varepsilon}$. 
\subsubsection{Case D'} We can adapt the previous approach but with $N_2\le N_3$ in this case, if $N_2\sim N_3$ in the sense that $N_2>(N_3)^{9/10}$ we have that the previous estimate is conclusive. Let us therefore suppose that $N_3 \gg N_2$, we get :
\begin{align*}
    &(N^2)^{\varepsilon-2}(N^3)^{-2}\sum_{{n_0}}\Bigg|\sum_{\substack{|n_i|\sim N_i \ \ i=1,2,3 \\ n_2\neq n_1 \ \ n_2\neq n_3}}a_1(n_1)\overline{g_{n_2}^\omega }g_{n_3} ^\omega \gamma_{n_1,n_2,n_3}^{n_0}\Bigg|^2\leq\\
    &(N^2)^{\varepsilon-2}(N^3)^{-2}\sum_{{n_0,n_2,n_3}}\Bigg|\sum_{n_1}a_1(n_1)\gamma_{n_1,n_2,n_3}^{n_0}\Bigg|^2\le \\
    &(N^2)^{\varepsilon-2}(N^3)^{-2}\sum_{{n_0,n_2,n_3}}\Bigg(\sum_{n_1}|a_1(n_1)|^2|\gamma_{n_1,n_2,n_3}^{n_0}|^{2-\alpha}\Bigg)\Bigg(\sum_{n_1'}|\gamma_{n_1',n_2,n_3}^{n_0}|^\alpha\Bigg).
\end{align*}
We set again $1/2<\alpha<1$ and see that $|\gamma_{n_1',n_2,n_3}^{n_0}|\leeq \delta(\lambda_{n_1}-\lambda_{n_2}+\lambda_{n_3}-\lambda_{n_0}=\rho)\la n_1-n_2+n_3-n_0 \ra^{-4\alpha}(N_2)^{\alpha\rho} $ and then adding over the $n_1$ is like adding over a circle in $\Z^2$, thus summable and constant bounded. As for the first term we can bound by:
\begin{equation*}
(N_3)^{\varepsilon-2}(N_2)^{-2+\rho}\sum_{k,n_1}|a_1(n_1)|^2\#S_{\rho, n_1}^{n_2,n_3}(k)\la k\ra^{4-2\alpha}\leeq (N_2)^\varepsilon \bigg(\frac{N_2}{N_3}\bigg)^{1/2}(N_2)^{\rho}\leeq N_3^{-1/100}.
\end{equation*}
\subsubsection{Cases G and G'} We want to use the previous estimates for terms D and D', which we can use if we suppose that $N_2 \sim N_1$ (in the sense that $N_2 >N_1^{8/10}$) for D and $N_3 \sim N_1$ for D'. For the other case we get 
\begin{align*}
    &(N_1)^{2s-2+\varepsilon}(N_2)^{-2}\sum_{{n_0}}\Bigg|\sum_{\substack{|n_i|\sim N_i \ \ i=1,2,3 \\ n_2\neq n_1 }} g_{n_1} ^\omega \overline{g_{n_2}^\omega }a_3(n_3)\gamma_{n_1,n_2,n_3}^{n_0}\Bigg|^2\leq\\
    &(N_1)^{2s-2+\varepsilon}(N_2)^{-2+\rho}\sum_{n_0}\bigg( \sum_{n_1,n_2,n_3} \delta(\lambda_{n_1}-\lambda_{n_2}+\lambda_{n_3}-\lambda_{n_0}=\rho \bigg)\sum_{n_1,n_2,n_3}|a_3(n_3)|^2|\gamma_{n_1,n_2,n_3}^{n_0}|^2\le \\
    &(N_1)^{2s-2+\varepsilon}(N_2)^{-1+\rho}N_3\sum_{\substack{n_0,n_1\\n_2,n_3}}|a_3(n_3)|^2|\gamma_{n_1,n_2,n_3}^{n_0}|^2\leeq (N_1)^{2s-1+\varepsilon+\rho}(N_2)^{\varepsilon}(N_3)^{1-2s}.
\end{align*}
having first used lemma \ref{lemma:lema_contar_1} and then lemma's \ref{lemma:lema_contar_2} $i)$, with a change of $n_3\leftrightarrow n_1$, This for $(N_1)^{8/10} > N_3$ is conclusive.
\subsubsection{Cases F and F'} Once again we are to see the gaussians as a random operator acting on the deterministic term. We first observe what happens when $N_1 \gg N_2 \gg N_3$ in the same sense as before.
\begin{align*}
    &(N_1)^{2s-2+\varepsilon}(N_3)^{-2}\sum_{{n_0}}\Bigg|\sum_{\substack{|n_i|\sim N_i \ \ i=1,2,3 }} g_{n_1} ^\omega a_2(n_2)g_{n_3}^\omega \gamma_{n_1,n_2,n_3}^{n_0}\Bigg|^2\leq\\
    &(N_1)^{2s-2+\varepsilon}(N_2)^{-2}\sum_{n_0}\bigg( \sum_{n_1,n_2,n_3} \delta(\lambda_{n_1}-\lambda_{n_2}+\lambda_{n_3}-\lambda_{n_0}=\rho \bigg)\sum_{n_1,n_2,n_3}|a_2(n_2)|^2|\gamma_{n_1,n_2,n_3}^{n_0}|^2\le \\
    &(N_1)^{2s-2+\varepsilon}(N_2)^{-1}(N_3)^{1+\rho}\sum_{\substack{n_0,n_1\\n_2,n_3}}|a_2(n_2)|^2|\gamma_{n_1,n_2,n_3}^{n_0}|^2\leeq (N_1)^{2s-1+\varepsilon}(N_2)^{1-2s}(N_3)^{\rho+\varepsilon}.
\end{align*}
Therefore we have conclusive result if $N_2 \le (N_1)^{8/10}$, on the other hand repeating the same process as in case D, but with an extra $(N_3)^s$ factor we get a conclusive result if $N_3 <N_1^{1/4+\rho+\varepsilon}$, thus we may assume that $N_2 \geq (N_1)^{8/10}$ and $N_3 \geq (N_1)^{1/5}$. 

Now we may repeat the same process as for case D. We define the random matrix and estimate the norm granted by the diagonal by $(N_3)^{-2+\rho}\leq (N_1)^{-1/5}$, as for the a second term of the matrix norm, its contribution may be bounded by
\begin{equation*}
\sum_{n_0\neq n_0'}\Big|\sum_{\substack{n_1,n_1',n_2\\ n_3,n_3'}}{g_{n_1}^\omega }g_{n_3}^\omega \overline{g_{n_1'}^\omega g_{n_2'}^\omega }\gamma_{n_1,n_2,n_3}^{n_0}\overline{\gamma_{n_1',n_2,n_3'}^{n_0'}}\Big|^2.
\end{equation*}
This may be bounded by distinguishing between the case where they are different two  by two, where the $n_1=n_1';n_3\neq n_3'$ and when $n_1=n_3';n_3\neq n_1'$, as the other combinations are included as the square of the gaussians and thus of mean $0$ and independent. The first case will grant $(N_3)^{-2+\rho}\leq (N_1)^{-1/5}$, the same holds for the second, the third yields  $(N_1)^{s+\varepsilon-1/12}(N_2)^{-s+\rho}\leeq (N_1)^{-1/100}$ .
\subsection{Bilinear expression}
Now we develop (\ref{eq:terminos_decomp_2}):
\begin{equation*}
\sum_{n_2\neq n_1} (\gamma_{n_1}+\beta_{n_1})|\gamma_{n_2}+\beta_{n_2}|^2e_{n_1}(|e_{n_2}|^2-1)
\end{equation*}
and decompose in trilinear terms:
\begin{equation*}
\sum_{n_2\neq n_1} u_{n_1}\overline{u_{n_2}}v_{n_2}e_{n_1}(|e_{n_2}|^2-1).
\end{equation*}
As before we divide into cases.
\begin{center}
\begin{tabular}{cc}
\begin{tabular}{ |p{1cm}||p{1cm}|p{1cm}|p{1cm}|  }
\hline
\multicolumn{4}{|c|}{Case $N_1>N_2$} \\
\hline
& $u_1$ & $u_2$ & $v_2$ \\
\hline
I&$\gamma$&$\overline{\gamma}$&$\gamma$\\
J&$\beta$&$\overline{\gamma}$&$\gamma$\\
K&$\gamma$&$\overline{\beta}$&$\beta$\\
L&$\beta$&$\overline{\beta}$&$\beta$\\
M&$\gamma$&$Re(\gamma)$&$Re(\gamma)$\\
N&$\beta$&$Re(\gamma)$&$Re(\beta)$\\
\hline
\end{tabular}
\quad
\begin{tabular}{ |p{1cm}||p{1cm}|p{1cm}|p{1cm}|  }
\hline
\multicolumn{4}{|c|}{Case $N_2\geq N_1$} \\
\hline
& $u_1$ & $u_2$ & $v_3$ \\
\hline
I&$\gamma$&$\overline{\gamma}$&$\gamma$\\
J&$\beta$&$\overline{\gamma}$&$\gamma$\\
K&$\gamma$&$\overline{\beta}$&$\beta$\\
L&$\beta$&$\overline{\beta}$&$\beta$\\
M&$\gamma$&$Re(\gamma)$&$Re(\gamma)$\\
N&$\beta$&$Re(\gamma)$&$Re(\beta)$\\
\hline
\end{tabular}
\end{tabular} 
\end{center}
These terms are first treated here and thus aren't an adaptation in the same way as the trilinear ones are. Cases I,I',J',M,M' are treated exactly the same as cases A,B,A' and C' through Strichartz. Afterwards following the same manipulations as in section \ref{sect:term_standarization} we are left to bound:
\begin{equation}\label{eq:norma2'}
\tau^\alpha(N^2)^\varepsilon\Bigg(\sum_{{|n_0|\leq N^1}}\Bigg|\sum_{\substack{n_i\sim N_i \\ n_2\neq n_2}}u_{n_1}\overline{u_{n_2}}v_{n_2}\theta_{n_1,n_2}^{n_0}\Bigg|^2\Bigg)^{1/2}
\end{equation}
for the cases K,N' and 
\begin{equation}\label{eq:norma3'}
\tau^\alpha(N^1)^{s+\varepsilon}\Bigg(\sum_{{|n_0|\leq N^1}}\Bigg|\sum_{\substack{n_i\sim N_i \\ n_2\neq n_2}}u_{n_1}\overline{u_{n_2}}v_{n_2}\theta_{n_1,n_2}^{n_0}\Bigg|^2\Bigg)^{1/2}
\end{equation}
for the cases J,K',L,L' and N. Where for some $\lambda_0$ 
\begin{equation*}
\theta_{n_1,n_2}^{n_0}  = \Pi_{\lambda_0} \big(\la e^{-it(-\Delta+V^r)} e_{n_1}(|e_{n_2}|^2-1)e^{i(\rho+\lambda_{n_1}) t}, e_{n_0}\ra\big).
\end{equation*}
Develop in Fourier to get
\begin{equation*}
    \theta_{n_1,n_2}^{n_0}=\sum_{\substack{k_1,k_2,k_2'\\ k_2\neq k_2'}} \chi_{n_1}^{k_1}\overline{\chi_{n_2}^{k_2}}\chi_{n_2}^{k_2'}\overline{\chi_{n_0}^{k_1-k_2+k_2'}}\delta(\lambda_{n_1}-\lambda_{n_0}=\rho)
\end{equation*}
it follows that 
\begin{equation}\label{eq:theta}
    | \theta_{n_1,n_2}^{n_1+k}|^2\le \delta(\lambda_{n_1}-\lambda_{n_1+k}=\rho)({N^2})^\rho \la k\ra ^{-4}.
\end{equation}
Furthermore if we look at the fourier development of $\la e_{n_1}(|e_{n_2}|^2-1),e_{n_1+k}\ra$ we have that if $|k|>4(N_1)^{\rho}$ then $e_{n_1}\overline{e_{n_1+k}}$ and $|e_{n_2}|^2-1$ are orthogonal meaning that we can suppose that $|k|\leeq 4(N_1)^{\rho}$.
\subsubsection{Cases L and L'} A priori it would seem that in the case where $N_2$ is small and $N_1$ is big, the lack of resonances would disable any regularity gain, fourtunatley even if we don't have that $|e_n|^2=1$ we still have useful estimates. First look at
\begin{equation*}
(N_1)^{s+\varepsilon-1}(N_2)^{-2}\Bigg(\sum_{{|n_0|\leq N_1}}\Bigg|\sum_{\substack{n_i\sim N_i \\ n_2\neq n_1 }}g_{n_1}^\omega |g_{n_2}^\omega |^2\theta_{n_1,n_2}^{n_0}\Bigg|^2\Bigg)^{1/2}
\end{equation*}
and rewrite to make use of hypercontractivity estimates:
\begin{equation*}
(N_1)^{s+\varepsilon-1}(N_2)^{-2}\Bigg(\sum_{{|n_0|\leq N_1}}\Bigg|\sum_{\substack{n_i\sim N_i \\ n_2\neq n_1 }}\Big(g_{n_1}^\omega (|g_{n_2}^\omega |^2-1)+g_{n_1}^\omega \Big)\theta_{n_1,n_2}^{n_0}\Bigg|^2\Bigg)^{1/2}.
\end{equation*}
Taking the expected value and restricting in $\omega$:
\begin{equation*}
(N_1)^{s+\varepsilon-1}(N_2)^{-2}\Bigg(\sum_{{|n_0|\leq N_1}}\sum_{\substack{n_i\sim N_i \\ n_2\neq n_1}}|\theta_{n_1,n_2}^{n_0}|^2\Bigg)^{1/2}.
\end{equation*}
This is conclusive for the case L' as well as for case L and $(N_1)^{8/10}<N_2$, using (\ref{eq:theta}) we suppose that we are in the case $N_1 \gg N_2$. Let us examine thus
\begin{equation}\label{eq:localtag4}
\sum_{{|k|\leq 4(N_1)^\rho}}\sum_{\substack{n_i\sim N_i \\ n_2\neq n_1}}|\theta_{n_1,n_2}^{n_1+k}|^2=\sum_{{|k|\leq 4(N_1)^\rho}}\sum_{\substack{n_i\sim N_i \\ n_2\neq n_1}}\big|\la e_{n_1}(|e_{n_2}|^2-1),e_{n_1+k}\ra\big|\delta(\lambda_{n_1}-\lambda_{n_1+k}=\rho)
\end{equation}
we divide this in two cases, first $k=0$ and thus $\rho=0$, and $k\neq 0$. First if $k\neq 0$ we get the control 
\begin{equation*}
    (\ref{eq:localtag4})\leeq\sum_{{1<|k|\le 4(N_1)^\rho}}\sum_{\substack{n_i\sim N_i \\ n_2\neq n_1}}\big|\la e_{n_1}(|e_{n_2}|^2-1),e_{n_1+k}\ra\big|^2\delta\big(\la n_1, k\ra =-\frac{-\rho+|k|^2}{2}\big)\leeq (N_1)^{1+2\rho}(N_2)^2
\end{equation*}
which comes from the control on the number of solutions in $x$ of an equation of the form $\la x,y\ra=C$ for $y\neq 0$ and $C\in \R$, and gives the final contribution $(N_1)^{s+\varepsilon-1/2+\rho}(N_2)^{-1}$. On the other hand if $k=0$ we have to control
\begin{equation}\label{eq:localtag5}
    \sum_{\substack{n_i\sim N_i \\ n_2\neq n_1}}\big|\la (|e_{n_2}|^2-1),|e_{n_1}|^2\ra\big|^2
\end{equation}
and for that we are going to realize first that $|e_{n_1}|^2=\sum_{k\in\Pp_{n_1}} \varphi_{n_1}^k e^{ik\cdot x}$ with $\Pp_{n_1}= \{k=k_1-k_2; \ \  k_1,k_2\in \Omega_{\alpha(n_1)}\}$ and $\sum_k |\varphi_{n_2}^k|^2\le C$. On the other hand $|e_{n_2}|^2-1=\sum_{k\in\Pp^*_{n_2}} \varphi_{n_2}^k e^{ik\cdot x}$ with $\Pp_{n_2}^*= \{k=k_1-k_2, \ k\neq 0; \ \  k_1,k_2\in \Omega_{\alpha(n_2)}\}$. Remark that we have that for a $k\in (\Z^2)^*$,  in order to be in $\Pp_{n_1}$, we require that $|\la k,n_1 \ra|\leeq  (N_1)^\rho$. This means that 
\begin{equation*}
    (\ref{eq:localtag5})\le (N_2)^{\rho}\sum_{\substack{n_2 \\ k\in\Pp_{n_2}^{*}}}|\varphi_{n_2}^k|^2\sum_{n_1}|\la |e_{n_1}|^2,e^{ik\cdot x}\ra|^2\le (N_2)^{\rho}\sum_{\substack{n_2 \\ k\in\Pp_{n_2}^{*}}}|\varphi_{n_2}^k|^2 \#\big\{ | \la k,n_1 \ra|\leeq  (N_1)^\rho \big\},
\end{equation*}
and knowing that $k\neq0$ we get that $\#\big\{ | \la k,n_1 \ra|\leeq  (N_1)^\rho \big\}\le (N_1)^{1+\rho}$ and thus the final contribution $(N_1)^{s+\varepsilon-1/2+\rho}(N_2)^{-1}$.
\subsubsection{Case K'} We use the (\ref{eq:norma3'}) this time: 
\begin{equation*}
(N_2)^{s-2+\varepsilon}\Bigg(\sum_{{|n_0|\leq N_1}}\Bigg|\sum_{\substack{n_i\sim N_i \\ n_2\neq n_1 }}a(n_1)|g_{n_2}^\omega |^2\theta_{n_1,n_2}^{n_0}\Bigg|^2\Bigg)^{1/2}.
\end{equation*}
Restriction in $\omega$ grants us 
\begin{equation*}
(N_2)^{s-2+\varepsilon}\Bigg(\sum_{\substack{|n_0|\leq N^1\\n_2\sim N_2}}\Bigg|\sum_{\substack{n_1\sim N_1 \\ n_2\neq n_1 }}a_1(n_1)\theta_{n_1,n_2}^{n_0}\Bigg|^2\Bigg)^{1/2}.
\end{equation*}
Let us revisit a method used in case B' and taking some $1/2<\alpha<1$ use Cauchy-Schwartz:
\begin{equation*}
(N_2)^{s-2+\varepsilon}\Bigg(\sum_{\substack{|n_0|\leq N^1\\n_2\sim N_2}}\Big(\sum_{\substack{n_1\sim N_1 \\ n_2\neq n_1 }}|a_1(n_1)|^2|\theta_{n_1,n_2}^{n_0}|^{2-\alpha}\Big)\Big(\sum_{\substack{n_1\sim N_1 \\ n_2\neq n_1 }}|\theta_{n_1,n_2}^{n_0}|^\alpha\Big)\Bigg)^{1/2},
\end{equation*}
the second factor is bounded by a constant as we are adding for a fixed $n_0,n_2$ over the $n_1$ in a circle. On the other hand the first summand is bounded by 
\begin{equation}\label{eq:casek_prima_final}
    (N_2)^{s-2+\varepsilon}\Bigg(\sum_{\substack{n_1\sim N_1 \\ n_2\neq n_1 }}|a_1(n_1)|^2\sum_{\substack{|n_0|\leq N^1\\n_2\sim N_2}}|\theta_{n_1,n_2}^{n_0}|^{2-\alpha}\Bigg)^{1/2}\leeq (N_2)^{s-2+\varepsilon}\Bigg(\sum_{\substack{n_1\sim N_1 \\ n_2\neq n_1 }}|a_1(n_1)|^2(N_2)^2\Bigg)^{1/2}
\end{equation}
and then finally the control $(N_2)^{s-1+\varepsilon}$.
\subsubsection{Case K}
Doing the same process as for case K', this time beginning with (\ref{eq:norma2'}), we get that (\ref{eq:casek_prima_final}) becomes 
\begin{equation*}
    (N_2)^{-2+\varepsilon}\Bigg(\sum_{\substack{n_1\sim N_1 \\ n_2\neq n_1 }}|a_1(n_1)|^2\sum_{\substack{|n_0|\leq N^1\\n_2\sim N_2}}|\theta_{n_1,n_2}^{n_0}|^{2-\alpha}\Bigg)^{1/2}\leeq (N_2)^{-1+\varepsilon}\Bigg(\sum_{\substack{n_1\sim N_1 \\ n_2\neq n_1 }}|a_1(n_1)|^2\Bigg)^{1/2}\le (N_2)^{-1+\varepsilon}.
\end{equation*}
\subsubsection{Case J} We look at (\ref{eq:norma3'}) and we take the expectation of the square to get:
\begin{equation*}
    \sum_{n_0,n_0} \Big|\sum_{n_2}a_2(n_2)\theta_{n_1,n_2}^{n_0}\Big|^2\sum_{n_1,n_2,k} |\theta_{n_1,n_2}^{n_1+k}|^2|a_2(n_2)|^2 \bigg(\sum_{n_2}|a_2(n_2)|^2\bigg)\le (N_2)^{-2s}(N_1)^{2s-1+\varepsilon},
\end{equation*}
the last inequality coming from cases L and L'.
\subsubsection{Case N } We remark first that $Re(g^\omega )$ is gaussian and that if $n_1\neq n_2$ that $Re(g_{n_1}^\omega ),Re(g_{n_2}^\omega )$ are still independent. For the case N we are to bound:
\begin{equation*}
(N_1)^{s+\varepsilon-1}(N_2)^{-1}\Bigg(\sum_{{|n_0|\leq N^1}}\Bigg|\sum_{\substack{n_i\sim N_i \\ n_2\neq n_1 }}g_{n_1}^\omega Re(g_{n_2}^\omega a_2(n_2))\theta_{n_1,n_2}^{n_0}\Bigg|^2\Bigg)^{1/2}.
\end{equation*}
Take the expectation and using that $n_1\neq n_2$ and bound by:
\begin{equation*}
(N_1)^{s+\varepsilon-1}(N_2)^{-1}\Bigg(\sum_{{|n_0|\leq N^1}}\sum_{\substack{n_i\sim N_i \\ n_2\neq n_1 }} |a_2(n_2)|^2\big|\theta_{n_1,n_2}^{n_0}\big|^2\Bigg|^2\Bigg)^{1/2}\le (N_1)^{s+\varepsilon-1/2+\rho}(N_2)^{-1-s}
\end{equation*}
the last inequality being achieved through the control on the sum of the $\theta$ as seen in cases L and L'.
\subsubsection{Case N'}
We have that the same estimates suffice as for N, just that instead of applying (\ref{eq:norma3'})  we apply (\ref{eq:norma2'}).
\subsection{Approximation}
We have shown that there is a a set $\Omega$, such that the $\Omega^c$ has measure smaller than $e^{-C/\tau^\alpha}$, and such that, for $\omega \in \Omega$, there is a $u\in (e^{it(-\Delta+V)}\Gg_{-\Delta+V^r}(\omega)+B_{X_s,b}) $ satisfies:
\begin{equation*}
\left\|\int_{0}^t S(t-s)\Bigg(u|u|^2-2u\int|u|^2\Bigg)ds\right\|_{X_{s,b}}\leeq \tau^\alpha.
\end{equation*}
Meaning that for $0<\tau$ small enough $T$ is a contraction on  $u\in (e^{it(-\Delta+V^r)}\Gg_{-\Delta+V}(\omega)+B_{X_s,b})$. This grants us wellpossedness of (\ref{eq:Non_Linear_altered_equation}). We are left with showing that the solution may be approximated by the solutions to the truncated equation. We have that for $\omega\in\Omega$ and $u\in (e^{it(-\Delta+V)}\Gg_{-\Delta+V^r}(\omega)+B_{X_s,b})$:
\[\left\|Tu-Tv\right\|_{X_{s,b}}\leeq \tau^\alpha\left\|u-v\right\|_{X_{s,b}}.\]
Through the Banach fixed point theorem we have a unique solution for all $N$ of
\begin{equation*}
\begin{cases}
\Big(i\partial_t +\Delta  +V\Big)u^N =P_N(-u^N|u^N|^2+2a_N^ru^N)\\ 
u(0,x)=P_N(\Gg_{-\Delta+V^r}(\omega))(x),
\end{cases} 
\end{equation*}
in $e^{it(-\Delta+V)}P_N(\Gg_{-\Delta+V^r}(\omega))+B_{X_s,b}$. Let us show that there is a certain stability with respect to the initial conditions that will later allow us to extend the solution. Let $d>0$ and $\omega$ such that $T$ is a contraction on $e^{it(-\Delta+V)}\Gg_{-\Delta+V^r}(\omega)+B_{X_s,b}$ and take \footnote{Such a $\psi$ exists by density of $H^s$ in $H^{-\varepsilon}$ for all $\varepsilon>0$.} a $\psi\in H^s(\T^2)$ with $\|\Gg_{-\Delta+V^r}(\omega)-\psi\|_{H^s}\leq d$ we can consider the transform $T'v:=Tv+e^{it(-\Delta+V)}(\psi-\Gg_{-\Delta+V^r}(\omega))$. We have that for $v\in e^{it(-\Delta+V)}\Gg_{-\Delta+V^r}(\omega)+B_{X_s,b}$
\begin{align}\label{eq:Tprima}
&\left\|T'v-e^{it(-\Delta+V)}\Gg_{-\Delta+V^r}(\omega)\right\|_{X_{s,b}}\leq \left\|Tv-e^{it(-\Delta+V)}\Gg_{-\Delta+V^r}(\omega)\right\|_{X_{s,b}}+ \nonumber\\ &\left\|e^{it(-\Delta+V)}\psi-e^{it(-\Delta+V)}\Gg_{-\Delta+V^r}(\omega)\right\|_{X_{s,b}}\leeq (1+d)\tau^{\alpha},
\end{align}
where we use the stability of the Schrödinger semigroup. So for $\tau$ small enough $T'$ maps the space $e^{it(-\Delta+V)}\Gg_{-\Delta+V^r}(\omega)+B_{X_s,b}$ to itself. Observing that $T-T'=e^{it(-\Delta+V)}(\psi-\Gg_{-\Delta+V^r}(\omega))$, we have that $T'$ is a contraction on $e^{it(-\Delta+V)}\Gg_{-\Delta+V^r}(\omega)+B_{X_{s,b}}$. So (\ref{eq:Non_Linear_altered_equation}) also admits a solution for initial conditions $v(0)=\psi\in H^s+\Gg_{-\Delta+V^r}(\omega)$. We have that letting $u$ be the fixed point for $T$:
\begin{equation*}
\left\|v-u\right\|_{X_{s,b}}=\left\|T'v-Tu\right\|_{X_{s,b}}\leq \left\|Tv-Tu\right\|_{X_{s,b}}+\left\|e^{it(-\Delta+V)}\psi-e^{it(-\Delta+V)}\Gg_{-\Delta+V^r}(\omega)\right\|_{X_{s,b}}.
\end{equation*}
Finally, for $\tau<1$ and $t\in I_\tau$ we get:
\begin{equation}\label{eq:stab}
\left\|v(t)-u(t)\right\|_{H^s}\leq\left\|v-u\right\|_{X_{s,b}}\leq 2\left\|\Gg_{-\Delta+V^r}(\omega)-\psi\right\|_{H^s}.
\end{equation}
Now we show that we can approximate the solution of (\ref{eq:Non_Linear_altered_equation}) by the solution of the truncated version of the Cauchy problem. \begin{lem}\label{lemma:aproximation}
Let $\omega$ such that $T$ is a contraction on $e^{it(-\Delta+V)}\Gg_{-\Delta+V^r}(\omega)+B_{X_s,b}$. Let $u^N$ be the solution to the finite dimensional ODE:
\begin{equation}
\begin{cases}\label{eq_truncada}
i\partial_t u^N -\Delta u^N+ Vu^N=P_N(-u^N|u^N|^2+2a_Nu^N)\\ 
u(0,x)=P_N(\Gg_{-\Delta+V^r}(\omega))(x).
\end{cases} 
\end{equation}
Let $u$ the solution to the non truncated equation. We have that 
\begin{equation}\label{eq:aprox_troca}
\left\|(u^{N}-e^{it\big(-\Delta+V)}P_N(\Gg_{-\Delta+V^r}(\omega)\big)-(u-e^{it(-\Delta+V)}\Gg_{-\Delta+V^r}(\omega))\right\|_{X_{s,b}}\leq N^{-\alpha}.
\end{equation}
\end{lem}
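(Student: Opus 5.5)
We will compare the two fixed points directly in the nonlinear part. Write $z^N:=e^{it(-\Delta+V)}P_N\Gg_{-\Delta+V^r}(\omega)$, $z:=e^{it(-\Delta+V)}\Gg_{-\Delta+V^r}(\omega)$, $w^N:=u^N-z^N$ and $w:=u-z$. Since $P_N$ commutes with $-\Delta+V^r$ but only approximately with $-\Delta+V$, we first use (\ref{eq:prox_S(t)}) and Proposition \ref{prop:comutadorlaplmasv} to replace $e^{it(-\Delta+V)}$ by $e^{it(-\Delta+V^r)}$ in $z^N$ and $z$, up to an error in $X_{s+\delta,b}$. This error, projected onto frequencies $\gtrsim N$, is $O(N^{-\delta+s})$ in $X_{s,b}$. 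Both $w$ and $w^N$ then satisfy Duhamel equations of the form
\begin{equation*}
w=-i\int_0^t S(t-s):(z+w)|z+w|^2:\,ds,\qquad w^N=-i\int_0^t S(t-s)P_N:(z^N+w^N)|z^N+w^N|^2:_N\,ds,
\end{equation*}
where $:\cdot:_N$ denotes the Wick ordering with $a_N$. After the gauge transform (\ref{eq:NLSE_3}), the renormalisations match the decomposition (\ref{eq:terminos_decomp_1})--(\ref{eq:terminos_decomp_3}).

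We split the difference $w-w^N$ into three pieces:
\begin{equation*}
(1-P_N)\!\int_0^t S(t-s):u|u|^2:\,ds,\qquad P_N\!\int_0^t S(t-s)\big(:u|u|^2:-:\tilde u^N|\tilde u^N|^2:\big)ds,
\end{equation*}
together with a third piece, $P_N\int_0^tS(t-s)\big(:\tilde u^N|\tilde u^N|^2:-:u^N|u^N|^2:_N\big)ds$, where $\tilde u^N:=z^N+w$. The first two pieces are handled as follows.
\begin{itemize}
\item \emph{First piece.} Every trilinear and bilinear case bound of the previous subsections gave a gain $(N^1)^{-\alpha_0}$ in the highest frequency, in $X_{s',b}$ for some $s'>s$ (the admissible range was $s<1/2-\rho$). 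Running the same estimates at $s'$, restricted to output frequency $\ge N$, gives $N^{-(s'-s)}$.
\item \emph{Second piece.} The difference $z-z^N=(1-P_N)z$ appears in at least one slot. In every case estimate the Gaussian factor at dyadic scale $N_i\ge N$ contributes a negative power of $N_i$, so the sum over $N_i\ge N$ yields $N^{-\alpha}$. We enlarge the exceptional set in $\omega$ by a union over dyadic scales; this sum is still bounded by $e^{-C/\tau^\alpha}$ because each scale costs $e^{-CN_i^\varepsilon/\tau^\alpha}$.
\end{itemize}

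The third piece is controlled by the contraction estimate $\|Tu-Tv\|_{X_{s,b}}\lesssim\tau^\alpha\|u-v\|_{X_{s,b}}$, applied to the truncated map. This map is also a contraction with the same constants, since $P_N$ is bounded on $X_{s,b}$ and all the probabilistic bounds used only restrictions of sums. We obtain
\begin{equation*}
\|w-w^N\|_{X_{s,b}}\le C\tau^\alpha\|w-w^N\|_{X_{s,b}}+C N^{-\alpha},
\end{equation*}
and absorbing the first term for $\tau$ small gives (\ref{eq:aprox_troca}).

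The main obstacle is the Wick correction. We need $a_N$ to match the renormalisation of $z^N$, and we need the difference of phases $c_N(\omega)-c_\infty(\omega)$ to be $O(N^{-\alpha})$. For the first, $\E\|P_N\Gg\|_2^2=a_N^r$ by definition. For the second, $c_N-c_\infty=\sum_{|n|>N}(|g_n|^2-1)/\lambda_n^r$ plus cross terms with $w$. The Gaussian sum is a centred chaos of order two with variance $\sum_{|n|>N}\lambda_n^{-2}\lesssim N^{-2}$, so hypercontractivity gives $|c_N-c_\infty|\le N^{-1+\varepsilon}$ off a small set. The bilinear term (\ref{eq:terminos_decomp_2}) needs $|e_n|^2-1$ rather than $0$, and its truncated version must use the same $e_n^r$. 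This holds because $P_N$ is also the spectral projector of $-\Delta+V^r$, so the cases I--N transfer verbatim with the frequency restriction.
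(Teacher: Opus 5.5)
Your decomposition is essentially the paper's, organized more cleanly. The Duhamel difference splits into a part with some slot at frequency $\gtrsim N$, a part with $(1-P_N)z$ in one slot, and a part linear in $w-w^N$ that is absorbed through the $\tau^\alpha$ contraction. Your explicit treatment of the gauge phase is fine and more careful than the paper's. One correction there: there are no cross terms with $w$. Since $\int|u^N|^2$ is conserved, $c_N=\sum_{\lambda_n^{1/2}\le N}(|g_n|^2-1)/\lambda_n$ exactly.

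\textbf{The first piece is not correctly justified.} It is not true that every case bound gains a power of the highest frequency. Whenever $w$ carries the top frequency (cases A--D and A$'$--D$'$, and the bilinear cases where $w$ sits at the top), the bounds decay only in the lower frequencies. They return $\|P_{N^1}w\|_{X_{s,b}}$ with no gain in $N^1$.

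So restricting the output to frequencies $\gtrsim N$ yields $N^{-(s'-s)}$ only if you already know $\|w\|_{X_{s',b}}\lesssim 1$ for some $s'>s$. Nothing in your proposal provides this.

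You can obtain it as follows:
\begin{itemize}
\item Run the contraction directly at a level $s'$ slightly above $s$. The admissible range is constrained by $\delta$, through the swap $e^{it(-\Delta+V)}\to e^{it(-\Delta+V^r)}$, and not only by $1/2-\rho$.
\item Use uniqueness in the $X_{s,b}$ ball to identify this fixed point with $w$.
\item Do the same for $w^N$.
\end{itemize}

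The paper trades regularity the other way. It fixes $s'<s$ and measures the difference in $X_{s',b}$. It then uses $\|P_{\gtrsim N}w\|_{X_{s',b}}\lesssim N^{s'-s}\|w\|_{X_{s,b}}$ and runs the absorption step in $X_{s',b}$. This needs no extra regularity of the fixed point, but it concludes at the lower index $s'$.

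\textbf{Placement of $P_N$.} Since $P_N$ does not commute with $e^{it(-\Delta+V)}$, keep $P_N$ and $1-P_N$ inside the Duhamel integral, as the paper does. With $P_N$ outside, as you wrote them, your three pieces do not add up to $w-w^N$.
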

\begin{proof}
Begin by fixing $0<s'<s$ and looking at the difference 
\[\left\|\big(u^{N}-e^{it(-\Delta+V)}P_N(\Gg_{-\Delta+V^r}(\omega)\big)-(u-e^{it(-\Delta+V)}\Gg_{-\Delta+V^r}(\omega))\right\|_{X_{s',b}}.\]
Using the fact that $u^{N}$ and $u$ are fixed points for $T^N$ (the flow associated to (\ref{eq_truncada})) respectively $T$ we can write $\big(u^{N}-e^{it(-\Delta+V)}P_N(\Gg_{-\Delta+V^r}(\omega)\big)-(u-e^{it(-\Delta+V)}\Gg_{-\Delta+V^r}(\omega))$ as:
\[-i \int_{0}^t e^{i(t-s)(-\Delta+V)}\Bigg(\Bigg(u|u|^2-2u\int|u|^2\Bigg)-P_N\Bigg(u^N|u^N|^2-2u^N\int|u^N|^2\Bigg)\Bigg)ds.\]
We look at 
\[\Bigg(u|u|^2-2u\int|u|^2\Bigg)-P_N\Bigg(u^N|u^N|^2-2u^N\int|u^N|^2\Bigg)\]
and once again the cubic terms can be written in terms of products of three  terms as: \[w_1(n_1)\overline{w_2}(n_2)w_3(n_3)e_{n_1}^r\overline{e_{n_2}^r}e_{n_3}^r.\] We can regroup these trilinear expansions in dyadic forms as $w_1\overline{w_2}w_3$ with one of these $w_i$ satisfying either $P_{N/3}w_i=0$ or $w_i= P_{N\le|n|\le 2N}(u-u^N)$. We then look at the work done in the Trilinear expression section. When it comes to the case $P_{N/3}w_i=0$ we have two options. First $w_i=\beta$, in this case for a $N$ big enough we get an extra $N^{-\alpha'}$ factor for some $\alpha'>0$, as we begin adding from a frequency of order $N$ instead of $1$. On the other hand if $w_i=\gamma$ we have a gain in the form of $N^{s'-s}$. For the terms in the form of $w_i=u-u^N$ we can rewrite them as the sum of 
\begin{align*}
    &e^{it(-\Delta+V)}(\Gg_{-\Delta+V^r}(\omega)-P_N(\Gg_{-\Delta+V^r}(\omega)))+\\
    &(u^{N}-e^{it(-\Delta+V)}P_N(\Gg_{-\Delta+V^r}(\omega)))-(u-e^{it(-\Delta+V)}\Gg_{-\Delta+V^r}(\omega)).
\end{align*}
The first term grants us a $N^{-\alpha'}$ gain, as for the second term being in $B_{X_s,b}$ implies the control $C\tau^\alpha \left\|(u^{N}-e^{it(-\Delta+V)}P_N(\Gg_{-\Delta+V^r}(\omega)))-(u-e^{it(-\Delta+V)}\phi)\right\|_{X_{s',b}}$. Therefore we have:
\begin{align*}
&\left\|(u^{N}-e^{it(-\Delta+V)}P_N(\Gg_{-\Delta+V^r}(\omega)))-(u-e^{it(-\Delta+V)}\Gg_{-\Delta+V^r}(\omega))\right\|_{X_{s',b}}\leq \\
&N^{s'-s}+N^{-\delta} + \tau^\alpha \left\|(u^{N}-e^{it(-\Delta+V)}P_N(\Gg_{-\Delta+V^r}(\omega)))-(u-e^{it(-\Delta+V)}\Gg_{-\Delta+V^r}(\omega))\right\|_{X_{s',b}}.
\end{align*}
And thus for $\tau$ small enough we have:
\begin{equation}
\left\|(u^{N}-e^{it(-\Delta+V)}P_N(\Gg_{-\Delta+V^r}(\omega)))-(u-e^{it(-\Delta+V)}\Gg_{-\Delta+V^r}(\omega))\right\|_{X_{s,b}}\leeq N^{-\alpha}.
\end{equation}
\end{proof}
We can finish the proof of proposition (\ref{prop:LWP}) thanks once again to the control of the $L_t^\infty H^s(\T^2)$ norm by the $X_{s,b}$ norm.
\end{proof}
\section{Extension and Gibbs measure invariance}\label{sect:measure_5}
Once we have proved that we can approximate the solution by the truncated solution we can pass to the extension to all time. We now present the proof of the main theorem.
\begin{proof}[Proof of Theorem \ref{thm:thm1}]
The extension in time comes from the invariance of the Gibbs measure by the flow paired with the previous local theory granted by corollary \ref{prop:LWP_laplaciana}. We now introduce the Gibbs measure and prove that it is well defined (in the sense that it is a weighted Wiener measure). But before let us show that the wick ordered non-linear part of the hamiltonian is almost surly finite.
\begin{prop}\label{prop:integrabilidadmu}
For all $\alpha >0$ , $V\in H^{2+\varepsilon}(\T^2)$, and $\lambda>C(V)$ big enough we have for some $\delta>0$ and all $N\in 2^\N$:
\begin{equation}\label{eq:control_ham_2d}
\mathbb{P_\omega}\left(-\frac{1}{2}\int_{\T^2}|\phi^N|^4+2a_N\int_{\T^2}|\phi^N|^2 - a_N^2+>\lambda\right) \leq e^{-C e^{-\alpha\sqrt{\lambda}}}. 
\end{equation}
where $\phi_N $ is either $ P_N(\Gg_{-\Delta+V^r}(\omega))$ or $P_N(\Gg_{-\Delta+V}(\omega))$.
\end{prop}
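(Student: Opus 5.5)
The idea is to reduce the estimate to a tail bound for the Wick-ordered quartic functional. Algebraically,
\begin{equation*}
-\frac12\int|\phi^N|^4+2a_N\int|\phi^N|^2-a_N^2=-\frac12\int\Big(|\phi^N|^4-4a_N|\phi^N|^2+2a_N^2\Big)+\Big(\text{deterministic error}\Big).
\end{equation*}
Here the first bracket is, up to corrections, the Wick product $:|\phi^N|^4:$ with respect to the variance. The natural Wick variance is the pointwise variance $\sigma_N(x)=\E|\phi^N(x)|^2$, which here is not constant in $x$; it equals $a_N$ only on average. So I split
\begin{equation*}
|\phi^N|^4-4a_N|\phi^N|^2+2a_N^2=\;:|\phi^N|^4:_{\sigma_N}+4(\sigma_N-a_N)|\phi^N|^2-2\sigma_N^2+2a_N^2.
\end{equation*}

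\textbf{Step 1: the variance correction.} I need to control $\sigma_N-a_N$. Write $\sigma_N(x)=\sum_{|n|\le N}\lambda_n^{-1}|e_n(x)|^2$. By the description of the eigenfunctions (Lemma \ref{lem:control_e_f} and the Fourier concentration of $e_n^r$), $|e_n|^2-1$ has Fourier support in $\Pp^*_n$, and its coefficients are square summable uniformly in $n$. As in the treatment of \eqref{eq:localtag5}, a nonzero $k\in\Pp_n^*$ forces $|\la k,n\ra|\lesssim |n|^\rho$. Summing over $n$ and using $\lambda_n^{-1}\sim |n|^{-2}$ then gives $\|\sigma_N-a_N\|_{L^2}\le C(V)$ uniformly in $N$, with a convergent limit. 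For the unperturbed case $\Gg_{-\Delta+V}$ I would instead use $(-\Delta+V)^{-1}-(-\Delta)^{-1}\in\Ll^{-4}$, which makes the corresponding kernel difference bounded. Consequently $\int(\sigma_N^2-a_N^2)$ is bounded, and
\begin{equation*}
\int(\sigma_N-a_N)|\phi^N|^2\le \|\sigma_N-a_N\|_{L^2}\,\big\||\phi^N|^2\big\|_{L^2}.
\end{equation*}
The right-hand side is controlled by $C+\frac{1}{100}\int|\phi^N|^4$. This piece can therefore be absorbed by the negative quartic term, since the event only asks for the functional to be large and positive.

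\textbf{Step 2: the main term.} The remaining object is $-\frac12\int:|\phi^N|^4:$ plus bounded terms. Nelson's argument applies here. I would decompose $:|\phi^N|^4:=\sum_{M}\big(:|\phi^M|^4:-:|\phi^{M/2}|^4:\big)$. The second moment of each dyadic increment is $\int\!\!\int G_M(x,y)^4-G_{M/2}(x,y)^4$, where $G_M$ is the covariance kernel. Using $|G_M(x,y)|\lesssim \log(2+\min(M,|x-y|^{-1}))$, which follows from the uniform $L^\infty$ bound on $e_n^r$ together with the Fourier localization, each increment has second moment at most $M^{-c}$. Wiener chaos hypercontractivity of order four then gives $\mathbb P(|X_M|>t)\le e^{-c(tM^{c})^{1/2}}$. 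The positive part is handled by the standard Nelson trick. Since
\begin{equation*}
-\frac12\int:|\phi^N|^4:\le -\frac12\int:|\phi^{M}|^4:+\sum_{M'>M}|X_{M'}|,
\end{equation*}
and $-\frac12\int:|\phi^M|^4:\le C(\log M)^2$ deterministically (complete the square pointwise), I choose $M=e^{c\sqrt\lambda}$. The deterministic part then stays below $\lambda/2$. For the tail, $\sum_{M'>M}|X_{M'}|>\lambda/2$ has probability at most $\exp(-cM^{c}\lambda^{1/2})$, which is of the claimed double-exponential form $e^{-Ce^{\alpha\sqrt\lambda}}$ (modulo the sign typo in the statement). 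Absolute continuity from Section \ref{subsection:proba} transfers the bound between the two measures, or I can simply run the same computation for each.

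\textbf{Main obstacle.} I expect the hardest part to be the kernel bound on $G_M$ and the uniform-in-$x$ control of $\sigma_N-a_N$ for the perturbed eigenbasis. Without the potential these are exact Fourier computations; here the proof has to lean on the $\Omega_\alpha$ geometry and on Lemma \ref{lem:control_e_f}.
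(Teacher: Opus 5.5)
\textbf{The gap is the absorption at the end of Step 1.} Write $F_N$ for the functional in the statement. After you Wick-order with respect to $\sigma_N$, the whole quartic sits inside $-\frac12\int :|\phi^N|^4:_{\sigma_N}=-\frac12\int|\phi^N|^4+2\int\sigma_N|\phi^N|^2-\int\sigma_N^2$, and there is no separate negative quartic left. If you borrow $\frac1{100}\int|\phi^N|^4$ from it, the remainder $-\frac{49}{100}\int|\phi^N|^4+2\int\sigma_N|\phi^N|^2-\int\sigma_N^2$ has expectation $\frac1{50}\int\sigma_N^2\sim(\log N)^2$. That is no longer the mean-zero object Step 2 controls. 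Nor can the correction be estimated on its own. Using $|\phi^N|^2=:|\phi^N|^2:+\sigma_N$ and $\int\sigma_N=a_N$, it is exactly the second-chaos part of $F_N$:
\[
E_N=-2\int(\sigma_N-a_N)\,:|\phi^N|^2:\;-\;\|\sigma_N-a_N\|_{L^2}^2 .
\]
As soon as $\sigma_N$ is not constant, this is an indefinite quadratic form in the Gaussians with variance of order one. Its upper tail is therefore of size $e^{-c\lambda}$, and any bound of the form $\mathbb P(F_N>\lambda)\le\mathbb P(-\frac12\int:|\phi^N|^4:>\lambda/2)+\mathbb P(E_N>\lambda/2)$ is at best single exponential. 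The cancellation between $E_N$ and the quartic term has to be kept, not estimated away.

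\textbf{The repair} uses your own ingredients but runs Nelson's argument on $F_N$ itself, as the paper does.
\begin{itemize}
\item Complete the square with the constant $a_M$, not with $\sigma_M$. By \eqref{eq:recriture}, $F_M=-\frac12\int(|\phi^M|^2-2a_M)^2+a_M^2\le a_M^2\lesssim(\log M)^2$, deterministically and with $E_M$ included.
\item Then $F_N-F_M=-\frac12\int(:|\phi^N|^4:-:|\phi^M|^4:)+(E_N-E_M)$, and your Step 2 handles the first piece.
\item For $E_N-E_M$ you need a rate, $\|f_N-f_M\|_{L^2}\lesssim M^{-c}$ for $f_N=\sigma_N-a_N$, not merely uniform boundedness. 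Your Fourier argument, via the constraint on $\la k,n\ra$ for $k\in\Pp^*_n$, does give such a rate.
\item By Schur's test, $\E|\int g\,:|\phi^N|^2:|^2\le\|g\|_{L^2}^2\sup_x\int|G_N(x,y)|^2dy\lesssim\|g\|_{L^2}^2$, and also $\sup_x\int|G_N-G_M|^2dy\lesssim M^{-2}$. With the rate above, this gives $\E|E_N-E_M|^2\lesssim M^{-c}$.
\end{itemize}

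\textbf{Comparison with the paper.} The paper bounds $F_{N_0}\le a_{N_0}^2$ and estimates $F_N-F_{N_0}$ in $L^2(\omega)$ through an eigenbasis expansion. There the pairing terms (\ref{term:term2_2_1})--(\ref{term:term2_2_2}) play the role of your $E_N$. Once repaired, your kernel bookkeeping has the merit of automatically capturing the off-diagonal second-chaos terms $|g_{n_1}|^2g_{n_3}\overline{g_{n_4}}$ with $n_3\neq n_4$, which vanish only when $|e_n|\equiv1$.

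\textbf{On your stated main obstacle.} The refined kernel bound $|G_M(x,y)|\lesssim\log(2+|x-y|^{-1})$ is not needed. The crude bound $|G_M|\lesssim\log M$, together with $\|G_M-G_{M/2}\|_{L^2(\T^2\times\T^2)}\lesssim M^{-1}$, already gives $\iint(|G_M|^4-|G_{M/2}|^4)\lesssim M^{-1}(\log M)^3$.
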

\begin{rem}
    We have seen in subsection \ref{subsection:proba} that the Gibbs measures defined by $-\Delta+V$ and $-\Delta+V^r$ are almost $L^\infty$ with respect to each other, meaning that we it suffices to show that this holds for $\phi_N = P_N(\Gg_{-\Delta+V^r}(\omega))$.
\end{rem}
\begin{proof}
First we rewrite and control naively
\begin{equation}\label{eq:recriture}
-\frac{1}{2}\int_{\T^2}|\phi^N|^4+2a_N\int_{\T^2}|\phi^N|^2 - a_N^2=-\frac{1}{2}\int_{\T^2}\left(|\phi^N|^2-2a_N\right)^2 + a_N^2\leq a_N^2 \leeq  \log(N)^2.
\end{equation}
\color{black}
We want to control for $N>N_0$:
\begin{equation*}
\mathbb{P}\Bigg(\Big|\big(-\frac{1}{2}\int_{\T^2}|\phi^N|^4+2a_N\int_{\T^2}|\phi^N|^2 - a_N^2\big) +\frac{1}{2}\int_{\T^2}|\phi^{N_0}|^4-2a_{N_0}\int_{\T^2}|\phi^{N_0}|^2 + a_{N_0}^2\Big|>\lambda \Bigg).
\end{equation*}
Which with the previous estimate and hypercontractivity estimates directly implies (\ref{eq:control_ham_2d}). Let us decompose in the hilbertian basis of eigenfunctions:
\begin{equation}\label{eq:decomp}
-\frac{1}{2}\int_{\T^2}|\phi^N|^4+2a_N\int_{\T^2}|\phi^N|^2-a_N^2.
\end{equation}
The idea is to divide our Hamiltonian into three parts and in a similar way as in \cite{burq2014remarksgibbsmeasuresnonlinear} estimate them separately. Let $\gamma(n_1,n_2,n_3,n_4):=\int_{\T^2}e_{n_1}^r\overline{e_{n_2}^r}e_{n_3}^r\overline{e_{n_4}^r}$, in the following write again $e_n\leftrightarrow e_n^r$. We have that (\ref{eq:decomp}) is equal to:
 \begin{align}
    -&\frac{1}{2}\sum_{(n_1,n_2,n_3,n_4) \in \Aa} \frac{g_{n_1}(\omega)\overline{g_{n_2}(\omega)}g_{n_3}(\omega)\overline{g_{n_4}(\omega)}}{\lambda_{n_1}^{1/2} \lambda_{n_2}^{1/2} \lambda_{n_3}^{1/2} \lambda_{n_4} ^{1/2}}\gamma(n_1,n_2,n_3,n_4) \label{term: decomp1}\\
    -&\sum_{\lambda_{n_{1,2}} ^{1/2}<N}\frac{|g_{n_1}(\omega)|^2|g_{n_2}(\omega)|^2}{\lambda_{n_1} \lambda_{n_2} }\gamma(n_1,n_1,n_2,n_2)+2a_N\sum_{\substack{\lambda_n ^{1/2}<N}}\frac{|g_n(\omega)|^2}{\lambda_n }-a_N^2\label{term: decomp2}\\
    +&\frac{1}{2}\sum_{\substack{\lambda_n ^{1/2}<N}}\frac{|g_n(\omega)|^4}{\lambda_n ^2}\label{term: decomp3}
 \end{align}     
With $\Aa :=\{(n_1,n_2,n_3,n_4)\in \Z^2 : \max{\lambda_{n_i}^{1/2}}\leq 2N, n_1\neq n_2,n_4;n_3\neq n_2,n_4\}$. We are once again to control the expectation of these terms. For the contribution of the first term, (\ref{term: decomp1}) , we have to bound:
\begin{equation}\label{eq:terminoquadr}
\sum_{\max \lambda_{n_i}^{1/2}>N_0} \frac{|\gamma(n_1,n_2,n_3,n_4)|^2}{\lambda_{n_1}\lambda_{n_2}\lambda_{n_3}\lambda_{n_4} }.
\end{equation}
As once again by independence of the gaussians:
\begin{equation*}
\E (g_{n_1}(\omega)\overline{g_{n_2}(\omega)}g_{n_3}(\omega)\overline{g_{n_4}(\omega)}g_{m_2}(\omega)\overline{g_{m_1}(\omega)}g_{m_4}(\omega)\overline{g_{m_3}(\omega)})=\mathbbm{1}_{\substack{n_i=m_i \\ i=1,2,3,4}}.
\end{equation*}
We have by a symmetry and a dyadic argument that we can control (\ref{eq:terminoquadr}) up to a combinatorial constant by ordering our frequencies:
\begin{equation*}
\sum_{\substack{N^i,N^1\geq N_0 \\ 1\leq i \leq 4}}(N^1N^2N^3N^4)^{-2}\sum_{\substack{|n_i| \sim N_i \\ 1\leq i \leq 4}}|\gamma(n_1,n_2,n_3,n_4)|^2.
\end{equation*}
Now we can take the sum on the $n_1$ as it gives us the sum of the squares of the projection on the $e_{n_1}$ spaces we get the control:
\begin{equation*}
\sum_{\substack{N^i,N^1\geq N_0 \\ 1\leq i \leq 4}}(N^1N^2N^3N^4)^{-2}\sum_{\substack{|n_i| \sim N_i \\ 2\leq i \leq 4}}\int_{\T^2}|e_{n_2}|^2|e_{n_3}|^2|e_{n_4}|^2.
\end{equation*}
We arrive at
\begin{align*}
&\sum_{\substack{N^i,N^1\geq N_0 \\ 1\leq i \leq 4}}(N^1N^2N^3N^4)^{-2}\sum_{n_4 \sim N^4}\int_{\T^2}|e_{n_4}|^2\sum_{n_2 \sim N^2}|e_{n_2}|^2\sum_{n_3\sim N^3}|e_{n_3}|^2\\ \leeq &C(\varepsilon,V)\sum_{\substack{N^i,N^1\geq N_0 \\ 1\leq i \leq 4}}(N^1)^{-2+\varepsilon}(N^2N^3)^{\varepsilon}.
\end{align*}
Giving us the control $C(\varepsilon,V)N_0^{-2+\varepsilon}$. The contribution of (\ref{term: decomp3}) is direct by the independence of gaussians, $N_0^{-2+\varepsilon}$. We now treat the most intrincate term (\ref{term: decomp2}) and its contribution to the difference, rewrite the term as:
\begin{align}
    -&\sum_{\substack{\lambda_{n_{1/2}}^{1/2}<N \\ \max \lambda_{n_i}^{1/2}>N_0}}\frac{(|g_{n_1}(\omega)|^2-1)(|g_{n_2}(\omega)|^2-1)}{\lambda_{n_1} \lambda_{n_2}}\int |e_{n_1}|^2|e_{n_2}|^2 \label{term:term2_1}\\
    +&2\sum_{\substack{N_0\leq \lambda_{n_{1/2}}^{1/2}<N }}\frac{|g_{n_1}(\omega)|^2}{\lambda_{n_1} \lambda_{n_2}}\int |e_{n_1}|^2(|e_{n_2}|^2-1)\label{term:term2_2}\\
    -&\sum_{\substack{N_0\leq \lambda_{n_{1/2}}^{1/2}<N}}\frac{1}{\lambda_{n_1} \lambda_{n_2}}\int |e_{n_1}|^2(|e_{n_2}|^2-1).\label{term:term2_3}
\end{align}
We control once again taking the $L^2(\omega)$ norm on the terms. For the term (\ref{term:term2_1}) we have that $n_1\neq n_2$ implies that the squared expectation renders the non diagonal terms null, meaning that we get the control
\begin{equation}\label{eq:terminillos}
\|(\ref{term:term2_1})\|_{L^2(\omega)}^2\leq \sum_{\substack{N_0\le\lambda_{n_{1/2}}^{1/2}<N }}\frac{\int |e_{n_1}|^2|e_{n_2}|^2}{\lambda_{n_1} ^2\lambda_{n_2}^2}+\sum_{\substack{\lambda_{n_{1/2}}^{1/2}<N \\ \max \lambda_{n_{1/2}}^{1/2} \ge N_0}}\frac{\int |e_{n_1}|^4\int |e_{n_2}|^4}{\lambda_{n_1} ^2\lambda_{n_2}^2},
\end{equation}
which is conclusive as we have that $|e_n|\leq C(V)$, and thus we get the control of $C(\varepsilon,\|V\|_{H^{2+\varepsilon}})N_0^{-\alpha}$. Now we come to the term (\ref{term:term2_2}) and rewrite it as:
\begin{align}
2&\sum_{\substack{N_0\leq \lambda_{n_{1,2}} ^{1/2}<N}}\frac{|g_{n_1}(\omega)|^2-1}{\lambda_{n_1} \lambda_{n_2}}\int |e_{n_1}|^2(|e_{n_2}|^2-1) \label{term:term2_2_1} \\
+2&\sum_{\substack{N_0\leq \lambda_{n_{1,2}} ^{1/2}<N}}\frac{1}{\lambda_{n_1} \lambda_{n_2}}\int |e_{n_1}|^2(|e_{n_2}|^2-1)\label{term:term2_2_2}.
\end{align}
The second term is going to contribute directly to  (\ref{term:term2_3}) and we can easily bound this thanks to the estimates done in cases L and L' in the previous section.
We can control (\ref{term:term2_2_1}) again by taking the $L^2(\omega)$ norm and we get the control of its square by:
\begin{equation}\label{eq:controlcontrolin}
4\sum_{\substack{N_0\leq \lambda_{n_{1}} ^{1/2}<N}}\frac{1}{\lambda_{n_1}^2 }\left|\int |e_{n_1}|^2\sum_{\substack{N_0\leq \lambda_{n_{2}}^{1/2}<N}}\frac{(|e_{n_2}|^2-1)}{\lambda_{n_2}}\right|^2.
\end{equation}
Write again $|e_{n_2}|^2-1=\sum_{k\in\Pp^*_{n_2}} \varphi_{n_2}^k e^{ik\cdot x}$ with $\Pp_{n_2}^*= \{k=k_1-k_2, \ k\neq 0; \ \  k_1,k_2\in \Omega_{\alpha(n_2)}\}$, and control $(\ref{eq:controlcontrolin})/4$
\begin{align*}
    &\sum_{n_1}\frac{1}{\lambda_{n_1}^2 }\Bigg| \sum_{n_2}\sum_{k\in \Pp^*_{n_2}}\frac{\varphi_{n_2}^k}{\lambda_{n_2}} \int |e_{n_1}|^2e^{-ik\cdot x}\Bigg|^2\le \sum_{n_1}\frac{1}{\lambda_{n_1}^2 }\Bigg( \sum_{n_2}\frac{|n_2|^{\rho}}{\lambda_{n_2}}\sup_{k\in \Pp^*_{n_2}} \Big| \varphi_{n_2}^k\int |e_{n_1}|^2e^{-ik\cdot x}\Big|\Bigg)^2\\
    &\le \sum_{\substack{n_1,n_2, \\n_2'}}\frac{1}{\lambda_{n_1}^2 } \frac{|n_2|^{2\rho}}{\lambda_{n_2}^{1-\varepsilon}\lambda_{n_2'}^{1+\varepsilon}}\sup_{k\in \Pp^*_{n_2}} \Big|\int |e_{n_1}|^2e^{-ik\cdot x}\Big|^2\sup_{k\in \Pp^*_{n_2'}} | \varphi_{n_2'}^k|^2\\ 
    &\le \sum_{n_1,n_2}\frac{1}{\lambda_{n_1}^2 } \frac{|n_2|^{2\rho}}{\lambda_{n_2}^{1-\varepsilon}}\sup_{k\in \Pp^*_{n_2}} \Big|\int |e_{n_1}|^2e^{-ik\cdot x}\Big|^2.
\end{align*}
Controlling the sum over the $n_2$ as the $L^2$ norm of $|e_{n_1}|^2$ we get
\begin{equation*}
    \sum_{n_1}\frac{1}{\lambda_{n_1}^2 } \sum_{n_2} \frac{|n_2|^{2\rho}}{\lambda_{n_2}^{1-\varepsilon}}\sup_{k\in \Pp^*_{n_2}} \Big|\int |e_{n_1}|^2e^{-ik\cdot x}\Big|^2\leeq \sum_{n_1}\frac{1}{\lambda_{n_1}^2 }\le (N_0)^{-2^{-}}.
\end{equation*}
Therefore we get the control $(\ref{eq:controlcontrolin})\leq C(V)N_0^{-\delta}$. 
Finally we arrive by using the previous (\ref{eq:recriture}) and taking $N_0$ such that $\lambda > 2\log (N_0)^2$ the control $e^{-C(\varepsilon,\|V\|_{H^{2+\varepsilon}}) e^{-\delta\sqrt{\lambda}}}$.
\end{proof}
This means that we can define the Gibbs measure\footnote{The last expression is formal}:
\begin{equation}\label{eq:measure_winer_no_truncada}
\mu:= \exp\left(-\frac{1}{2}\int_{\T^2}:|\phi|^4:\right)\rho_{-\Delta+V}=\exp(H)\frac{dU}{Z} . 
\end{equation} 
and see it as a well defined weighted Wiener measure. We can write $\mu$ formally as $e^{-H(\phi)}du$ with $du$ being a pseudo weighted Lebesgue measure on the phase space. On the other hand define the truncated Gibbs measure that corresponds to the truncated hamiltonian:
\begin{equation}\label{eq:measure_winer}
\mu_N:= exp\left(-\frac{1}{2}\int_{\T^2}|\phi^N|^4+2a_N\int_{\T^2}|\phi^N|^2 - a_N^2\right)\rho_N=e^{-H^N(\phi^N)}du_N. 
\end{equation}
Note $\Lambda_N=\#\{n\in \Z^2,(\lambda_n^r)^{1/2} \}\sim N^2$, and $du_N$ be the Lebesgue product measure on $\mathbb{C}^{\Lambda_N}$.
We finally extend the solution for all time by applying a by now fairly standard para-theorem that uses local in time existence and invariant measure to give all time existence. Fix $\delta>0$ and $T\geq 0$ let the interval $[0,T]$ be divided in $\lceil T/\tau\rceil$ intervals of fixed length $\tau>0$ to be chosen later. Now we will show that we can extend the lemma \ref{lemma:aproximation} recurrently on the sub-intervals and at the same time extend the result granting us all time existence and convergence almost surely. Note for $M,N\in 2^\N$ $\Phi^{N}_t,\Phi^{N}_t$ the flow at time $t\in[0,\tau]$ corresponding to the equation (\ref{eq_truncada}). Take for some $M\in 2^\N$ a $\psi=\Gg_{-\Delta+V}(\omega),\omega\in\Omega$ in the set of good data we have for $N<M$ deduce from (\ref{eq:aprox_troca}):
\begin{equation}\label{eq:loquequieroextender}
\|(\Phi^M_t-\Phi_t^N(t)P_N-e^{it(-\Delta+V)}(Id-P_N))\psi^M\|_{H^s}\leq N^{-\alpha}
\end{equation}
with $\alpha > 0$ and $t\in[0,\tau]$. Writing $\UU_M(t):=\Phi_t^M\psi$, a priori on $[0,\tau]$, and eventually defined for $t\in [0,T]$ with the flow being extended after making more considerations on the choice of $\psi$. We have:
\begin{equation}\label{eq:primerpaso}
\|\UU_M(\tau)-(\Phi_t^N(\tau)P_N-e^{i\tau(-\Delta+V)}(Id-P_N))\psi^M\|_{H^s}\leq N^{-\alpha}.
\end{equation}
Now as anticipated, we may suppose that $\psi^M=\Gg^M_{-\Delta+V}(\omega), \ \ \omega \in \Omega_\delta \cap (\Phi_\tau)^{-1}\Omega_\delta$, meaning that $\UU_M(\tau)\in \Omega_\delta^{M}$. We now accumulate some inequalities that will allow us to extend and control our solution. We do this by repeating the previous step thanks to local existence and we get that for $t\in [0,\tau]$
\begin{equation*}
\|(\Phi^M_t-\Phi_t^N(t)P_N-e^{it(-\Delta+V)}(Id-P_N))\UU_M(\tau)\|_{H^s}\leeq N^{-\alpha}.
\end{equation*}
and as before rewrite:
\begin{equation*}
\|\UU_M(\tau+t)-\big(\Phi_t^N(t)P_N-e^{it(-\Delta+V)}(Id-P_N)\big)\UU_M(\tau)\|_{H^s}\leeq N^{-\alpha}.
\end{equation*}
The transform $e^{it(-\Delta+V)}$ is unitary, thus from hypothesis we can apply (\ref{eq:primerpaso}) and get
\begin{equation}
\|e^{it(-\Delta+V)}(Id-P_N)\UU_M(\tau)-e^{i(t+\tau)(-\Delta+V)}(Id-P_N)\psi^M\|_{H^s}\leeq N^{-\alpha}.
\end{equation}
Furthermore knowing that $\|P_N\|_{\Ll(L^2)}\le1$  we get:
\begin{equation}\label{eq:segundopaso}
\|P_N\UU_M(\tau)-\Phi^N_{\tau}P_N\psi\|_{H^s}\leeq N^{-\alpha}.
\end{equation}
By hypothesis $\UU_M(\tau)\in \Omega_\delta^{M}$ we can apply the stability estimation (\ref{eq:stab}) to the preceding estimate and get 
\begin{equation}\label{eq:tercerpaso}
\|\Phi^N_tP_N\UU_M(\tau)-\Phi^N_{t+ \tau}P_N\psi\|_{H^s}\leq N^{-\alpha}.
\end{equation}
We combine the previous to extend (\ref{eq:loquequieroextender}) for $t\in[0,2\tau]$:
\begin{equation}\label{eq:el_gran_paso_final}
\|\Big(\Phi^M_{t+\tau}-\Phi^N_{t+\tau}-e^{i(t+\tau)(-\Delta+V)}(Id-P_N)\Big)\psi\|_{H^s}\leq(\ref{eq:primerpaso})+(\ref{eq:segundopaso})+(\ref{eq:tercerpaso})\leq N^{-\alpha}.
\end{equation}
We now generalize for $t\in [0,T]$, let:
\begin{equation}\label{eq:omega}
\Oo^M:= \bigcap_{k\leq\lceil \frac{T}{\tau}\rceil} (\Phi^{M}_{k\tau})^{-1}(\Omega).
\end{equation}
Now by construction and the Hamiltonian's invariance we get that $\mu_M$ is invariant by the flow $\Phi^M$. We get that $\mu_M((\Oo^M)^c)\leq T/\tau e^{-1/\tau^\beta}$. This grant us that for any given $\sigma>0$ we can make a choice of $\tau$ such that there is a set $\Oo^M, \mu_M((\Oo^M)^c)\leq\sigma$ such that for initial data $\omega\in \Oo^M, \psi=\Gg_{-\Delta+V}(\omega) $:
\begin{equation}
\|(\Phi^M_{t}-\Phi^N_{t}-e^{it(-\Delta+V)}(Id-P_N))\psi\|_{H^s}\leq C(\sigma)N^{-\alpha}.
\end{equation}
Now we have the limit of measures:
\begin{lem}\label{lemma:aprox}
Let $E_N=Vect(\{e_n^r\}_{|n|\leq N})$ and let $U\subset H^s(\T^2)$ be an open set, with $s<0$, then we have:
\begin{equation*}
\rho(U)=\lim_{N\to \infty}\rho_N(E_N\cap U)
\end{equation*}
\begin{equation*}
\mu(U)=\lim_{N\to \infty}\mu_N(E_N\cap U)
\end{equation*}
\end{lem}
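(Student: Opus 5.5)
The plan is to realise every $\rho_N$ (and $\mu_N$) on one probability space together with $\rho$ (and $\mu$). Then each set $E_N\cap U$ becomes an event for the random variable $P_N\Gg_{-\Delta+V}(\omega)$. I would first prove the equality for $\rho$ and then transfer it to $\mu$ through the densities, using Proposition \ref{prop:integrabilidadmu}.

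\textbf{Step 1 (coupling).} By definition, $\rho_N$ is the centred Gaussian measure on $E_N$ with covariance $\Pi_{E_N}(-\Delta+V)^{-1}\Pi_{E_N}$. It is therefore the law of $\phi^N:=P_N\phi$, where $\phi:=\Gg_{-\Delta+V}(\omega)$ has law $\rho$. (For $V^r$ this is exact, since the $e^r_n$ diagonalise the operator and $P_N$ is its spectral projector. For $V$ itself, I would state the lemma for the $\rho_{-\Delta+V^r}$ version and pass to $\rho_{-\Delta+V}$ by the absolute continuity of subsection \ref{subsection:proba}.) Thus $\rho_N(E_N\cap U)=\mathbb{P}(\phi^N\in U)$. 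By the hypercontractivity bound, $\|\phi-\phi^N\|_{H^s}\to0$ almost surely for $s<0$; indeed the tail $\mathbb{E}\|(1-P_N)\phi\|_{H^s}^2\lesssim N^{2s}$ is summable over dyadic $N$.

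\textbf{Step 2 (lower bound).} Since $U$ is open, $\phi(\omega)\in U$ implies $\phi^N(\omega)\in U$ for all large $N$. Hence $\mathbb 1_U(\phi)\le\liminf_N\mathbb 1_U(\phi^N)$ almost surely, and Fatou gives $\rho(U)\le\liminf_N\rho_N(E_N\cap U)$.

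\textbf{Step 3 (upper bound; main obstacle).} For any $\varepsilon>0$ let $U_\varepsilon$ be the open $\varepsilon$-neighbourhood of $U$. Then
\[
\mathbb{P}(\phi^N\in U)\le \rho(U_\varepsilon)+\mathbb{P}\bigl(\|(1-P_N)\phi\|_{H^s}\ge\varepsilon\bigr),
\]
which gives $\limsup_N\rho_N(E_N\cap U)\le\rho(\overline U)$. To close the gap I need $\rho(\partial U)=0$, or at least that $\mathbb P(\phi^N\in U,\ \phi\in\partial U)\to0$. This is the step I expect to be the main difficulty, since it is not automatic for an arbitrary open set.

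My first attempt would exploit independence. The variable $\phi^N$ is independent of $(1-P_N)\phi$, and the conditional law of $(1-P_N)\phi$ is a nondegenerate Gaussian in every direction of $E_N^\perp$. So for fixed $\phi^N\in U$, the event $\{\phi^N+(1-P_N)\phi\in\partial U\}$ should be controlled by the measure of the section $(\partial U-\phi^N)\cap E_N^\perp$. I would try to show this section is null via Cameron--Martin quasi-invariance along the $E_N$ directions, combined with the fact that $\partial U$ has empty interior. If this fails in full generality, I would instead prove the identity for all open $U$ whose boundary is $\rho$-null. Such sets form a base of the topology (balls $B(\psi,r)$ for all but countably many $r$), which is all that is used in the pullback argument below.

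\textbf{Step 4 ($\mu$).} Write $\mu_N=F_N\,\rho_N$ with $F_N=\exp(-H^N(\phi^N))$. By Proposition \ref{prop:integrabilidadmu}, the $F_N(\phi^N)$ are bounded in every $L^p(d\mathbb P)$, $p<\infty$. The argument in its proof shows that $H^N(\phi^N)$ is Cauchy in $L^2(\omega)$, so $F_N(\phi^N)\to F(\phi)$ in probability, and hence in $L^1$ by uniform integrability. Therefore
\[
|\mu_N(E_N\cap U)-\mu(U)|\le\|F_N(\phi^N)-F(\phi)\|_{L^1}+\mathbb E\bigl[F(\phi)\,|\mathbb 1_U(\phi^N)-\mathbb 1_U(\phi)|\bigr].
\]
The first term tends to $0$. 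The second tends to $0$ by dominated convergence, using Steps 2--3 ($\mathbb 1_U(\phi^N)\to\mathbb 1_U(\phi)$ almost surely off $\{\phi\in\partial U\}$) and $F\in L^2$.
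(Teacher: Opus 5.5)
Your Steps 1--2, the first half of Step 3, and the $L^1$ convergence of the densities in Step 4 are the standard argument; the paper itself only points to \cite{Z92}. The gap is the second half of Step 3, and it cannot be closed, because the equality is false for general open $U$. No combination of Cameron--Martin quasi-invariance and ``$\partial U$ has empty interior'' can give $\rho(\partial U)=0$.

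Here is a counterexample. Since $\rho$ is a Borel probability measure on the Polish space $H^s$, it is inner regular by compact sets. Moreover $\bigcup_N E_N$ is $\rho$-null, because for $|m|>N$ the coordinate $\langle\phi,e_m^r\rangle$ is a nondegenerate Gaussian. So there is a compact $K\subset H^s\setminus\bigcup_N E_N$ with $\rho(K)>0$. Set $U:=H^s\setminus K$. Then:
\begin{itemize}
\item $U$ is open and $\partial U=K$ has empty interior;
\item $E_N\cap U=E_N$, so $\rho_N(E_N\cap U)=1$ for all $N$, while $\rho(U)=1-\rho(K)<1$;
\item in your notation, $\mathbb{P}(\phi^N\in U,\ \phi\in\partial U)=\rho(K)$ for every $N$, so this quantity never tends to $0$.
\end{itemize}
The same $U$ breaks the $\mu$ statement. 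Here $\mu_N(E_N\cap U)$ is the total mass of $\mu_N$, which by your Step 4 converges to the total mass of $\mu$. That is strictly larger than $\mu(U)$, since $\mu(K)>0$ (as $F>0$ almost surely).

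What your argument does establish is the one-sided version. For open $U$ you get $\rho(U)\le\liminf_N\rho_N(E_N\cap U)$ (Step 2). Equivalently, $\limsup_N\rho_N(E_N\cap C)\le\rho(C)$ for closed $C$, which is your $\varepsilon$-neighbourhood bound. The same holds for $\mu$: write $\mu_N(E_N\cap U)=\mathbb{E}[F_N(\phi^N)\mathbbm{1}_U(\phi^N)]$, replace $F_N(\phi^N)$ by $F(\phi)$ at an $o(1)$ cost using your $L^1$ convergence, and apply Fatou. Equality holds in addition whenever $\rho(\partial U)=0$.

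This one-sided form is what the standard references prove, and it is what the globalization step needs. Passing from $\mu_M((\Oo^M)^c)\le\sigma$ to $\mu(\Oo^c)\le\sigma$ uses the closed-set inequality for a closed set of good data, or the open-set inequality for the bad set. That set is not a ball, so restricting to a base of continuity sets is not the right repair. You should prove the two inequalities and state explicitly that the lemma's equality has to be weakened to them.
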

This follows from the hypercontractivity estimates on the gaussian measure and the comparability of $\rho_N^{-\Delta+V}$ and $\rho_N^{-\Delta+V^r}$. For a detailed proof \cite{Z92}. Now we can let $M\rightarrow \infty$ and using triangle inequality we get for $\mu(\Oo^c)\le \sigma$, $\omega\in \Oo, \psi=\Gg_{-\Delta+V}(\omega) $:
\begin{equation}\label{eq:ultima}
\|((\Phi^{N_1}_{t}-e^{it(-\Delta+V)})P_{N_1})-(\Phi^{N_2}_{t}-e^{it(-\Delta+V)})P_{N_2}))\psi\|_{H^s}\leq C(\sigma) (N_1 \wedge N_2)^{-\alpha}.
\end{equation}
This last equation gives us by completeness that $(\Phi^{N}-e^{it(-\Delta+V)})\psi$ converges in $H^s$ and thus $\Phi^{N}\psi$ converges weakly to some $\UU(t)=\Phi^\infty_t\psi$. And using again $(\ref{eq:ultima})$ we get:
\[\|\UU(t)-\Phi^{N}_{t}P_{N}\psi\|_{H^s}\leq C(\sigma)N^{-\alpha}\]
This coupled with the almost sure convergence in $\omega$ of $c_N(\omega)\rightarrow c_\infty(\omega)$ grants us the weak convergence for almost al time in $N$ of:
\[e^{2ic_\infty(\omega) t}\UU(t)-e^{2ic_N(\omega) t}\Phi^{N}_{t}P_{N}\psi_\omega\]
for $\omega\in\Omega$.
\end{proof}
\printbibliography
\end{document}